\numberwithin{equation}{section}
\newtheorem{theorem}{Theorem}[section]
\newtheorem{lemma}{Lemma}[section]
\newtheorem{proposition}{Proposition}[section]
\def\R{\mathbb{R}}
\def\N{\mathbb{N}}
\def\supp{\mathop{\rm supp}\nolimits}
\begin{document}

\title[Dispersive limit from the Kawahara to the KdV equation]{Dispersive limit from the Kawahara to the KdV equation}
\author{ Luc Molinet }
\address{ Universit\'e Fran\c cois Rabelais Tours, F\'ed\'eration Denis Poisson-CNRS,
 Parc Grandmont, 37200 Tours,  France}
 \email{luc.molinet@lmpt.univ-tours.fr}
\thanks{}
\author{Yuzhao Wang}
  \address{Department of Mathematics and Physics, North China Electric Power University, Beijing 102206, China}
  \email{wangyuzhao2008@gmail.com}
\subjclass[2000]{ 35Q53}
\keywords{KdV equation, Kawahara equation, dispersive limit. }

\begin{abstract}
We investigate the limit behavior of the solutions to the Kawahara equation
$$
u_t +u_{3x} +\varepsilon u_{5x} + u u_x =0 \quad , \, Ê \varepsilon>0
$$
as $ \varepsilon  \to 0 $. In this equation, the terms  $ u_{3x}Ê$ and $ \varepsilon u_{5x} $ do compete together and do cancel each other  at frequencies of order $ 1/\sqrt{\varepsilon} $. This prohibits the use of a standard dispersive approach for this problem. Nervertheless, by combining different dispersive approaches according to the range of spaces frequencies, we succeed in proving that the solutions to this equation converges in $ C([0,T];H^1(\R)) $ towards the solutions of the
 KdV equation for any fixed $ T>0$.
\end{abstract}

\maketitle

\section{Introduction and main results}
\subsection{Introduction}
In this paper we are interested in the limit behavior of the solutions to the Kawahara equation
$$
(K_\varepsilon) \hspace*{2cm} u_t +u_{3x} +\varepsilon u_{5x} + u u_x =0 , \quad (t,x)\in \R^2 , \; \varepsilon>0,
$$
as the positive coefficient $ \varepsilon\to 0 $. \\
Our goal is to prove  that they converge in a strong sense towards  the solutions of the KdV equation
\begin{equation}\label{KdV}
u_t +u_{3x} + u u_x =0 , \quad (t,x)\in \R^2 .
\end{equation}
This study can be seen as   a peculiar case of the following  class of limit behavior problems :
 \begin{equation}\label{general}
  \partial_t u +\partial_x \Bigl( L_1-\varepsilon L_2\Bigr) u +N_1(u)+\varepsilon N_2(u)=0 \; ,
 \end{equation}
 where $ u \, :\, \R \to \R $,   $L_1 $ and  $L_2 $ are speudo-differential operators with Fourier symbols $ |\xi|^{\alpha_1}$  and $ |\xi|^{\alpha_2}$ with $ 0<\alpha_1<\alpha_2 $ and  $ N_1 $ and $ N_2 $ are polynomial functions that depends on   $u $, its derivatives and possibly on the image of $ u $ by some
  speudo-differential operator (as for instance the Hilbert transform)  .  Note that the dispersive limits from the Benjamin equation or  some higher-order BO equations derived in \cite{CGK} towards  the Benjamin-Ono equation enter this class.

In this class of limit behavior problems, the main difficulty comes from the fact that the dispersive terms $ \partial_x L_1 u  $ and $ \varepsilon \partial_x L_2 u  $ do compete together. As one can easily check, the derivatives of the associated phase function $ \phi(\xi)=\xi |\xi|^{\alpha_1} (1-\varepsilon |\xi|^{\alpha_2-\alpha_1}) $ does vanish at frequencies of order $\varepsilon^{-\frac{1}{\alpha_2-\alpha_1}}$. This will make classical dispersive estimates as Strichartz estimates, global Kato smoothing effect or maximal in time estimate, not uniform in $ \varepsilon$. Therefore it is not clear to get even boundedness uniformly in $ \varepsilon $ of the solutions to \eqref{general}  by classical dispersive resolution methods.

On the other hand, by using only energy estimates that do not take into account the dispersive terms, we can see immediately that the solutions to $(K_\varepsilon) $ will stay bounded in $ H^s(\R)$, uniformly in $ \varepsilon $,  providing we work in Sobolev spaces $ H^s(\R ) $ with index $ s>3/2$. Moreover, using for instance Bona-Smith argument, we could prove the convergence of the solution of
$(K_\varepsilon) $   to the ones of \eqref{KdV} in $ C([0,T] ; H^s(\R)) $ with $ T=T(\|u(0)\|_{H^s} $ and $ s>3/2 $. However this approach is far to be satisfactory since it does not use at all the dispersive effects. Moreover, the KdV and Kawahara equations are known to be well-posed in low indices Sobolev spaces (see for instance  \cite{Bourgain1993}, \cite{KPVJAMS96}, \cite{Kato} )  and one can ask wether such convergence result does hold in those spaces. In this work we make a first step in this direction by proving that this convergence result holds in $ H^s(\R) $ with $ s\ge 1 $. Note that $ H^1(\R ) $ is a natural space for this problem since it is the energy space for the KdV equation.  Our main  idea is to combine different dispersive method according to the area of frequencies we consider. More precisely, we will use a Bourgain's approach (cf. \cite{Bourgain1993}, \cite{Gi}) outside the area $ D_\varepsilon $ where the first derivative of the phase function $ \phi' $ does vanish whereas we will use  Koch-Tzvetkov approach (cf. \cite{KT1}) in $ D_\varepsilon $.
 Indeed, noticing that $ \phi'' $ does not vanish in this area, the Strichartz estimate are valid uniformly in $ \varepsilon $ on  $D_\varepsilon $ so that we can apply Koch-Tzvetkov approach. On the other hand, outside $ D_\varepsilon $ one can easily see that one has a strong resonance relation at least for the worst interactions, namely the high-low interactions. Indeed, assuming that $
|\xi_1|>>|\xi_2| $, by the mean-value theroem, it holds
$$
|\phi_\varepsilon(\xi_1+\xi_2)-\phi_\varepsilon(\xi_1)-\phi_\varepsilon(\xi_2)|
\sim |\phi_\varepsilon'(\xi_1) \xi_2 - \phi_\varepsilon(\xi_2) |\sim
|\phi_\varepsilon'(\xi_1) \xi_2| \sim |\xi^2(3-5\varepsilon \xi^2)
\xi_2| \gtrsim \xi^2 |\xi_2| ,
$$
where $ \xi=\xi_1+\xi_2 $ is the output frequency and $ \phi_\varepsilon(\xi)=\xi^3-\varepsilon \xi^5 $ is the phase function associated with the $ (K_\varepsilon) $. It is worth noticing that this  resonance relation is similar to the one of the  KdV equation that reads  $ (\xi_1+\xi_2)^3-(\xi_1)^3-(\xi_2)^3=3\xi\xi_1\xi_2 $.
 To rely on this strong resonance relation even when one of the input frequency belongs to $ D_\varepsilon $ we will make use of the fact that any $ H^1 $-solution to $(K_\varepsilon)$ must belong to some Bourgain's space with time regularity one.
  \subsection{Main results}
  \begin{theorem}\label{theo1}
Let $ s\ge 1 $, $ \varphi\in H^s(\R) $,  $ T>0 $ and $ \{\varepsilon_n\}_{n\in\N} $ be a decreasing sequence of real numbers converging to $ 0 $.  The sequence $ u_n\in C(\R;H^s(\R)) $ of solutions to  $(K_\varepsilon) $ emanating from $ \varphi $  satisfies
 \begin{equation}\label{to1}
u_{n} \to u \mbox{ in }C([0,T];H^{s}(\R))
\end{equation}
where $ u\in C(\R;H^{s}(\R)) $ is the unique solution to the KdV equation  \eqref{KdV}  emanating from $\varphi$.
 \end{theorem}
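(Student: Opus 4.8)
The plan is to establish uniform-in-$\varepsilon$ bounds for the solutions $u_n$ of $(K_{\varepsilon_n})$ in a suitable Bourgain-type space adapted to the phase $\phi_\varepsilon(\xi)=\xi^3-\varepsilon\xi^5$, then extract a convergent subsequence whose limit solves the KdV equation, and finally upgrade weak convergence to strong convergence in $C([0,T];H^s(\R))$. I would first set up the frequency decomposition $\R_\xi = D_\varepsilon \cup D_\varepsilon^c$, where $D_\varepsilon$ is a neighbourhood of the zeros $\xi \sim \pm 1/\sqrt{\varepsilon}$ of $\phi_\varepsilon'(\xi)=3\xi^2-5\varepsilon\xi^4$. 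On $D_\varepsilon^c$ the phase behaves essentially like $\xi^3$ (the multiplier $3-5\varepsilon\xi^2$ is bounded away from zero), so the standard KdV Bourgain estimates — the $L^4_{t,x}$ Strichartz estimate, the bilinear $X^{s,b}$ estimate of Kenig–Ponce–Vega, and the high-low resonance bound $|\phi_\varepsilon(\xi_1+\xi_2)-\phi_\varepsilon(\xi_1)-\phi_\varepsilon(\xi_2)|\gtrsim \xi^2|\xi_2|$ sketched in the introduction — go through with constants uniform in $\varepsilon$. On $D_\varepsilon$, where $\phi_\varepsilon'$ vanishes but $\phi_\varepsilon''=6\xi-20\varepsilon\xi^3$ does not, I would invoke a Koch–Tzvetkov-type argument: the local (in time) Strichartz estimates with a small loss are uniform in $\varepsilon$ because they depend only on the nondegeneracy of the second derivative of the phase, and the nonlinearity restricted to this frequency range is controlled by an energy-type estimate.

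The core of the argument is the uniform a priori estimate. Using the classical energy estimates that ignore dispersion, each $u_n$ stays bounded in $C(\R;H^s(\R))$ uniformly in $n$ for $s\ge 1$ (this uses the conservation laws of $(K_\varepsilon)$, in particular the Hamiltonian which controls the $H^1$ norm, or a standard commutator estimate for $1\le s<3/2$). Crucially, as flagged in the introduction, this also shows that $\partial_t u_n = -\partial_x^3 u_n - \varepsilon_n \partial_x^5 u_n - u_n\partial_x u_n$ is bounded, hence $u_n$ lies in a Bourgain space with \emph{time} regularity essentially one — i.e. $u_n \in X^{s-?,1}_\varepsilon$-type spaces — and this time regularity is what lets one exploit the strong resonance relation on $D_\varepsilon^c$ even when one input frequency sits inside $D_\varepsilon$: one trades the spatial degeneracy of $\phi_\varepsilon'$ there against the available modulation. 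Combining the Bourgain estimate on $D_\varepsilon^c$, the Strichartz/energy estimate on $D_\varepsilon$, and the $H^s$ energy bound, I would obtain that $\{u_n\}$ is bounded in $L^\infty_T H^s$ and equicontinuous in time with values in $H^{s-3}$, say, so by Aubin–Lions (using local compactness of $H^s \hookrightarrow H^{s'}_{loc}$) a subsequence converges in $C([0,T];H^{s'}_{loc})$ for $s'<s$.

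Next I would identify the limit. The only $\varepsilon$-dependent term is $\varepsilon_n\partial_x^5 u_n$, which tends to $0$ in $\mathcal D'$ since $\partial_x^5 u_n$ is bounded in, say, $H^{s-5}_{loc}$; passing to the limit in the equation (the nonlinear term passes to the limit by the local strong convergence) shows the limit $u$ is a distributional — hence, by the KdV well-posedness theory cited (\cite{Bourgain1993},\cite{KPVJAMS96}), the unique $C(\R;H^s)$ — solution of \eqref{KdV} with data $\varphi$. By uniqueness the whole sequence converges. The final step is to promote $C([0,T];H^{s'}_{loc})$-convergence to $C([0,T];H^s(\R))$-convergence. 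For this I would use the Bona–Smith technique: split $\varphi = \varphi^{\le N} + \varphi^{>N}$ into low and high frequencies, control the high-frequency contribution uniformly in $n$ by a tail estimate (using continuity of the $H^s$ norm along the flow, which follows from the conservation laws plus the uniform estimates), and handle the low-frequency, smoother piece by the already-established $H^{s'}_{loc}$ convergence together with a uniform bound in a higher $H^{s+1}$ norm to beat the localization.

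The main obstacle I anticipate is the uniform-in-$\varepsilon$ bilinear/multilinear estimate straddling the boundary of $D_\varepsilon$ — specifically, interactions where one frequency lies in $D_\varepsilon$ and the other does not, or where the output frequency crosses $\partial D_\varepsilon$. On $D_\varepsilon$ the modulation $\tau - \phi_\varepsilon(\xi)$ gives no dispersive smoothing in the usual $X^{s,b}$ sense because $\phi_\varepsilon'$ degenerates, so one cannot simply run the KdV bilinear estimate there; instead one must cleverly combine the Koch–Tzvetkov Strichartz bound (which costs a small derivative loss, dangerous at $s=1$) with the a priori \emph{time}-regularity-one information on the factors, and check that the loss is absorbable. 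Making this patching quantitative, uniform in $\varepsilon$, and compatible with the endpoint regularity $s=1$ is the technical heart of the proof.
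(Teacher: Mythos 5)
Your overall architecture for the uniform estimates (Bourgain-type analysis away from the stationary point of $\phi_\varepsilon'$, Koch--Tzvetkov Strichartz analysis near it, and the use of time-regularity-one information to exploit the resonance relation) matches the paper, and you correctly identify the cross-regime bilinear estimate as the technical heart. However, there is a genuine error in the foundation of your argument: you claim that ``classical energy estimates that ignore dispersion'' give the uniform-in-$\varepsilon$ bound in $C([0,T];H^s)$ for $s\ge 1$, via the Hamiltonian for $s=1$ or a commutator estimate for $1\le s<3/2$. Neither mechanism works. The conserved Hamiltonian of $(K_\varepsilon)$ is $\int\bigl(\tfrac12 u_x^2-\tfrac{\varepsilon}{2}u_{xx}^2-\tfrac16 u^3\bigr)$; the two quadratic terms enter with \emph{opposite} signs, so the functional is indefinite and gives no upper bound on $\|u_x\|_{L^2}$ uniformly in $\varepsilon$. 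The pure energy/commutator method closes only for $s>3/2$, where $\|u_x\|_{L^\infty}$ is controlled by Sobolev embedding --- this is stated explicitly in the introduction of the paper and is precisely why the dispersive machinery is needed. The uniform $L^\infty_T H^s$ bound for $s\ge 1$ is an \emph{output} of the combined Propositions~\ref{pro1} and~\ref{propo2} plus a continuity argument, not an input available in advance. A second, related gap: your high-frequency ``tail estimate'' for the Bona--Smith upgrade is justified by ``continuity of the $H^s$ norm along the flow, which follows from the conservation laws'' --- again there are no usable conservation laws here; what is actually required is the uniform-in-$\varepsilon$ $L^2$-Lipschitz bound for $H^1$-solutions (Proposition~\ref{proplip}), whose proof repeats the entire two-regime analysis at the level of the difference equation and is the most delicate part of the paper.

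Setting these errors aside, your route from the uniform bounds to Theorem~\ref{theo1} is genuinely different from the paper's. You propose compactness (Aubin--Lions) to extract a limit in $C([0,T];H^{s'}_{loc})$, identification of the limit as a distributional solution of KdV, and then uniqueness; this requires an unconditional uniqueness statement for KdV in $L^\infty_T H^s$ (available for $s\ge 1$, but an extra ingredient you should cite), and still leaves the upgrade to global-in-space $H^s$ convergence to Bona--Smith. The paper instead never passes through weak limits: it writes $S_{K_\varepsilon}(\varphi)-S_{KdV}(\varphi)$ as a telescoping sum over the regularized data $P_{\le N}\varphi$, controls the two tail terms by the uniform equicontinuity \eqref{to4} and by continuous dependence for KdV, and compares $S_{K_\varepsilon}(P_{\le N}\varphi)$ with $S_{KdV}(P_{\le N}\varphi)$ directly by a Gronwall/commutator estimate treating $\varepsilon v_{5x}$ as a forcing term of size $O(\varepsilon)\,C(N)$. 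This quantitative comparison is both simpler and avoids any appeal to uniqueness of weak solutions; I would recommend adopting it once the uniform well-posedness statement is in hand.
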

 Theorem 1 is actually a direct consequence of the fact that the Cauchy problem associated with $(K_\varepsilon) $ is well-posed in
 $ H^s(\R) $, $ s\ge 1$,  uniformly in $  \varepsilon\in ]0,1[ $ in the following sense
 \begin{theorem}\label{theo2}
Let $ s\ge 1 $ and  $ \varphi\in H^s(\R) $. There exists $ T=T(\|\varphi\|_{H^1})\in]0,1[ $ and  $ C>0 $ such that for any $  \varepsilon\in ]0,1[ $ the solution $ u_\varepsilon \in C(\R;H^1(\R)) $ to $(K_\varepsilon) $ satisfies
 \begin{equation}\label{to3}
 \sup_{t\in [0,T]} \| u_\varepsilon (t)\|_{H^s} \le C \|\varphi\|_{H^s}
\end{equation}
Moreover, for any $ R>0 $, the family of  solution-maps $S_{K_\varepsilon}  \, :\,  \varphi \mapsto u_\varepsilon $, $ \varepsilon\in ]0,1[ $, from $ B(0,R)_{H^s}  $ into $  C([0,T(R)]; H^s(\R)) $ is equi-continuous, i.e. for any sequence $ \{\varphi_n\} \subset   B(0,R)_{H^s}  $ converging to $ \varphi $ in $ H^s(\R) $ it holds \begin{equation}\label{to4}
\lim_{n\to 0}  \sup_{\varepsilon\in ]0,1[}\|S_{K_\varepsilon} \varphi - S_{K_\varepsilon} \varphi_n\|_{L^\infty(0,T(R); H^s(\R))}=0 \; .
 \end{equation}
  \end{theorem}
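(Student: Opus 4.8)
The plan is to combine an a priori energy estimate with a Bourgain-space analysis, splitting the frequency space according to whether or not $ |\xi| $ lies in the critical region $ D_\varepsilon = \{ |\xi| \sim \varepsilon^{-1/2} \} $ where $ \phi_\varepsilon' $ vanishes. First I would fix $ s\ge 1 $ and run the classical energy method on $ (K_\varepsilon) $: multiplying the equation by $ u_\varepsilon $ and by $ \partial_x^{2s} u_\varepsilon $ and integrating, the fifth-order term $ \varepsilon u_{5x} $ contributes nothing (it is skew-adjoint, just as $ u_{3x} $), so one gets $ \frac{d}{dt}\|u_\varepsilon(t)\|_{H^s}^2 \lesssim \|\partial_x u_\varepsilon(t)\|_{L^\infty} \|u_\varepsilon(t)\|_{H^s}^2 $. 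This already yields a local bound, but with a time of existence depending on $ \|u_\varepsilon\|_{H^s} $ with $ s > 3/2 $; the whole point is to replace $ \|\partial_x u_\varepsilon\|_{L^\infty} $ by something controlled by $ \|u_\varepsilon\|_{H^1} $ alone, uniformly in $ \varepsilon $, which forces the use of dispersive smoothing.

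Next I would set up the two dispersive mechanisms. Outside $ D_\varepsilon $ one has the strong resonance relation $ |\phi_\varepsilon(\xi_1+\xi_2)-\phi_\varepsilon(\xi_1)-\phi_\varepsilon(\xi_2)| \gtrsim \xi^2|\xi_2| $ for high-low interactions (as displayed in the introduction), which is exactly of KdV type; so on this region I would work in the Bourgain space $ X^{s,b}_\varepsilon $ adapted to $ \phi_\varepsilon $ and reprove, uniformly in $ \varepsilon $, the bilinear estimate $ \|\partial_x(uv)\|_{X^{s,-1/2}_\varepsilon} \lesssim \|u\|_{X^{s,1/2}_\varepsilon}\|v\|_{X^{s,1/2}_\varepsilon} $ of Kenig–Ponce–Vega type, the constant being uniform because it depends only on the lower bound of the modulation, not on the precise shape of $ \phi_\varepsilon $. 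For the low output frequencies or the piece of $ u_\varepsilon $ frequency-localized in $ D_\varepsilon $, where the resonance identity fails, I would instead use that $ \phi_\varepsilon'' $ does not vanish on $ D_\varepsilon $, so the oscillatory-integral/van der Corput argument of Koch–Tzvetkov gives a Strichartz estimate $ \|e^{-t\partial_x\phi_\varepsilon(D)}P_{D_\varepsilon}\varphi\|_{L^4_{t}L^\infty_x} \lesssim \|\varphi\|_{L^2} $ on a short time interval, again with an $ \varepsilon $-independent constant. To couple the two pieces in $ D_\varepsilon $ with the Bourgain norm outside, I would use the observation flagged in the introduction: every $ H^1 $ solution of $ (K_\varepsilon) $ lies, on a time interval depending only on $ \|u_\varepsilon\|_{H^1} $, in a Bourgain space with time-regularity $ b=1 $ (this follows by rewriting Duhamel and using the energy bound on the nonlinearity), which lets one recover enough modulation gain to absorb the $ D_\varepsilon $-frequency inputs even when the resonance relation is weak.

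With these ingredients I would close the estimate: write $ u_\varepsilon = v + w $ with $ w $ the part of the Duhamel term carrying $ D_\varepsilon $-output or input frequencies and $ v $ the rest; bound $ v $ in $ X^{1,1/2}_\varepsilon $ by the uniform bilinear estimate, bound $ w $ in $ L^\infty_T H^1 \cap L^4_T W^{1,\infty} $ by the uniform Koch–Tzvetkov Strichartz estimate, and feed the resulting control of $ \|\partial_x u_\varepsilon\|_{L^1_T L^\infty_x} $ back into the energy inequality to get \eqref{to3} with $ T = T(\|\varphi\|_{H^1}) $ independent of $ \varepsilon $ and of $ s $. The equicontinuity statement \eqref{to4} then follows by running exactly the same analysis on the difference $ u_\varepsilon - \tilde u_\varepsilon $ of two solutions, where the nonlinearity $ u\partial_x u - \tilde u\partial_x\tilde u = \partial_x\!\big(\tfrac12(u+\tilde u)(u-\tilde u)\big) $ is again quadratic, so the same bilinear and Strichartz estimates apply and give a bound of the difference in $ L^\infty_T H^s $ by $ C\|\varphi-\varphi_n\|_{H^s} $ uniformly in $ \varepsilon $; since a single $ T(R) $ works for the whole ball $ B(0,R)_{H^s} $, letting $ n\to\infty $ yields \eqref{to4}. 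I expect the main obstacle to be the $ D_\varepsilon $ region: there the two dispersive approaches must be patched together, and making the Koch–Tzvetkov Strichartz constant genuinely uniform in $ \varepsilon $ — tracking that the lower bound on $ |\phi_\varepsilon''| $ on $ D_\varepsilon $ does not degenerate, and handling the high-high interactions that output into $ D_\varepsilon $ where \emph{neither} the KdV-type resonance nor a clean second-derivative bound is available — is the delicate technical heart of the argument.
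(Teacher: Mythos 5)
Your overall strategy for the a priori bound \eqref{to3} is essentially the paper's: Bourgain spaces adapted to $\phi_\varepsilon$ away from the zero of $\phi_\varepsilon'$, the Koch--Tzvetkov argument (time-slicing into intervals of length $\varepsilon^{1/2}$ plus the uniform Strichartz estimate coming from the non-vanishing of $\phi_\varepsilon''$ there) near it, and the observation that $C([0,T];H^1)$-solutions automatically lie in a Bourgain space with time regularity one, which glues the two regimes. There are, however, two genuine gaps. First, your claim that the Kenig--Ponce--Vega bilinear estimate holds in $X^{s,b}_\varepsilon$ uniformly in $\varepsilon$ ``because the constant depends only on the lower bound of the modulation'' is not correct as stated: the resonance function is $\xi\xi_1\xi_2\,[3-5\varepsilon(\xi^2-\xi_1\xi_2)]$, and the bracket degenerates not only when the output frequency is near $\sqrt{3/(5\varepsilon)}$ but also in balanced interactions whose output lies in an $O(\varepsilon^{-1/2})$-window; the paper must split into several regions (its cases 2.1 and 2.2.1 in the proof of \eqref{bilinear}) and, where the resonance is weak, fall back on the uniform $L^4_tL^\infty_x$ Strichartz estimate for $P_{B_\varepsilon}$ together with $L^\infty_TH^1$ norms. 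The ``delicate technical heart'' you flag at the end is therefore not a residual difficulty but the content of the bilinear estimate itself, and your sketch gives no mechanism for it.

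Second, and more seriously, your final step --- running ``exactly the same analysis on the difference'' to get $\|u_\varepsilon-\tilde u_\varepsilon\|_{L^\infty_TH^s}\le C\|\varphi-\varphi_n\|_{H^s}$ uniformly in $\varepsilon$ --- does not work, because the scheme is not a pure contraction: the $P_{\complement A_\varepsilon}$ part is controlled by an \emph{energy} estimate, and the energy estimate for the difference $w=u-\tilde u$ at regularity $H^s$ contains the term $\int J^s_x(w\,\partial_x u)\,J^s_x w$, which costs $\|u\|_{H^{s+1}}$ and hence loses a derivative. This is precisely why Theorem \ref{theo2} asserts only equicontinuity of the solution maps and not uniform Lipschitz continuity. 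The paper's actual route is the Bona--Smith argument: an $\varepsilon$-uniform Lipschitz bound at the $L^2$ level only (Proposition \ref{proplip}, proved in the appendix), combined with the regularized data $\varphi^N=P_{\le N}\varphi$, the triangle inequality \eqref{trtr}, and the compensation $\|u_\varepsilon-u^N_\varepsilon\|_{L^\infty_TL^2}\cdot\|u^N_\varepsilon\|_{H^2}=o(N^{-1})\cdot O(N)$. Finally, the continuity argument only closes when $\sqrt{\varepsilon}+T^{1/4}$ is small relative to $\|\varphi\|_{H^1}$, so you would still need the paper's dilation argument $u_\lambda=\lambda^{-2}u(\lambda^{-3}t,\lambda^{-1}x)$ with $\lambda=\varepsilon^{-1/2}$ to cover $\varepsilon\in[\varepsilon_0(\|\varphi\|_{H^1}),1[$.
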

 \subsection{Notation}
 For any positive numbers $a$ and $b$, the notation $a \lesssim b$ means that there exists a positive constant
$c$ such that $a \le c b$. We also denote $a \sim b$ when $a \lesssim b$ and $b \lesssim a$.
 Moreover, if $\alpha \in \mathbb R$, $\alpha_+$, respectively $\alpha_-$, will denote a number slightly greater, respectively lesser, than $\alpha$.

For $u=u(x,t) \in \mathcal{S}(\mathbb R^2)$,
$\mathcal{F}u=\widehat{u}$ will denote its space-time Fourier
transform, whereas $\mathcal{F}_xu=(u)^{\wedge_x}$, respectively
$\mathcal{F}_tu=(u)^{\wedge_t}$, will denote its Fourier transform
in space, respectively in time. For $s \in \mathbb R$, we define the
Bessel and Riesz potentials  $J^s_x$ and $D_x^s$, by
\begin{displaymath}
J^s_xu=\mathcal{F}^{-1}_x\big((1+|\xi|^2)^{\frac{s}{2}}
\mathcal{F}_xu\big) \quad \text{and} \quad
D^s_xu=\mathcal{F}^{-1}_x\big(|\xi|^s \mathcal{F}_xu\big).
\end{displaymath}

 We will  need a Littlewood-Paley analysis. Let $\psi\in C^\infty_0(\R)$ be an even function  such that $\psi\geq 0$, $\supp \psi\subset [-3/2,3/2]$, $\psi\equiv 1$ on $[-5/4,5/4]$. We set $\eta_0:=\psi $ and  for all $k\in \N^* $, $\eta_{2^k}(\xi):=\psi(2^{-k} \xi)-\psi(2^{-k+1} \xi)$,  $ \eta_{\le 2^k}:=\psi(2^{-k} \cdot)=\sum_{j=0}^k \eta_{2^j} $ and  $ \eta_{\ge 2^k}:=1-\psi(2^{k-1}\cdot)=1-\eta_{\le 2^{k-1}}$.  The Fourier multiplicator operators by $ \eta_{2^j} $, $ \eta_{\le 2^j} $  and $ \eta_{\ge 2^j} $  will be denoted respectively by $ P_{2^j} $,  $P_{\le 2^j}  $  and $P_{\ge 2^j}  $, i.e. for any $ u\in L^2(\R)$
  $$
 \widehat{P_{2^j} u}:= \eta_{2^j} \widehat{u} , \quad Ê \widehat{P_{\le 2^j} u}:=\eta_{\le 2^j} \hat{u}
 \quad \mbox{ and }Ê \widehat{P_{\ge 2^j} u}:=\eta_{\ge 2^j} \hat{u}
 \;  .
$$

Note that, to simplify the notations, any summations over capitalized variables such as $N$ are presumed to be dyadic with $N \ge 1 $, \textit{i.e.},
these variables range over numbers of the form $2^k $, $k \in \mathbb Z_{+}$. $P_+$ and $P_-$  will denote the projection on respectively the
positive and the negative Fourier frequencies.

Finally, we denote by $ U_\varepsilon(t):=e^{-t(\partial_x^3+\varepsilon \partial_x^5)} $ the free evolution associated with the linear part of $ (K_\varepsilon) $.
\subsection{Function spaces} \label{spaces}
For $1 \le p \le \infty$, $L^p(\mathbb R)$ is the usual Lebesgue
space with the norm $\|\cdot\|_{L^p}$, and for $s \in \mathbb R$ ,
the real-valued Sobolev spaces $H^s(\mathbb R)$  denote the spaces of all real-valued functions with the usual
norms
\begin{displaymath}
\|\varphi\|_{H^s}=\|J^s_x\varphi\|_{L^2} \; .
\end{displaymath}
If $f=f(x,t)$ is a function defined for $x \in
\mathbb R$ and $t$ in the time interval $[0,T]$, with $T>0$, if $B$
is one of the spaces defined above, $1 \le p \le \infty$ and $1 \le q \le \infty$, we will
define the mixed space-time spaces $L^p_TB_x$,
$L^p_tB_x$, $L^q_xL^p_T$ by the norms
\begin{displaymath}
\|f\|_{L^p_TB_x} =\Big(
\int_0^T\|f(\cdot,t)\|_{B}^pdt\Big)^{\frac1p} \quad , \quad
\|f\|_{L^p_tB_x} =\Big( \int_{\mathbb R}\|f(\cdot,t)\|_{B}^pdt\Big)^{\frac1p},
\end{displaymath}
and
\begin{displaymath}
\|f\|_{L^q_xL^p_T}= \left(\int_{\mathbb R}\Big( \int_0^T|f(x,t)|^pdt\Big)^{\frac{q}{p}}dx\right)^{\frac1q}.
\end{displaymath}
For $s$, $b \in \mathbb R$, we introduce the Bourgain spaces
$X^{s,b}_{\epsilon}$ related to the linear part of $(K_\varepsilon) $  as
the completion of the Schwartz space $\mathcal{S}(\mathbb R^2)$
under the norm
\begin{equation} \label{Bourgain}
\|v\|_{X^{s,b}_{\epsilon}} := \left(
\int_{\mathbb{R}^2}\langle\tau-\phi_\varepsilon(\xi)\rangle^{2b}\langle \xi\rangle^{2s}|\widehat{v}(\xi, \tau)|^2
d\xi d\tau \right)^{\frac12},
\end{equation}
where $\langle x\rangle:=1+|x|$. We will also use a dyadic version
of those spaces introduced in \cite{Tat} in the context of wave maps.
For $s$, $b \in \mathbb R$, $1 \le q \le
\infty$, $X^{s,b,q}_{\epsilon}$ will denote the completion of the
Schwartz space $\mathcal{S}(\mathbb R^2)$ under the norm
\begin{equation} \label{Bourgain2}
\|v\|_{X^{s,b,q}_{\epsilon}} := \left( \sum_{k\ge 0}\Big(\sum_{j\ge 0}\langle 2^k
\rangle^{sq}\langle
2^j \rangle^{bq}\|P_{2^k}(\xi) P_{2^j}(\tau-\phi_\varepsilon(\xi))\widehat{v}(\xi,\tau)\|_{L^2_{\tau,\xi}}^q\Big)^{\frac2q}
\right)^{\frac12}.
\end{equation}
Moreover, we define a localized (in time) version of these spaces.
Let $T>0$ be a positive time and $Y=X^{s,b}_{\epsilon}$ or
$Y=X^{s,b,q}_{\epsilon}$. Then, if $v: \mathbb R \times
]0,T[\rightarrow \mathbb R$, we have that
\begin{displaymath}
\|v\|_{Y_{T}}:=\inf \{\|\tilde{v}\|_{Y} \ | \ \tilde{v}: \mathbb R
\times \mathbb R \rightarrow \mathbb C, \ \tilde{v}|_{\mathbb R
\times ]0,T[} = v\}.
\end{displaymath}

\section{Uniform estimates far from the stationary point of the phase function}\label{section2}
 As we explained in the introduction, it is crucial that the first and the second derivatives of the phase function $ \phi_\varepsilon(\xi)=\xi^3-\varepsilon \xi^5 $ do not cancel
  exactly at the same point. Indeed, $ \phi_\varepsilon'(\xi)=0 \Leftrightarrow |\xi|=\sqrt{\frac{3}{5 \varepsilon}} $ while , $ \phi_\varepsilon''(\xi)=0 \Leftrightarrow |\xi|=\sqrt{\frac{3}{10 \varepsilon}} $. Consequently,
  we introduce the following smooth Fourier projectors
  $$
  \widehat{P_{A_\varepsilon} f} = \Bigl[ 1 - \eta_0\Bigl[20 \sqrt{\varepsilon} \Bigl(|\xi|-\sqrt{\frac{3}{5\varepsilon}}\Bigr) \Bigr] \Bigr]\widehat{f}
  $$
  and
  $$
  \widehat{P_{B_\varepsilon} f} = \Bigl[ 1 - \eta_0\Bigl[20 \sqrt{\varepsilon} \Bigl(|\xi|-\sqrt{\frac{3}{10\varepsilon}}\Bigr) \Bigr] \Bigr]\widehat{f}
  $$
  Clearly, $  \widehat{P_{A_\varepsilon} f}  $ cancels in a region of order $ \varepsilon^{-1/2}Ê$ around $\sqrt{\frac{3}{5 \varepsilon}}$ whereas
   $  \widehat{P_{B_\varepsilon} f}  $ cancels in a region of order $ \varepsilon^{-1/2}Ê$ around $\sqrt{\frac{3}{10 \varepsilon}}$.
  We are now in position to state the main proposition of this section :
  \begin{proposition}\label{pro1}
  Let  $ s\ge 1 $, $ 0<T<1 $ and $ u_{i,\varepsilon} \in C([0,T]; H^s(\R)) $, $i=1,2$,  be two solutions to $ (K_\varepsilon) $ with $ 0<\varepsilon<\! <1 $ and initial data $ \varphi_i $. Then it holds
  \begin{equation}\label{est1pro1}
  \| P_{A_\varepsilon} u_{i,\varepsilon} \|_{X^{s,1/2,1}_{\varepsilon,T} }\lesssim  \|\varphi_i\|_{H^s}  +T^{1/4}\| u_{i,\varepsilon}\|_{Y_{\varepsilon,T}^s} \| u_{i,\varepsilon}\|_{Y_{\varepsilon,T}^1}
  (1+ \| u_{i,\varepsilon}\|_{Y_{\varepsilon,T}^1})
  \end{equation}
  and, setting $ w=u_{1,\varepsilon} - u_{2,\varepsilon}$,
   \begin{equation}\label{est2pro1}
  \| P_{A_\varepsilon}w \|_{X^{s,1/2,1}_{\varepsilon,T} }\lesssim  \|\varphi_1-\varphi_2\|_{H^s}  +T^{1/4}\| w\|_{Y_{\varepsilon,T}^s}
  \sum_{i=1}^2\| u_{i,\varepsilon}\|_{Y_{\varepsilon,T}^s}
  (1+ \| u_{i,\varepsilon}\|_{Y_{\varepsilon,T}^s})
  \end{equation}

  where
  \begin{equation}\label{defY}
 \| u\|_{Y_{\varepsilon,T}^s}:=  \| P_{A_\varepsilon} u \|_{X^{s,1/2,1}_{\varepsilon,T}} +\|u\|_{L^\infty_T H^s}
  \end{equation}
  \end{proposition}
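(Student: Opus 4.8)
The plan is to run the standard Bourgain-space estimate for the KdV-type nonlinearity $uu_x = \tfrac12(u^2)_x$, but localized through the projector $P_{A_\varepsilon}$, using the crucial feature that on the support of $\eta_{A_\varepsilon}$ we stay away from the zero set of $\phi_\varepsilon'$, so that the high–low resonance relation $|\phi_\varepsilon(\xi_1+\xi_2)-\phi_\varepsilon(\xi_1)-\phi_\varepsilon(\xi_2)|\gtrsim \xi^2|\xi_2|$ recalled in the introduction holds uniformly in $\varepsilon$. First I would write $u_{i,\varepsilon}$ on $[0,T]$ via the Duhamel formula $u_{i,\varepsilon}(t)=U_\varepsilon(t)\varphi_i - \tfrac12\int_0^t U_\varepsilon(t-t')\partial_x(u_{i,\varepsilon}^2)(t')\,dt'$, apply $P_{A_\varepsilon}$, and extend in time using a smooth cutoff so as to pass to the global spaces. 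The linear term $P_{A_\varepsilon}U_\varepsilon(t)\varphi_i$ is estimated by the standard homogeneous estimate $\|U_\varepsilon(\cdot)\varphi\|_{X^{s,1/2,1}_\varepsilon}\lesssim\|\varphi\|_{H^s}$ (which is independent of $\varepsilon$ since it only uses the definition of $X^{s,b,q}_\varepsilon$), giving the first term on the right-hand side of \re{est1pro1}. For the Duhamel term I would use the retarded estimate in $X^{s,b,1}_\varepsilon$ with $b=1/2$, reducing matters to a bilinear estimate
\begin{equation}\label{bilin-plan}
\|P_{A_\varepsilon}\partial_x(uv)\|_{X^{s,-1/2,1}_\varepsilon}\lesssim T^{1/4}\,\|u\|_{Y^s_{\varepsilon,T}}\|v\|_{Y^1_{\varepsilon,T}} + (\text{sym.}),
\end{equation}
where the $T^{1/4}$ gain comes from inserting a time-localization factor and using that $b=1/2$ is an endpoint, so a small positive power of $T$ can be extracted by a standard interpolation/Sobolev-in-time argument (cf. the $X^{s,b,1}$ machinery). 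The extra factor $(1+\|u_{i,\varepsilon}\|_{Y^1_{\varepsilon,T}})$ and the mixed $s/1$ structure of the right-hand side come from splitting, in the bilinear estimate, according to which input carries the high frequency: when the output and one input are high, that input is measured in $H^s$ (or $X^{s,1/2,1}_\varepsilon$) and the low one in $H^1$-type norms.

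The heart of the matter is \re{bilin-plan}. I would run a Littlewood–Paley decomposition $u=\sum_N P_N u$, $v=\sum_{N'}P_{N'}v$ and split into the three regimes high-high, high-low, low-high. In every regime where the output frequency $N_{\max}$ is $\gtrsim 1$ and one input is much smaller, the resonance identity gives, on $\supp\eta_{A_\varepsilon}$, a modulation lower bound: at least one of $\langle\tau-\phi_\varepsilon(\xi)\rangle$, $\langle\tau_1-\phi_\varepsilon(\xi_1)\rangle$, $\langle\tau_2-\phi_\varepsilon(\xi_2)\rangle$ is $\gtrsim \xi^2|\xi_{\min}|$, so that the derivative loss $|\xi|\lesssim N_{\max}$ is absorbed by the $\langle\tau-\phi\rangle^{1/2}$ weights exactly as in the KdV analysis of Bourgain/Kenig–Ponce–Vega. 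One then closes using the $L^4_{t,x}$ Strichartz-type estimate for $X^{0,1/2+}_\varepsilon$ functions, or directly the elementary $L^2$-convolution / Cauchy–Schwarz-in-modulation bounds that are standard for KdV at $s\ge -3/4$; here $s\ge 1$ makes everything comfortable. The genuinely delicate case is the one I must be most careful about: the output frequency $\xi$ (or an input) lying \emph{inside} $D_\varepsilon$, i.e. near $\sqrt{3/(5\varepsilon)}$, where $\phi_\varepsilon'$ is small — but $P_{A_\varepsilon}$ is precisely designed to excise a neighborhood of size $\sim\varepsilon^{-1/2}$ around that point, so the factor $(3-5\varepsilon\xi^2)$ in $\phi_\varepsilon'(\xi)=\xi^2(3-5\varepsilon\xi^2)$ is bounded below by $\sim\sqrt\varepsilon\,|\xi|$ there; combined with $|\xi|\gtrsim\varepsilon^{-1/2}$ on that part of the support this still yields $|\phi_\varepsilon'(\xi)|\gtrsim 1$ and in fact $\gtrsim \xi^2\cdot(\text{something}\gtrsim 1)$, keeping the resonance bound $\gtrsim\xi^2|\xi_{\min}|$ alive uniformly in $\varepsilon$. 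I would verify this lower bound as a short separate lemma before entering the case analysis.

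For the difference estimate \re{est2pro1}, $w=u_{1,\varepsilon}-u_{2,\varepsilon}$ solves the same Duhamel formula with nonlinearity $\tfrac12\partial_x\big(w(u_{1,\varepsilon}+u_{2,\varepsilon})\big)$ and data $\varphi_1-\varphi_2$, so the same bilinear estimate \re{bilin-plan} applied with one factor equal to $w$ and the other to $u_{1,\varepsilon}+u_{2,\varepsilon}$ gives \re{est2pro1} verbatim, after using $\|u_{1,\varepsilon}+u_{2,\varepsilon}\|\le\sum_i\|u_{i,\varepsilon}\|$ and keeping the same high/low bookkeeping to produce the $\|w\|_{Y^s_{\varepsilon,T}}\sum_i\|u_{i,\varepsilon}\|_{Y^s_{\varepsilon,T}}(1+\|u_{i,\varepsilon}\|_{Y^s_{\varepsilon,T}})$ structure. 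I expect no new difficulty here beyond the linearity of the argument. The main obstacle throughout is thus purely the bilinear estimate \re{bilin-plan}, and within it the bookkeeping that guarantees the resonance lower bound is genuinely uniform in $\varepsilon$ on $\supp\eta_{A_\varepsilon}$ — everything else is the well-understood KdV $X^{s,b}$ calculus, made only easier by the high regularity $s\ge 1$ and the endpoint summability index $q=1$ in $X^{s,1/2,1}_\varepsilon$.
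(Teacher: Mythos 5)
Your overall architecture (Duhamel plus a bilinear estimate in $X^{s,-1/2,1}_\varepsilon$, with the high/low bookkeeping producing the $Y^s\cdot Y^1$ structure, and the difference estimate obtained by the same bilinear bound applied to $w(u_{1,\varepsilon}+u_{2,\varepsilon})$) matches the paper. But there is a genuine gap at what you yourself identify as the heart of the matter. The resonance function is $\Theta=\xi\xi_1\xi_2\bigl[3-5\varepsilon(\xi^2-\xi_1\xi_2)\bigr]$, and the bracket depends on $\xi_1\xi_2$, not only on the output $\xi$. Your lower bound $|\Theta|\gtrsim\xi^2|\xi_{\min}|$ is the mean--value--theorem heuristic valid only for high--low interactions; it is exactly the paper's case $N_1<2^{-10}N_2$, where $\xi^2-\xi_1\xi_2\approx\xi^2$ and $P_{A_\varepsilon}$ on the output keeps $\xi^2$ away from $3/(5\varepsilon)$. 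In the balanced high--high regime with $\xi_1\approx-\xi_2$ both large, one has $\xi^2-\xi_1\xi_2\approx|\xi_1|^2$, which can equal $3/(5\varepsilon)$ no matter where the output $\xi$ sits in $\supp\eta_{A_\varepsilon}$: the resonance vanishes, there is no modulation gain, and Cauchy--Schwarz in modulation cannot close. Your fallback, the $L^4_{t,x}$ Bourgain--space Strichartz estimate, is precisely one of the estimates that fails to be uniform in $\varepsilon$ here, since it rests on $|\phi_\varepsilon''(\xi)|\gtrsim|\xi|$ and $\phi_\varepsilon''$ vanishes at $|\xi|=\sqrt{3/(10\varepsilon)}$, a frequency which lies inside $\supp\eta_{A_\varepsilon}$. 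The paper's way out is a second projector $P_{B_\varepsilon}$ excising a neighborhood of $\sqrt{3/(10\varepsilon)}$, for which the $L^4_tL^\infty_x$ Strichartz estimate \eqref{linear2} is uniform in $\varepsilon$; the non-resonant high--high regions (cases 2.1 and 2.2.1 of the proof of \eqref{bilinear}) are then closed by duality against $L^{4/3+}_tL^{1+}_x$ and an $L^\infty_TH^1$ factor, which is also where the factor $T^{1/4}$ actually comes from. Your ``delicate case'' (output near $\sqrt{3/(5\varepsilon)}$) is a non-issue, since $P_{A_\varepsilon}$ removes it; the delicacy is in the inputs, which carry no $A_\varepsilon$-localization at all.

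Two further points you would need to supply. First, the inputs are controlled in $X^{s,1/2,1}_\varepsilon$ only after applying $P_{A_\varepsilon}$, so the standard bilinear calculus cannot be quoted verbatim: the paper decomposes $\eta_{A_\varepsilon}(\xi)=\eta_{A_\varepsilon}(\xi-\xi_1)+(\eta_{A_\varepsilon}(\xi)-\eta_{A_\varepsilon}(\xi-\xi_1))$ (Lemma \ref{lemme1}, exploiting $|\eta_{A_\varepsilon}(\xi)-\eta_{A_\varepsilon}(\xi-\xi_1)|\lesssim\min(1,\sqrt{\varepsilon}|\xi_1|)$ together with the Kato smoothing and maximal--function estimates \eqref{linear1} and \eqref{linear3}), and it invokes Lemma \ref{oo} to place the full solution in $X^{s-1,1}_{\varepsilon,T}$ --- this is why the proposition is stated for solutions of $(K_\varepsilon)$ rather than for arbitrary functions, a hypothesis your argument never uses. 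Second, the $T^{1/4}$ gain is not a free interpolation at the $b=1/2$ endpoint applicable to every term; in the resonant regions the paper extracts $T^{1/2-}$ from an $L^\infty_{t,x}$ or $L^2_{t,x}$ factor on a compactly time-supported function, and in one subcase of Lemma \ref{lemme2} no power of $T$ is gained at all (which is why \eqref{bilinear} carries terms without the $T^{1/4}$ prefactor that are later absorbed differently). These are fixable with the paper's machinery, but the uniform-in-$\varepsilon$ treatment of the non-resonant balanced interactions is a missing idea, not a detail.
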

 We will make a frequent use of the following linear estimates
 \begin{lemma}\label{linear}
 Let $ \varphi\in S(\R) $ and  $ T\in ]0,1] $ then $\forall 0<\varepsilon \ll 1 $,
  \begin{eqnarray}
  \| P_{A_\varepsilon} \partial_x U_\varepsilon (t) \varphi \|_{L^\infty_x L^2_t}  & \lesssim  & \| \varphi\|_{L^2}  \label{linear1}\\
    \| D^{1/4}_x P_{B_\varepsilon} U_\varepsilon (t) \varphi \|_{L^4_t L^\infty_x}  & \lesssim  & \| \varphi\|_{L^2}   \label{linear2}\\
  \| P_{\le 2} U_\varepsilon (t) \varphi \|_{L^2_x L^\infty_T}  & \lesssim  & \| \varphi\|_{L^2}  \label{linear3}\; ,
  \end{eqnarray}
where $ {\mathcal F}_x (P_{\complement A_\varepsilon}\varphi)= (1-\eta_{A_\varepsilon}){\mathcal F}_x \varphi $ and  the implicit constants are independent of $ \varepsilon>0 $.
 \end{lemma}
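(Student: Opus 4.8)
The plan is to establish each of the three estimates \re{linear1}--\re{linear3} by reducing them, via a standard $TT^*$ (or oscillatory integral) argument, to pointwise bounds on the relevant kernels, exploiting the fact that on the Fourier support of the projectors $P_{A_\varepsilon}$ and $P_{B_\varepsilon}$ one has good lower bounds on $\phi_\varepsilon'$ or $\phi_\varepsilon''$ that are \emph{uniform} in $\varepsilon$.

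\medskip

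\textbf{Estimate \re{linear1} (local Kato smoothing).} First I would write $P_{A_\varepsilon}\partial_x U_\varepsilon(t)\varphi$ as an oscillatory integral in $x$: its space-time Fourier transform is supported where $|\,|\xi|-\sqrt{3/(5\varepsilon)}\,|\gtrsim \varepsilon^{-1/2}$, i.e.\ precisely away from the zero of $\phi_\varepsilon'$. The standard approach is to dualize in $t$: by a $TT^*$ argument, \re{linear1} is equivalent to the boundedness on $L^2_t$ of the operator with kernel
\begin{equation*}
K(t,t')=\int_{\R}\eta_{A_\varepsilon}(\xi)^2\,\xi^2\,e^{i(t-t')\phi_\varepsilon(\xi)}\,d\xi .
\end{equation*}
Splitting the $\xi$-integral into the regions $|\xi|\lesssim \varepsilon^{-1/2}$ and $|\xi|\gtrsim \varepsilon^{-1/2}$ and performing the change of variables $\mu=\phi_\varepsilon(\xi)$ on each monotone branch, one is left with bounding $\int \psi(\mu)e^{i(t-t')\mu}\,d\mu$ where $\psi$ collects the Jacobian $\xi^2/|\phi_\varepsilon'(\xi)|$ and the cutoff. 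On $\operatorname{supp}\eta_{A_\varepsilon}$ we have $|\phi_\varepsilon'(\xi)|=|\xi^2(3-5\varepsilon\xi^2)|\gtrsim \xi^2\cdot(\text{distance to the zero})\gtrsim \varepsilon^{-1/2}|\xi|$ in the critical range, while $|\phi_\varepsilon'(\xi)|\gtrsim \xi^2$ away from it; in both cases $\xi^2/|\phi_\varepsilon'(\xi)|\lesssim 1$ plus an integrable tail, uniformly in $\varepsilon$, so $K$ is (the kernel of) a bounded operator on $L^2_t$ with constant independent of $\varepsilon$. This is the classical local smoothing of Kenig--Ponce--Vega adapted to the fact that we have cut out the bad frequencies.

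\medskip

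\textbf{Estimate \re{linear2} ($L^4_tL^\infty_x$ Strichartz).} Here the relevant non-vanishing quantity is $\phi_\varepsilon''(\xi)=6\xi-20\varepsilon\xi^3=2\xi(3-10\varepsilon\xi^2)$, and on $\operatorname{supp}\eta_{B_\varepsilon}$ one has $|\,|\xi|-\sqrt{3/(10\varepsilon)}\,|\gtrsim \varepsilon^{-1/2}$, hence $|\phi_\varepsilon''(\xi)|\gtrsim \varepsilon^{-1/2}|\xi|$ in the critical range and $|\phi_\varepsilon''(\xi)|\gtrsim |\xi|$ away from it. I would dyadically decompose in frequency, $P_{2^k}P_{B_\varepsilon}U_\varepsilon(t)\varphi$, and on each block use the standard $TT^*$ reduction of the $L^4_tL^\infty_x$ estimate to an $L^2_{t,x}\to L^2_{t,x}$ bound whose kernel is an oscillatory integral $\int_{|\xi|\sim 2^k}\eta_{B_\varepsilon}^2 e^{i(x\xi+t\phi_\varepsilon(\xi))}d\xi$. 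Van der Corput's lemma with the second-derivative bound $|\phi_\varepsilon''|\gtrsim m_k$ (where $m_k$ is $2^k$ or $\varepsilon^{-1/2}2^k$ according to the range) gives a decay $|t|^{-1/2}m_k^{-1/2}$, and tracking the $2^k$ factors together with the $D_x^{1/4}$ weight, summing the Littlewood--Paley pieces via the almost-orthogonality in $L^4_t\ell^2_k$, yields \re{linear2} with an $\varepsilon$-independent constant — this is Koch--Tzvetkov's Strichartz estimate, which is legitimate precisely because $\phi_\varepsilon''$ stays away from zero on the support of $P_{B_\varepsilon}$.

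\medskip

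\textbf{Estimate \re{linear3} (maximal function at low frequencies) and the main obstacle.} For $|\xi|\le 3$ the term $\varepsilon\xi^5$ is a harmless perturbation of $\xi^3$, and $P_{\le 2}U_\varepsilon(t)\varphi$ differs from $P_{\le 2}U_0(t)\varphi$ by a factor $e^{t\varepsilon\xi^5}$ whose symbol and all its $\xi$-derivatives are bounded uniformly for $|\xi|\le 3$, $t\in[0,T]\subset[0,1]$, $\varepsilon\in]0,1[$; so \re{linear3} follows from the known maximal estimate $\|P_{\le 2}U_0(t)\varphi\|_{L^2_xL^\infty_T}\lesssim\|\varphi\|_{L^2}$ for KdV (a one-dimensional maximal function bound, proved e.g.\ by Sobolev embedding in $t$ combined with the $X^{0,b}$ embedding, or directly) together with the fact that multiplication by a nice symbol is bounded on $L^2$. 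The main obstacle in the whole lemma is \emph{uniformity in $\varepsilon$} in \re{linear1}--\re{linear2}: one must be careful that the constants coming from van der Corput and from the change of variables $\mu=\phi_\varepsilon(\xi)$ do not degenerate as $\varepsilon\to 0$, which is exactly why the width $\varepsilon^{-1/2}$ of the excised region (and the numerical factor $20$) is chosen so that $|\phi_\varepsilon'|$ on $\operatorname{supp}P_{A_\varepsilon}$ and $|\phi_\varepsilon''|$ on $\operatorname{supp}P_{B_\varepsilon}$ are bounded below by a definite multiple of their ``expected'' sizes; verifying these lower bounds quantitatively, and checking that the remaining Jacobian factors $\xi^2/|\phi_\varepsilon'(\xi)|$ are integrable with an $\varepsilon$-free bound, is the crux of the argument.
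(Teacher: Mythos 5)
Your treatments of \re{linear1} and \re{linear3} are essentially sound, and \re{linear1} coincides with the paper's argument: after the change of variables $\mu=\phi_\varepsilon(\xi)$ (legitimate since $\phi_\varepsilon$ has a bounded number of monotonicity branches) everything reduces to $\xi^2/|\phi_\varepsilon'(\xi)|\lesssim 1$ on $\supp \eta_{A_\varepsilon}$, which is exactly the bound $|\phi_\varepsilon'(\xi)|\gtrsim \xi^2$ the paper invokes. For \re{linear3} your reduction to the KdV maximal estimate needs one more word: the multiplier $e^{it\varepsilon\xi^5}$ is $t$-dependent, so ``multiplication by a nice symbol is bounded on $L^2$'' does not by itself transfer an $L^2_xL^\infty_T$ bound; you would have to expand $e^{it\varepsilon\xi^5}$ in a power series (absolutely convergent for $|\xi|\le 3$, $|t|\le 1$) and apply the KdV estimate term by term. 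The paper avoids this by proving the kernel bound directly: two integrations by parts using $|x-t\phi_\varepsilon'(\xi)|\gtrsim |x|$ for $|x|$ large, $|t|\le 1$, $|\xi|\le 4$, which is simpler.

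The genuine gap is in \re{linear2}, in the summation of the Littlewood--Paley pieces. Per block your scheme gives $\|D_x^{1/4}P_{2^k}P_{B_\varepsilon}U_\varepsilon(t)\varphi\|_{L^4_tL^\infty_x}\lesssim 2^{k/4}m_k^{-1/4}\|P_{2^k}\varphi\|_{L^2}$ with $m_k=\inf|\phi_\varepsilon''|\sim 2^k\max(1,\varepsilon 2^{2k})$ on $\supp\eta_{B_\varepsilon}$ (your claim $|\phi_\varepsilon''|\gtrsim \varepsilon^{-1/2}|\xi|$ in the critical range is an overstatement; the uniform bound is $\gtrsim|\xi|$ there). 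Hence every block with $2^k\lesssim \varepsilon^{-1/2}$ contributes $O(1)\cdot\|P_{2^k}\varphi\|_{L^2}$, there are $\sim\log(1/\varepsilon)$ of them, and there is no almost-orthogonality available in $L^4_t\ell^2_k$: the $\sup_x$ destroys any $L^2$-type orthogonality of frequency-localized pieces (consider $f_k(x)=e^{i2^kx}g(x)$, for which $\|\sum_{k\le K} f_k\|_{L^\infty_x}\sim K$ while $(\sum\|f_k\|_{L^\infty_x}^2)^{1/2}\sim K^{1/2}$). So the triangle inequality is all you have, yielding at best a $\log(1/\varepsilon)$ loss and defeating the uniformity in $\varepsilon$ that is the whole point of the lemma; for the Airy case $\varepsilon=0$ the sum diverges outright. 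This is exactly why the classical $L^4_tL^\infty_x$ Airy--Strichartz estimate is not proved by dyadic decomposition: one must establish the global dispersive bound $\|D_x^{1/2}P_{B_\varepsilon}U_\varepsilon(t)\varphi\|_{L^\infty_x}\lesssim |t|^{-1/2}\|\varphi\|_{L^1}$, i.e.\ a uniform estimate for the oscillatory integral $\int |\xi|^{1/2}\chi\, e^{i(x\xi+t\phi_\varepsilon(\xi))}\,d\xi$ with its \emph{unbounded} amplitude. That is the crux, and it requires the case analysis the paper performs after rescaling $\theta=\xi|t|^{1/3}$: van der Corput only where the full phase $X\theta+\Phi(\theta)$ is nearly stationary (there the growth of $|\theta|^{1/2}$ at the stationary point is exactly compensated by the size of $\Phi''$ there), a separate van der Corput estimate on the transition annulus $|\theta|\in\bigl[\sqrt{|t|^{2/3}/(10\varepsilon)},\sqrt{2|t|^{2/3}/\varepsilon}\,\bigr]$ where $|\theta|^{1/2}\sim |t|^{1/6}\varepsilon^{-1/4}$ is beaten by $|\Phi''|\gtrsim |t|^{1/3}\varepsilon^{-1/2}$, and non-stationary-phase integration by parts everywhere else using $|\Phi'(\theta)-X|\gtrsim|\Phi'(\theta)|$ together with the asymptotics \re{qs}. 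Your sketch passes over precisely this step.
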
Ê
 \begin{proof}
First, \eqref{linear1} follows from the classical proof of the local Kato smoothing effect, by using that   $ |\phi_\varepsilon'(\xi)| \gtrsim |\xi|^2  $ on the Fourier support of $ P_{A_{\varepsilon}} $.

To prove \eqref{linear2},  we first notice that the Fourier support  of $  P_{B_{\varepsilon}} $
  does not intersect the region
  $ \{ \xi\in \R, \, |\xi|\in [\sqrt{\frac{1}{4\varepsilon}},\sqrt{\frac{7}{20\varepsilon}}] \}$. By the $ TT^{*} $ argument it suffices to prove that
\begin{equation}
\| U_\varepsilon (t) D^{1/2}_x P_{\complement B_\epsilon}  \varphi \|_{L^\infty_x} +\| U_\varepsilon (t) D^{1/2}_x P_{A_\epsilon}  \varphi \|_{L^\infty_x} \lesssim t^{-1/2}\| \varphi\|_{L^1}\label{to2}
\end{equation}
By classical arguments,  \eqref{to1} will be proven if we show
$$
\Bigl\| \int_{\R} \chi_{\{ |\xi|\not\in [\sqrt{\frac{1}{4\varepsilon}},\sqrt{\frac{7}{20\varepsilon}}]\}}
|\xi|^{1/2} e^{i[x\xi+(\xi^3-\varepsilon \xi^5)t]}\, d\xi\Bigr\|_{L^\infty_x} \lesssim t^{-1/2}  \; .
$$
Setting $ \theta:=\xi |t|^{1/3} $ this is equivalent to prove
\begin{equation}\label{too3}
I_{\epsilon}:=\sup_{t\in \R, X\in\R} \Bigl|\int_{\R}  \chi_{\{ |\theta|\not\in [\sqrt{\frac{|t|^{2/3}}{4\varepsilon}},\sqrt{\frac{7|t|^{2/3}}{20\varepsilon}}]\}} \, |\theta|^{1/2} \, e^{i[X\theta+\theta^3-\frac{\varepsilon}{|t|^{2/3}} \theta^5]}\, d\theta \Bigr| \lesssim 1
\end{equation}
We set $ \Phi(\theta)=\Phi_{t,\varepsilon}(\theta):=\theta^3-\frac{\varepsilon}{|t|^{2/3}} \theta^5$ and notice that
$$
 \Phi'(\theta):=
3 \theta^2 - \frac{5\varepsilon}{|t|^{2/3}} \theta^4 \;\mbox{ and  }\;
 \Phi^{''}(\theta)=2\theta \Bigl( 3   - \frac{10\varepsilon}{|t|^{2/3}} \theta^2 \Bigr)\; .$$
\eqref{too3} is obvious when restricted on $ |\theta|\le 100$.  Now, it is worth noticing that
$$
|\Phi''(\theta)|\gtrsim 1+\max\Bigl(|\theta|,  \frac{\varepsilon}{|t|^{2/3}} \theta^3 \Bigr)
$$
whenever  $\theta\in \{ |z|\ge 100\, / \, |z|\not\in [\sqrt{\frac{|t|^{2/3}}{4\varepsilon}},\sqrt{\frac{7|t|^{2/3}}{20\varepsilon}}]\} $.
 Therefore, in the region $ |\theta| \in [\sqrt{\frac{|t|^{2/3}}{10\varepsilon}},\sqrt{\frac{2|t|^{2/3}}{\varepsilon}}]$,
 \eqref{too3} follows from Van der Corput lemma since
 $ |\Phi''(\theta)|\gtrsim 1+\frac{|t|^{1/3}}{\sqrt{\varepsilon}}  $ and $ |\theta|^{1/2}
  \sim \frac{|t|^{1/6}}{\varepsilon^{1/4}}$.  It thus remains to consider the region
 $ |\theta|\not\in  [\sqrt{\frac{|t|^{2/3}}{10\varepsilon}},\sqrt{\frac{2|t|^{2/3}}{\varepsilon}}]$.
 We notice that, in this region, it holds
 \begin{equation}\label{qs}
 |\Phi'(\theta)|\sim |\theta|^2 \mbox{ for } |\theta|\le \sqrt{\frac{|t|^{2/3}}{10\varepsilon}} \; \mbox{ and }\;
  |\Phi'(\theta)|\sim \frac{\varepsilon  |\theta|^4}{ |t|^{2/3}}\mbox{ for } |\theta|\ge\sqrt{\frac{2|t|^{2/3}}{\varepsilon}} \;
\end{equation}
and divide this region into two subregions.\\
$  \bullet $ The subregion $|\Phi'(\theta)-X|\le |X|/ 2 $. Then $ |\Phi'(\theta)|\sim |X| $. Assuming we are in the region $ 100<|\theta|\le  \sqrt{\frac{|t|^{2/3}}{10\varepsilon}} $, we have $ |\Phi'(\theta)|\sim |\theta|^2 $ and thus  $|\theta|\sim \sqrt{|X|}Ê$. Then  \eqref{too3} follows from Van der Corput lemma
 since $ |\Phi''(\theta)|\gtrsim |\theta| \sim  \sqrt{|X|}$.
  On the other hand, assuming that $ |\theta|\ge\sqrt{\frac{2|t|^{2/3}}{\varepsilon}}\ge 100 $ then
   $  |\Phi'(\theta)|\sim \varepsilon  |\theta|^4|t|^{-2/3} $ and  thus
    $ |\theta|\sim \varepsilon^{-1/4}|X|^{1/4} |t|^{1/6}Ê$. \eqref{too3}  follows again from
   Van der Corput lemma
 since $ |\Phi''(\theta)|\gtrsim |\theta|\sim  \varepsilon^{-1/4}|X|^{1/4} |t|^{1/6}Ê$.\\
   $  \bullet $ The subregion $|\Phi'(\theta)-X| > |X|/ 2 $. Then $ |\Phi'(\theta)-X|\sim |\Phi'(\theta) | $ 
   and \eqref{too3} is obtained by integrating by parts and using \eqref{qs}.  This completes the proof of \eqref{linear2}.

Finally, to show \eqref{linear3} we  notice that it suffices to prove that for $ |x|\ge 10^{4} $,
$$
\sup_{t\in [0,1] } \Bigl| \int_{\R}  \eta_{\le 2}(\xi) \, e^{i[x\xi+\phi_{\varepsilon}(\xi)t]}\, d\xi\Bigr|
\lesssim |x|^{-2}\; ,
$$
where $ \phi_\varepsilon(\xi)=\xi^3-\varepsilon \xi^5$. 
But this follows directly by integrating by parts twice since $ |x-\phi_\varepsilon'(\xi)t |\gtrsim |x| $ for any
 $ |t|\le 1 $ and $| \xi|\le 4 $.
 \end{proof}
 To prove Proposition \ref{pro1} we will have to put the whole solution $ u_\varepsilon $ of $(K_\varepsilon) $ and not only $ P_{A_\varepsilon} u_\varepsilon$ in some Bourgain's space with regularity 1 in time. This will be done in the next lemma by noticing that any solution to $(K_\varepsilon) $ that belongs to $ C([0,T]; H^1(\R)) $ automatically belongs to
  $X^{0,1}_{\varepsilon,T} $.
  \begin{lemma}\label{oo}
  Let  $ s\ge 1 $, $ T\in ]0,1[  $ and $ u \in C([0,T];H^s(\R)) $ be a solution to $(K_\varepsilon) $. Then,
  \begin{equation}\label{ze}
  \|u\|_{X^{s-1,1}_{\varepsilon,T}} \lesssim  \|u\|_{L^\infty_T H^{s-1}_x} + \|u \|_{L^\infty_T H^1_x}
    \|u \|_{L^\infty_T H^{s}_x} \; ,
  \end{equation}
   where the implicit constant is independent of $ \varepsilon$.
  \end{lemma}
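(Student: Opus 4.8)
The plan is to read off the estimate from the Duhamel representation of $u$, using only the two elementary linear estimates for the Bourgain spaces attached to $U_\varepsilon$, and being careful that those are uniform in $\varepsilon$. Since $u\in C([0,T];H^s(\R))$ solves $(K_\varepsilon)$ and $s\ge 1$, one has $u^2\in C([0,T];H^s(\R))$ and $u$ satisfies on $[0,T]$ the integral equation
$$ u(t)=U_\varepsilon(t)u(0)-\frac12\int_0^t U_\varepsilon(t-t')\,\partial_x\bigl(u^2(t')\bigr)\,dt'\; . $$
I would fix once and for all a cut-off $\rho\in C_0^\infty(\R)$ with $\rho\equiv 1$ on $[-1,1]$ and $\supp\rho\subset[-2,2]$, and — using $0<T<1$ — introduce the extension
$$ \tilde u(t):=\rho(t)U_\varepsilon(t)u(0)-\frac12\,\rho(t)\int_0^t U_\varepsilon(t-t')\,\chi_{[0,T]}(t')\,\partial_x\bigl(u^2(t')\bigr)\,dt'\; . $$
For $t\in[0,T]$ one has $\rho(t)=1$ and $\chi_{[0,T]}(t')=1$ on the integration range $t'\in[0,t]$, so $\tilde u$ coincides with $u$ on $[0,T]$; hence $\|u\|_{X^{s-1,1}_{\varepsilon,T}}\le\|\tilde u\|_{X^{s-1,1}_\varepsilon}$ and it suffices to bound the latter.

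Next I would invoke the two classical linear estimates
$$ \|\rho(t)U_\varepsilon(t)\psi\|_{X^{s-1,1}_\varepsilon}\lesssim \|\psi\|_{H^{s-1}},\qquad \Bigl\|\rho(t)\int_0^t U_\varepsilon(t-t')F(t')\,dt'\Bigr\|_{X^{s-1,1}_\varepsilon}\lesssim \|F\|_{X^{s-1,0}_\varepsilon}=\|F\|_{L^2_tH^{s-1}_x}\; . $$
Both hold with constants independent of $\varepsilon$: writing $w=U_\varepsilon(-t)v$ one has the isometry $\|v\|_{X^{s',b}_\varepsilon}=\|w\|_{H^b_tH^{s'}_x}$, and since multiplication by $\rho(t)$ commutes with $U_\varepsilon$ the first estimate reduces to $\|\rho(t)\psi\|_{H^1_tH^{s-1}_x}=\|\rho\|_{H^1}\|\psi\|_{H^{s-1}}$, while the second reduces, for each fixed spatial frequency, to the one-dimensional bound $\|\rho(t)\int_0^t h(t')\,dt'\|_{H^1_t}\lesssim\|h\|_{L^2_t}$, obtained by differentiating the product and estimating $\int_0^t h$ by $|t|^{1/2}\|h\|_{L^2_t}$ on the compact set $\supp\rho$. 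In particular $\phi_\varepsilon$ has disappeared, and — $\rho$ being fixed — the constants are also independent of $T$. Applying these with $\psi=u(0)$ and $F=\chi_{[0,T]}\partial_x(u^2)$ gives
$$ \|\tilde u\|_{X^{s-1,1}_\varepsilon}\lesssim \|u(0)\|_{H^{s-1}}+\bigl\|\partial_x(u^2)\bigr\|_{L^2_TH^{s-1}_x}\; . $$

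Finally I would control the nonlinear term. Since $\langle\xi\rangle^{s-1}|\xi|\le\langle\xi\rangle^{s}$ we have $\|\partial_x(u^2)\|_{H^{s-1}}\le\|u^2\|_{H^s}$, and by the fractional Leibniz (Kato--Ponce) inequality together with the one-dimensional embedding $H^1(\R)\hookrightarrow L^\infty(\R)$,
$$ \|u^2\|_{H^s}\lesssim \|u\|_{L^\infty}\|u\|_{H^s}\lesssim \|u\|_{H^1}\|u\|_{H^s}\; . $$
Hence, using $T<1$, $\|\partial_x(u^2)\|_{L^2_TH^{s-1}_x}\le T^{1/2}\sup_{t\in[0,T]}\|\partial_x(u^2(t))\|_{H^{s-1}}\lesssim\|u\|_{L^\infty_TH^1_x}\|u\|_{L^\infty_TH^s_x}$, and since $\|u(0)\|_{H^{s-1}}\le\|u\|_{L^\infty_TH^{s-1}_x}$ this yields \eqref{ze}. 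The only genuinely delicate point is the uniformity in $\varepsilon$, but as explained it is automatic once one passes to the interaction representation $U_\varepsilon(-t)u$, in which all the linear estimates used become estimates where $\phi_\varepsilon$ no longer appears; everything else is the standard Duhamel argument.
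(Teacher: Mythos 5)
Your proof is correct and follows essentially the same route as the paper: both arguments reduce the $X^{s-1,1}_{\varepsilon,T}$ bound to an $H^1_tH^{s-1}_x$ bound on the interaction variable $v=U_\varepsilon(-t)u$ (which eliminates $\phi_\varepsilon$ and hence all $\varepsilon$-dependence) and then conclude with the same product estimate $\|\partial_x(u^2)\|_{H^{s-1}}\le\|u^2\|_{H^s}\lesssim\|u\|_{H^1}\|u\|_{H^s}$ valid for $s\ge 1$. The only difference is presentational: the paper differentiates $v$ directly using the equation and extends it constantly in time, whereas you package the identical computation as the Duhamel formula plus the standard homogeneous and inhomogeneous linear estimates with a time cut-off.
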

  \begin{proof}
  First, we consider $v(t)=U_\varepsilon (-t)u(t)$ on the time interval $]0,T]$ and extend
$v$ on $]-2,2[$ by setting $\partial_tv=0$ on $[-2,2] \setminus [0,T]$.
Then, it is pretty clear that
\begin{displaymath}
\|\partial_tv\|_{L^2(]-2,2[;H^{s-1}_x)}=\|\partial_tv\|_{L^2_T H^{s-1}_x},
\quad \text{and} \quad \|v\|_{L^2(]-2,2[;H^{s-1}_x)} \lesssim
 \|v\|_{L^{\infty}_{T} H^{s-1}_x} \; .
\end{displaymath}
Now, we define
$\tilde{u}(x,t)=\eta(t)U(t)v(t)$. Obviously, $ \tilde{u} $ is an extension of $ u$ outside $ ]-T,T[ $ and  it holds
\begin{equation} \label{apriori u.1b}
\|\tilde{u}\|_{X_\varepsilon^{s-1,1}}\lesssim
\|\partial_tv\|_{L^2(]-2,2[;H^{s-1}_x) }
+\|v\|_{L^2(]-2,2[;H^{s-1}_x)}
\lesssim \|\partial_tv\|_{L^2_T H^{s-1}_x}
+ \|v\|_{L^{\infty}_{T} H^{s-1}_x} \; .
\end{equation}
Therefore  \eqref{ze} follows from the identity
\begin{displaymath}
\partial_tv=U_\varepsilon (-t) \Bigl[ u_t +u_{xxx}Ê+\varepsilon u_{5x}\Big] 
\end{displaymath}
together with the facts that $u$ is a solution to $(K_\varepsilon) $ and that 
$$
\|u u_x \|_{H^{s-1}_x} \le \| u^2 \|_{H^s_x} \lesssim \| u\|_{L^\infty_x}  \| u \|_{H^s_x}
$$
as soon as $ s\ge 1$.
  \end{proof}
   Now, according to the Duhamel formula and to classical linear estimates in Bourgain's spaces (cf. \cite{Bourgain1993}, \cite{Gi}), Proposition \ref{pro1} is a direct consequence of the following bilinear estimate
 \begin{eqnarray}
 \| P_{A_\varepsilon} \partial_x (u_1 u_2 ) \|_{X^{s,-1/2,1}_\varepsilon}
  &\lesssim & 
  T^{1/4}\Bigl(  \|u_1 \|_{Y^s_\varepsilon}+  \|u_1 \|_{X^{s-1,1}_\varepsilon}\Bigr)
  \Bigl(  \|u_2 \|_{Y^1_\varepsilon}+  \|u_2 \|_{X^{0,1}_\varepsilon}\Bigr)\nonumber \\
   & & +   T^{1/4} \Bigl(  \|u_1 \|_{Y^1_\varepsilon}+  \|u_1 \|_{X^{0,1}_\varepsilon}\Bigr)
  \Bigl(  \|u_2 \|_{Y^{s-1}_\varepsilon}+  \|u_2 \|_{X^{s-1,1}_\varepsilon}\Bigr)
  \label{bilinear} \; ,
 \end{eqnarray}
 where the functions $ u_i $ are supported in time in $ ]-T,T[ $ with $ 0<T\le 1 $. 
To prove this bilinear estimate we first note  that by symmetry it
suffices to consider $ \partial_x \Lambda(u,v) $ where
$\Lambda(\cdot,\cdot) $ is defined by
$$
{\mathcal F}_x (\Lambda(u,v)):=\int_{\R} \chi_{|\xi_1|\le
|\xi-\xi_1|} ({\mathcal F}_x u) (\xi_1) ({\mathcal F}_x v)
(\xi-\xi_1)\, d\xi_1\; .
$$
Moreover, using that for any $ s\ge 1 $, 
$$
\langle \xi_1+\xi_2\rangle^s \lesssim 
\langle \xi_1+\xi_2\rangle\Bigl( \langle \xi_1\rangle^{s-1}+\langle \xi_2\rangle^{s-1}\Bigr) \; ,
$$
it is a classical fact that we can restrict ourself to prove  \eqref{bilinear} for $ s=1 $.\\
As mentioned in the introduction, the  following resonance relation
is crucial for our analysis in this frequency area :
\begin{equation}\label{resonance}
\Theta(\xi,\xi_1):=\sigma-\sigma_1-\sigma_2= \xi \xi_1 (\xi-\xi_1) \Bigr[3-5 \varepsilon \Bigl( (\xi_1+\xi_2)^2-\xi_1\xi_2\Bigr) \Bigr]
\end{equation}
where
$$
\sigma:=\sigma(\tau,\xi):=\tau-\xi^3-\varepsilon \xi^5 ,\quad
 \sigma_1:=\sigma(\tau_1,\xi_1) \;
\mbox{ and } \sigma_2:=\sigma(\tau-\tau_1,\xi-\xi_1)\; .
$$
We start by noticing that the case of ouput frequencies of order less or equal to one is harmless. Indeed, it is easy to check that for any couple 
$ u_i $, $ i=1,2 $,  of  smooth functions supported in time in $ ]-T,T[ $ with $ 0<T\le 1 $ it holds 
\begin{equation}\label{eqeq1}
\|\partial_x P_{A_\varepsilon} P_{\le 8} \Lambda( u_1,u_2) \|_{X^{1,-1/2,1}_\varepsilon }\lesssim \|  \Lambda( u_1,u_2)\|_{L^2} 
\lesssim \|u_1\|_{L^\infty_t H^1} \|u_2 \|_{L^\infty_t H^1} 
  \; .
 \end{equation}
Let us continue by deriving  an estimate for  the interactions of high frequencies with frequencies of order less or equal to  1.
\begin{lemma}\label{lemme1} Let $ u_i $, $ i=1,2 $, be two smooth functions supported in time in $ ]-T,T[ $ with $ 0<T\le 1 $.  Then it holds 
\begin{equation}\label{eqeq}
\|\partial_x P_{A_\varepsilon}\Lambda(P_{\le 8} u_1,u_2) \|_{X^{1,-1/2,1}_\varepsilon }
\lesssim  \|u_1\|_{X^{0,1}_\varepsilon}
 \Bigl( T^{1/4}(\|P_{A_\varepsilon} u_2\|_{X^{1,1/2,1}_\varepsilon}+ \| u_2\|_{X^{0,1}_\varepsilon})
 +\| \partial_x u_2\|_{L^2_{tx}} \Bigr)
  \; .
 \end{equation}
  \end{lemma}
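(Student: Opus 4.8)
The plan is to isolate the high$\times$low structure of $\Lambda(P_{\le 8}u_1,u_2)$ and to play off the resonance relation \eqref{resonance} against the position of the high frequency. By \eqref{eqeq1} we may assume the output frequency satisfies $|\xi|\gg 1$, and then, since $\Lambda(P_{\le 8}u_1,u_2)$ forces $|\xi_1|\le 8$, the high factor $u_2$ carries the frequency $|\xi-\xi_1|\sim|\xi|=:N$. I would split $u_2=P_{A_\varepsilon}u_2+(1-P_{A_\varepsilon})u_2$ and $u_1=P_{\le 1}u_1+(P_{\le 8}-P_{\le 1})u_1$. In the three pieces in which either the low factor is $P_{\le 1}u_1$ (so that $|\xi_1|$ may be arbitrarily small and the resonance degenerates) or the high factor is $(1-P_{A_\varepsilon})u_2$ (so that $|\xi_2|\sim\varepsilon^{-1/2}$ and the high factor carries no uniform dispersion), I would argue directly; only the remaining piece $\partial_x P_{A_\varepsilon}\Lambda\big((P_{\le 8}-P_{\le 1})u_1,P_{A_\varepsilon}u_2\big)$, where $|\xi_1|\sim 1$ and $\xi_2\in\supp P_{A_\varepsilon}$, is handled through \eqref{resonance}.

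For the three direct pieces the key is that the \emph{output} frequency lies in $\supp P_{A_\varepsilon}$, where $|\phi_\varepsilon'(\xi)|\gtrsim|\xi|^2$, so the local smoothing estimate \eqref{linear1} and its retarded/dual form apply uniformly in $\varepsilon$. Commuting $\langle D_x\rangle$ onto the high factor (legitimate because $\langle\xi\rangle\sim\langle\xi_2\rangle$ in each of these pieces), using $\|fg\|_{L^1_xL^2_t}\le\|f\|_{L^2_xL^\infty_t}\|g\|_{L^2_{tx}}$ together with the maximal estimate \eqref{linear3} transferred to the Bourgain space — which gives $\|P_{\le 8}u_1\|_{L^2_xL^\infty_t}\lesssim\|u_1\|_{X^{0,1}_\varepsilon}$ — one bounds these pieces by $\|u_1\|_{X^{0,1}_\varepsilon}\|\langle D_x\rangle u_2\|_{L^2_{tx}}$. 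For $(1-P_{A_\varepsilon})u_2$ one has $\langle D_x\rangle(1-P_{A_\varepsilon})u_2\sim\partial_x u_2$ and this produces the last term of \eqref{eqeq}; for $P_{A_\varepsilon}u_2$ paired with $P_{\le 1}u_1$ one instead writes $\|\langle D_x\rangle P_{A_\varepsilon}u_2\|_{L^2_{tx}}=\|P_{A_\varepsilon}u_2\|_{L^2_tH^1_x}\lesssim T^{1/2}\|P_{A_\varepsilon}u_2\|_{X^{1,1/2,1}_\varepsilon}$ using the time support of $u_2$ in $]-T,T[$ and the embedding $X^{1,1/2,1}_\varepsilon\hookrightarrow C([0,T];H^1_x)$, and $T^{1/2}\le T^{1/4}$ since $T\le 1$.

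For the remaining piece the resonance \eqref{resonance} does the work. On $\supp P_{A_\varepsilon}$, with $|\xi_1|\sim 1$ and $|\xi_1|\ll|\xi|$, one has $|3-5\varepsilon(\xi^2-\xi_1\xi_2)|\sim|3-5\varepsilon\xi^2|\gtrsim 1$, hence
\[
|\Theta(\xi,\xi_1)|\;\sim\;|\xi_1|\,|\phi_\varepsilon'(\xi)|\;\gtrsim\;\langle\xi\rangle^2 ,
\]
the exact analogue, uniform in $\varepsilon$, of the KdV resonance $3\xi\xi_1\xi_2$; crucially only the soft bound $|\phi_\varepsilon'(\xi)|\gtrsim\langle\xi\rangle^2$ on $\supp P_{A_\varepsilon}$ is used, never the much larger $\sim\varepsilon|\xi|^4$ valid for $|\xi|\gg\varepsilon^{-1/2}$, which would destroy the uniformity. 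After Littlewood–Paley decomposition in $N$ and a dyadic decomposition of $\langle\sigma\rangle\sim L$, $\langle\sigma_1\rangle\sim L_1$, $\langle\sigma_2\rangle\sim L_2$, the bound $L_{\max}:=\max(L,L_1,L_2)\gtrsim|\Theta|\gtrsim N^2$ is in force, and I would distinguish the cases $L=L_{\max}$, $L_2=L_{\max}$, $L_1=L_{\max}$. In the first two cases the modulation weight $\langle\sigma\rangle^{-1/2}$ of $X^{1,-1/2,1}_\varepsilon$ (resp.\ the gain $\langle\sigma_2\rangle^{-1/2}\langle\xi_2\rangle^{-1}$ afforded by $\|P_{A_\varepsilon}u_2\|_{X^{1,1/2,1}_\varepsilon}$) together with $L_{\max}\gtrsim N^2$ absorbs the extra power of $\langle\xi\rangle$ coming from $\partial_x$; since $L_{\max}\gtrsim N^2$ also turns the dyadic modulation summations into geometric series, the output lands in the $\ell^1$-in-modulation space rather than merely in $X^{1,-1/2}_\varepsilon$, and the standard $L^2$ bilinear bound (Cauchy–Schwarz against the resonant set, of measure $\lesssim\min(L_1,L_2)$) closes the estimate against $\langle\xi_2\rangle^{-1}\|P_{A_\varepsilon}u_2\|_{X^{1,1/2,1}_\varepsilon}$ or against $\|u_2\|_{X^{0,1}_\varepsilon}$. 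In the case $L_1=L_{\max}$ the even stronger gain $\langle\sigma_1\rangle^{-1}\lesssim N^{-2}$ extracted from $\|u_1\|_{X^{0,1}_\varepsilon}$ leaves room to spare. The factor $T^{1/4}$ is produced, as is standard, by a Hölder inequality in time on the time-localized factors (typically in the guise $\|P_{\le 8}u_1\|_{L^4_TL^\infty_x}\lesssim T^{1/4}\|u_1\|_{X^{0,1}_\varepsilon}$).

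The point I expect to be genuinely delicate is precisely this sorting. The resonance $|\Theta|\sim|\xi_1|\langle\xi\rangle^2$ degenerates as $|\xi_1|\to 0$, so one must check that the pieces on which it is unavailable — the $P_{\le 1}u_1$ contribution, and the contribution of the high factor near the transition layer around $\sqrt{3/(5\varepsilon)}$ — are exactly the ones that the $L^1_xL^2_t$/dual-smoothing route can absorb, and that all the estimates, including the passage to the $\ell^1$-in-modulation space $X^{1,-1/2,1}_\varepsilon$, survive it. Above all, every bound must be uniform in $\varepsilon\in\,]0,1[$: one has to forgo everywhere the strong but $\varepsilon$-singular dispersion available for $|\xi|\gg\varepsilon^{-1/2}$ and rely only on the $\varepsilon$-robust facts $|\phi_\varepsilon'(\xi)|\gtrsim\langle\xi\rangle^2$ and $|\Theta(\xi,\xi_1)|\gtrsim\langle\xi\rangle^2$ that hold on $\supp P_{A_\varepsilon}$ in the high$\times$low regime. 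That is the structural tension the computation has to respect.
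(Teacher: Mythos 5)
Your overall architecture (reduce to low$\times$high, invoke the resonance when $|\xi_1|\gtrsim 1$, fall back on the uniform dispersive estimates otherwise) is in the right spirit, and three of your four pieces can be closed essentially as you describe: $(P_{\le 8}-P_{\le 1})u_1\cdot P_{A_\varepsilon}u_2$ by the resonance, $(P_{\le 8}-P_{\le 1})u_1\cdot(1-P_{A_\varepsilon})u_2$ by the resonance as well (it only needs the \emph{output} in $\supp\eta_{A_\varepsilon}$ and $|\xi_1|\gtrsim 1$), and $P_{\le 1}u_1\cdot P_{A_\varepsilon}u_2$ by pairing the maximal estimate \eqref{linear3} on $u_1$ with the \emph{forward} smoothing \eqref{linear1} applied to $P_{A_\varepsilon}u_2$, i.e. $\|\partial_x^2P_NP_{A_\varepsilon}u_2\|_{L^\infty_xL^2_t}\lesssim\|P_NP_{A_\varepsilon}u_2\|_{X^{1,1/2,1}_\varepsilon}$ (this is how the paper treats its term $I_1$), rather than by dual smoothing on the output. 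The genuine gap is the fourth piece, $P_{\le 1}u_1\cdot(1-P_{A_\varepsilon})u_2$ with output in $\supp\eta_{A_\varepsilon}$. This piece is nonempty (both $\xi$ and $\xi_2$ can sit in the transition zone of $\eta_{A_\varepsilon}$, which has width $\sim\varepsilon^{-1/2}\gg|\xi_1|$), and there none of your tools applies: forward smoothing on $u_2$ fails because $\xi_2$ lies where $\phi_\varepsilon'$ vanishes; the resonance $|\Theta|\sim|\xi\,\xi_1\,\xi_2|$ degenerates linearly in $|\xi_1|$, and a dyadic summation over $|\xi_1|\sim M$ loses at least a factor $(\log\varepsilon^{-1})^{1/2}$ and treats nothing below $M\sim\varepsilon$; and the dual smoothing on the output, your stated fallback, only lands in $X^{1,-1/2,\infty}_\varepsilon$ by transference and duality, whereas the lemma requires the $\ell^1$-in-modulation norm $X^{1,-1/2,1}_\varepsilon$ and here the modulations are unrestricted, so the sum $\sum_L L^{-1/2}\|P_L(\cdot)\|_{L^2}$ has no convergence factor.

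The missing idea is the paper's commutator decomposition $\eta_{A_\varepsilon}(\xi)=\eta_{A_\varepsilon}(\xi-\xi_1)+\bigl(\eta_{A_\varepsilon}(\xi)-\eta_{A_\varepsilon}(\xi-\xi_1)\bigr)$. The first summand transfers the good projector onto the high factor, so that \eqref{linear1} applies; the second is supported where $|\xi|\sim\varepsilon^{-1/2}$ but outside $\bigl[\frac{15}{16}\sqrt{3/(5\varepsilon)},\frac{17}{16}\sqrt{3/(5\varepsilon)}\bigr]$, so the resonance bound \eqref{relare3} is available, and — crucially — it carries the smallness $|\eta_{A_\varepsilon}(\xi)-\eta_{A_\varepsilon}(\xi-\xi_1)|\lesssim\min(1,\sqrt{\varepsilon}\,|\xi_1|)$ of \eqref{relare2}. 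The factor $|\xi_1|$ exactly cancels the degeneration of $|\sigma_{\max}|\gtrsim|\xi\,\xi_1(\xi-\xi_1)|$ as $\xi_1\to 0$, and the factor $\sqrt{\varepsilon}$ trades against one power of $|\xi|\sim\varepsilon^{-1/2}$. Without extracting this cancellation, the small-$\xi_1$/transition-band interaction does not close uniformly in $\varepsilon$, which is precisely the uniformity your argument set out to protect.
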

\begin{proof}  Since the norms in the right-hand side of \eqref{eqeq} only see the size of the modulus of the Fourier transform, we can assume that all our functions have non negative Fourier transform.
We  set $ \eta_{A_\varepsilon} =1 - \eta_0\Bigl[20 \sqrt{\varepsilon}
\Bigl(|\xi|-\sqrt{\frac{3}{5\varepsilon}}\Bigr) \Bigr] $ so that
 $ \widehat{P_{A_\varepsilon} f} = \eta_{A_\varepsilon} \widehat{f} $.
Rewriting $ \eta_{A_\varepsilon}(\xi) $ as  $\eta_{A_\varepsilon}(\xi-\xi_1) +( \eta_{A_\varepsilon}(\xi) - \eta_{A_\varepsilon}(\xi-\xi_1) )$, it suffices to estimate the two following terms
$$
I_1:=\Bigl\| {\mathcal F}^{-1}_{x} \Bigl( \partial_x \Lambda(\eta_{\le 8} {\mathcal F}_x (u_1) , \eta_{A_\varepsilon}
 {\mathcal F}_x (u_2) \Bigr) \Bigr\|_{X^{1,-1/2,1}}$$
 and
 $$
I_2:=\Bigl\| {\mathcal F}^{-1}_{x}\Bigl(  \xi \, \int_{\R} \eta_{\le 8}(\xi_1) {\mathcal F}_x (u_1)(\xi_1)  (\eta_{A_\varepsilon}(\xi)-\eta_{A_\varepsilon}(\xi-\xi_1))
 {\mathcal F}_x (u_2) (\xi-\xi_1)\, d\xi_1\Bigr) \Bigr\|_{X^{1,-1/2,1}}$$
 $ I_1$ is easily estimate thanks to \eqref{linear3} by
 \begin{eqnarray*}
 I_1^2 & \lesssim &\sum_{N\ge 1} T^{\frac{1}{2}-}\Bigl\| (\eta_{\le 8} \widehat{u_1})\ast (\eta_{N} \eta_{A_\varepsilon}
  \widehat{\partial_x^2 u_2})\Bigr\|_{L^2}^2 \\
  & \lesssim & T^{\frac{1}{2}-} \sum_{N\ge 1} \| P_{\le 8} u_1 \|_{L^2_x L^\infty_t}^2 \|\partial_x^2 P_N P_{A_\varepsilon} u_2
   \|_{L^\infty_x L^2_t}^2 \\
    & \lesssim & T^{\frac{1}{2}-} \|u_1\|_{X^{0,1}}^2 \| P_{A_\varepsilon} u_2 \|_{X^{1,1/2,1}}^2\quad .
 \end{eqnarray*}
To estimate $ I_2 $ we first notice that for $ |\xi_1|\le 4 $ and $ 0<\varepsilon<10^{-8}Ê$,
  \begin{equation}\label{relare1}
 \eta_{A_\varepsilon}(\xi)-\eta_{A_\varepsilon}(\xi-\xi_1)=0 \mbox{ whenever } |\xi|  \in \Bigr[\frac{15}{16} \sqrt{\frac{3}{5\varepsilon}},
\frac{17}{16} \sqrt{\frac{3}{5\varepsilon}} \Bigl]\cup \complement \Bigl[\frac{2^{-3}}{\sqrt{\varepsilon}}, \frac{2^3}{\sqrt{\varepsilon}}\Bigr] \; .
   \end{equation}
and for any $(\xi,\xi_1)  \in \R^2 $,
 \begin{equation}\label{relare2}
 |\eta_{A_\varepsilon}(\xi)-\eta_{A_\varepsilon}(\xi-\xi_1)| \lesssim \min \Bigl( 1, \sqrt{\varepsilon}
 |\xi_1| \Bigr) \; .
  \end{equation}
Moreover,  in the region
 $ |\xi_1|\le 4 $ and   $|\xi|  \not \in [\frac{15}{16}  \sqrt{\frac{3}{5\varepsilon}},
\frac{17}{16}  \sqrt{\frac{3}{5\varepsilon}}] $ the resonance relation \eqref{resonance}Ê ensures that
 \begin{equation}\label{relare3}
 |\sigma_{max}|:=\max(|\sigma|,|\sigma_1|,|\sigma_2|)\gtrsim |\xi \xi_1 (\xi-\xi_1)|
 \end{equation}
 where $ \sigma(\tau,\xi):=\tau-\phi_{\varepsilon}(\xi)$, $\sigma_1=\sigma(\tau_1,\xi_1)$ and
  $ \sigma_2=\sigma(\tau-\tau_1,\xi-\xi_1)$.
  We separate three regions  \\
  $\bullet $ $\sigma_{max}=\sigma_2 $.  Then according to \eqref{relare1}-\eqref{relare3},
  \begin{eqnarray*}
  I_2 & \lesssim & T^{\frac{1}{2}-} \Bigl\|\int_{\R^2} (\eta_{\le 8} \widehat{u_1})(\xi_1,\tau_1) \sqrt{\varepsilon}
  \frac{|\xi_1|\langle \xi\rangle^2}{|\xi_1||\xi-\xi_1|^2} \langle \sigma_2 \rangle \chi_{\{|\xi-\xi_1|\sim  \frac{1}{\sqrt{\varepsilon}}\}}
  \widehat{u_2}(\xi-\xi_1,\tau-\tau_1)\, d\xi_1\, d\tau_1 \Bigr\|_{L^2_{\xi,\tau}(|\xi|\sim \frac{1}{\sqrt{\varepsilon}})} \\
  & \lesssim & T^{\frac{1}{2}-} \| P_{\le 8}u_1 \|_{L^\infty_{tx}} \| u_2 \|_{X^{-1/2,1}} \\
  & \lesssim & T^{\frac{1}{2}-} \| u_1 \|_{X^{0,1}} \| u_2 \|_{X^{0,1}}
  \end{eqnarray*}
   $\bullet $ $\sigma_{max}=\sigma_1 $.  Then according to \eqref{relare1}-\eqref{relare3},
  \begin{eqnarray*}
  I_2 & \lesssim & T^{\frac{1}{2}-} \Bigl\| \langle \xi \rangle^2 \int_{\R^2} \frac{\langle \sigma_1 \rangle}{|\xi_1| |\xi-\xi_1|^2}  (\eta_{\le 8} \widehat{u_1})(\xi_1,\tau_1) \sqrt{\varepsilon} |\xi_1|
 \chi_{\{|\xi-\xi_1|\sim  \frac{1}{\sqrt{\varepsilon}}\}}
  \widehat{u_2}(\xi-\xi_1,\tau-\tau_1)\, d\xi_1\, d\tau_1 \Bigr\|_{L^2_{\xi,\tau}(|\xi|\sim \frac{1}{\sqrt{\varepsilon}})} \\
  & \lesssim &T^{\frac{1}{2}-}  \| u_1 \|_{X^{0,1}} \| D_x^{-1/2} {\mathcal F}^{-1}(\chi_{\{|\xi|\sim
    \frac{1}{\sqrt{\varepsilon}}\}}\widehat{u_2}) \|_{L^\infty_{tx}} \\
  & \lesssim & T^{\frac{1}{2}-} \| u_1 \|_{X^{0,1}} \|  {\mathcal F}^{-1}(\chi_{\{|\xi|\sim
    \frac{1}{\sqrt{\varepsilon}}\}}\widehat{u_2} \|_{L^\infty_{t}L^2_x} \\
    & \lesssim & T^{\frac{1}{2}-} \| u_1 \|_{X^{0,1}} \| u_2 \|_{X^{0,1}}
  \end{eqnarray*}
   $\bullet $ $\sigma_{max}=\sigma $.  Then according to \eqref{relare1}-\eqref{relare3},
  \begin{eqnarray*}
  I_2 & \lesssim & \Bigl\| \langle \xi \rangle^2 \int_{\R} \frac{\sqrt{\varepsilon} |\xi_1|}{|\xi_1|^{3/8} |\xi-\xi_1|^{3/4}}
     (\eta_{\le 8} \widehat{u_1})(\xi_1)
 \chi_{\{|\xi-\xi_1|\sim  \frac{1}{\sqrt{\varepsilon}}\}}
  \widehat{u_2}(\xi-\xi_1)\, d\xi_1 \Bigr\|_{L^2(|\xi|\sim \frac{1}{\sqrt{\varepsilon}})} \\
  & \lesssim & \sqrt{\varepsilon} \| P_{\le 8} u_1 \|_{L^\infty_{tx}}
   \| D_x^{5/4} {\mathcal F}^{-1}(\chi_{\{|\xi|\sim
    \frac{1}{\sqrt{\varepsilon}}\}}\widehat{u_2}) \|_{L^2_{tx}} \\
  & \lesssim & \| u_1 \|_{X^{0,1}} \| \partial_x u_2 \|_{L^2_{tx}}
  \end{eqnarray*}
  This completes the proof of the lemma.
  \end{proof}
The next lemma ensures that the restriction of the left-side member of \eqref{bilinear} on the region $ |\xi|\gtrsim 1 $, $ |\xi_1|\gtrsim 1 $ and $ |\sigma_{max}| \ge 2^{-5} |\xi \xi_1 (\xi-\xi_1) | $ can be easily
 controlled.
 \begin{lemma}\label{lemme2}
Under the same hypotheses as in Lemma \ref{lemme1},  in the region  where the following strong resonance relation holds 
 \begin{equation}\label{strong}
  |\sigma_{max}| \ge 2^{-5} |\xi \xi_1 (\xi-\xi_1) |\; ,
  \end{equation}
    we have
 \begin{equation}\label{eqlemme2}
 \|\partial_x ÊP_{A_\varepsilon} P_{\ge 8}\Lambda(P_{\ge 8} u_1,u_2) \|_{X^{1,-1/2,1}_\varepsilon }  \lesssim 
 T^{1/4}  \|u_1\|_{X^{0,1}}  \|u_2\|_{X^{0,1}} +
  \Bigl( \|u_1\|_{X^{0,1}} + \|\partial_x u_1\|_{L^2_{tx}} \Bigr)   \|\partial_x u_2\|_{L^2_{tx}}\; .
 \end{equation}
 \end{lemma}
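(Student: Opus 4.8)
The plan is to run the standard Bourgain‑space bilinear analysis, the gain coming entirely from the strong resonance \eqref{strong} through the identity \eqref{resonance}. Decompose dyadically: let $N,N_1,N_2$ be the sizes of $|\xi|,|\xi_1|,|\xi-\xi_1|$ and $L,L_1,L_2$ the sizes of $\langle\sigma\rangle,\langle\sigma_1\rangle,\langle\sigma_2\rangle$. On the relevant support of $P_{\ge 8}\otimes P_{\ge 8}$ in the $\Lambda$‑integral one has $8\lesssim N_1\le N_2$, and since $|\xi|\le 2|\xi-\xi_1|$ also $N\lesssim N_2\sim N_{\max}:=\max(N,N_1,N_2)$; moreover $|\xi\xi_1(\xi-\xi_1)|\sim N_{thd}\,N_{\max}^2$ with $N_{thd}:=\min(N,N_1)$. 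Then \eqref{strong} and \eqref{resonance} yield
\[
L_{\max}:=\max(L,L_1,L_2)\gtrsim |\xi\xi_1(\xi-\xi_1)|\sim N_{thd}\,N_{\max}^2 .
\]
What has to be absorbed is the output weight $|\xi|\langle\xi\rangle\lesssim N\langle N\rangle\lesssim N_{\max}^2$ (only $O(1)$ when the output frequency is $O(1)$), the factor $\langle\sigma\rangle^{-1/2}$, and the $\ell^1$‑summations over $L,L_1,L_2$.

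I would split according to which of $\sigma,\sigma_1,\sigma_2$ realizes $L_{\max}$. If $L=L_{\max}$, use $\langle\sigma\rangle^{-1/2}\lesssim(N_{thd}N_{\max}^2)^{-1/2}$ directly; if $L_i=L_{\max}$ for an input $i$, write instead $\|P_{N_i}P_{\sigma_i\sim L_i}u_i\|_{L^2}=\langle L_i\rangle^{-1}\big(\langle L_i\rangle\|P_{N_i}P_{\sigma_i\sim L_i}u_i\|_{L^2}\big)$ and spend $\langle L_i\rangle^{-1}\lesssim(N_{thd}N_{\max}^2)^{-1}$ against the $X^{0,1}_\varepsilon$‑norm of $u_i$ (whose $\langle\sigma_i\rangle$‑weight moreover makes the $\ell^1$‑sum over $L_i$ harmless). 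In each case one estimates the $L^2_{\xi,\tau}$‑norm of the frequency‑ and modulation‑localized piece of the bilinear term by the elementary convolution (``box'') estimate, $\varepsilon$‑uniformly,
\[
\big\|P_{\sigma\sim L}\Lambda\big(P_{N_1}P_{\sigma_1\sim L_1}u_1,P_{N_2}P_{\sigma_2\sim L_2}u_2\big)\big\|_{L^2_{\xi,\tau}}\lesssim\big(N_{thd}\min(L_1,L_2)\big)^{1/2}\|P_{N_1}P_{\sigma_1\sim L_1}u_1\|_{L^2}\|P_{N_2}P_{\sigma_2\sim L_2}u_2\|_{L^2},
\]
which follows from Young's inequality in the smaller input frequency variable (or from Cauchy--Schwarz over the output frequency set when the output frequency is the smallest) together with the fact that the two modulation restrictions confine the inner $\tau_1$‑integration to an interval of length $\min(L_1,L_2)$; when finer gains are needed in the high$\times$high regime one replaces this by the ($\varepsilon$‑uniform) Kenig--Ponce--Vega type bilinear estimate.

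Power counting then goes as follows. The two units of output derivative $N_{\max}^2$ are handled by placing the high‑frequency factor $u_2$ in $\|\partial_xu_2\|_{L^2_{tx}}\sim\big(\sum_{N_2,L_2}N_2^2\|P_{N_2}P_{\sigma_2\sim L_2}u_2\|_{L^2}^2\big)^{1/2}$, which absorbs one factor $N_{\max}$, and the residual factor $N_{\max}$ is beaten by $\langle L_{\max}\rangle^{-1/2}\lesssim(N_{thd}N_{\max}^2)^{-1/2}$ reinforced by the $N_{thd}^{1/2}$ of the convolution estimate; the remaining negative powers (of $N_{thd}$, $N_{\max}$ and the $L_i$, the latter coming from the $\langle\sigma_i\rangle$‑weights of the $X^{0,1}_\varepsilon$‑norms and from the $\ell^1$‑modulation structure of $X^{s,b,1}_\varepsilon$) make the dyadic summations converge after one Cauchy--Schwarz, producing $(\|u_1\|_{X^{0,1}_\varepsilon}+\|\partial_xu_1\|_{L^2_{tx}})\|\partial_xu_2\|_{L^2_{tx}}$. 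In the sub‑case where it is impossible to both absorb a derivative on $u_2$ and sum its modulation --- essentially when $L_2=L_{\max}$ --- one instead estimates $u_2$ through $\|u_2\|_{X^{0,1}_\varepsilon}$ (no derivative), recovering the missing power of frequency entirely from $\langle L_2\rangle^{-1}\lesssim(N_{thd}N_{\max}^2)^{-1}$ and the missing power of $T$ from the fact that $u_1,u_2$ are supported in time in $]-T,T[$, via the time‑localization bound $\|f\|_{X^{s,b'}_\varepsilon}\lesssim T^{\,b-b'}\|f\|_{X^{s,b}_\varepsilon}$ (equivalently, a Hölder inequality in time combined with an $L^4$ Strichartz estimate). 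This gives the term $T^{1/4}\|u_1\|_{X^{0,1}_\varepsilon}\|u_2\|_{X^{0,1}_\varepsilon}$, and adding the three cases yields \eqref{eqlemme2}.

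The main obstacle I anticipate is the bookkeeping of the dyadic and modulation summations in the two extreme configurations --- the interaction of two comparable high frequencies producing a comparably high output, $N\sim N_1\sim N_2$, and the opposite extreme of an $O(1)$ output with large inputs --- where $\langle L_{\max}\rangle^{-1/2}$ is used essentially at its sharp strength, leaving no slack: there the dyadic modulation sums must be organized so that the $\langle\sigma_i\rangle$‑weights of the $X^{0,1}_\varepsilon$‑norms (or, if one insists on $\|\partial_x u_i\|_{L^2_{tx}}$, a small $L^{0+}$ margin and the $\ell^1$‑structure of the output space) exactly absorb the logarithmic losses, and one must keep all constants independent of $\varepsilon$ --- which is automatic here, since \eqref{strong} is the only structural input used and the $P_{A_\varepsilon}$‑cutoff on the output plays no role in this region.
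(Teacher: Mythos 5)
Your overall skeleton is the same as the paper's: split according to which of $\sigma,\sigma_1,\sigma_2$ is dominant, and convert the strong resonance \eqref{strong} into a gain of $|\xi\xi_1(\xi-\xi_1)|^{-1/2}$ or $|\xi\xi_1(\xi-\xi_1)|^{-1}$ depending on whether the output or an input modulation dominates. The implementation differs: you run a pure $L^2$ dyadic box-estimate/power-counting argument, whereas the paper uses mixed-norm H\"older estimates ($L^\infty_{tx}$ Sobolev bounds on one factor against $L^2_{tx}$ or $X^{0,1}$ on the other), which in particular avoids having to decompose in modulation the factor that carries no $\langle\sigma\rangle$-weight.

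There is, however, a genuine gap in the case where the output modulation dominates, in the low--high regime $8\lesssim N_1\ll N\sim N_2$. By your own accounting, the box estimate contributes $N_{thd}^{1/2}=N_1^{1/2}$, which is spent entirely against the $N_{thd}^{-1/2}=N_1^{-1/2}$ coming from $\langle\sigma\rangle^{-1/2}\lesssim(N_{thd}N_{\max}^2)^{-1/2}$, and the residual $(N/N_{\max})^2$ is $O(1)$ here since $N\sim N_{\max}$. So after placing one derivative on $u_2$ there is exactly zero surplus decay in $N_1$, and the sum $\sum_{8\le N_1\ll N}\|P_{N_1}u_1\|_{X^{0,1}_\varepsilon}$ against the $\ell^2_N$ structure of $\|\partial_x u_2\|_{L^2_{tx}}$ diverges logarithmically; your claim that ``the remaining negative powers of $N_{thd}$ \ldots make the dyadic summations converge'' is therefore not justified in this configuration. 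The paper isolates precisely this subregion ($|\xi_1|\le 2^{-7}|\xi|$ with $\sigma_{\max}=\sigma$) and recovers a summability factor $N_1^{-1/4}$ by estimating $\|P_{N_1}D_x^{-1/2}u_1\|_{L^\infty_{tx}}\lesssim N_1^{-1/4}\|P_{N_1}D_x^{1/4}u_1\|_{L^\infty_tL^2_x}$, i.e.\ by not using the resonance at full strength on $u_1$ and instead paying the interpolated norm $\|u_1\|_{X^{1/4,3/4}}\lesssim\|u_1\|_{X^{0,1}}+\|\partial_xu_1\|_{L^2_{tx}}$. Your scheme needs an analogous interpolation between the two norms available on the right-hand side of \eqref{eqlemme2}; without it the estimate closes only with an $N_1^{0+}$ or $\log$ loss. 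A secondary caveat: your proposed fallback on a Kenig--Ponce--Vega type bilinear smoothing estimate is dangerous here, since that estimate gains a power of $|\phi_\varepsilon'(\xi_1)-\phi_\varepsilon'(\xi_2)|$ and is not uniform in $\varepsilon$ when an input frequency sits near the stationary point $|\xi|\sim\sqrt{3/(5\varepsilon)}$ --- and in this lemma only the output, not the inputs, is localized by $P_{A_\varepsilon}$.
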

\begin{proof} Again we notice that the norms in the right-hand side of \eqref{lemme2}   only see the size of the modulus of the Fourier transforms. We can thus assume that all our functions have non-negative Fourier transforms.
 We set $ I:=  \|\partial_x P_{A_\varepsilon}P_{\ge 8}\Lambda(P_{\ge 8} u_1,u_2) \|_{X^{1,-1/2,1}_\varepsilon }  $  and separate different subregions .\\
  $\bullet $ $|\sigma_1|\ge 2^{-5}  |\xi \xi_1 (\xi-\xi_1) |$. Then direct calculations  give
  \begin{eqnarray*}
  I & \lesssim  & T^{\frac{1}{2}-}  \| u_1 \|_{X^{0,1}} \| D_x^{-1} P_{\ge 2}u_2 \|_{L^\infty_{tx}}\\
  & \lesssim &  T^{\frac{1}{2}-}  \| u_1 \|_{X^{0,1}} \| u_2 \|_{X^{0,1}} \; .
  \end{eqnarray*}
   $\bullet $ $|\sigma_2|\ge 2^{-5}  |\xi \xi_1 (\xi-\xi_1) |$. This case can be treated exactly in the same way by exchanging the role of
   $u_1 $ and $ u_2 $.\\
     $\bullet $ $|\sigma|\ge 2^{-5}  |\xi \xi_1 (\xi-\xi_1) | $ and $ \max (|\sigma_1|, \sigma_2|) \le  2^{-5}  |\xi \xi_1 (\xi-\xi_1) |$. \\
     Then we separate two subregions. \\
   1. $ |\xi_1|\ge 2^{-7} |\xi| $. Then $ |\xi_1|Ê\gtrsim |\xi_{max}| $ and taking $ \delta>0 $ close enough to $ 0 $ we get
    \begin{eqnarray*}
  I & \lesssim  & \|\partial_x P_{A_\varepsilon}P_{\ge 8}\Lambda(P_{\ge 8} u_1,u_2) \|_{X^{1,-1/2+\delta}_\varepsilon } \\
  & \lesssim &  \Bigl\| \partial_x u_2\, D_x^{-1/2+3\delta}P_{\ge 8} u_1  \Bigr\|_{L^2} \\
  & \lesssim &  \|D_x^{-1/2+3\delta}P_{\ge 8} u_1\|_{L^\infty_{tx}} \|\partial_x u_2 \|_{L^2_{tx}} \\
  & \lesssim &     \|u_1\|_{X^{1/4,3/4}}  \|\partial_x u_2 \|_{L^2_{tx}} \\
    & \lesssim &    ( \|u_1\|_{X^{0,1}} + \|\partial_x u_1 \|_{L^2_{tx}} )  \|\partial_x u_2 \|_{L^2_{tx}} \; .
  \end{eqnarray*}
2. $ |\xi_1|\le 2^{-7} |\xi| $.  Then, we  notice that  in this region $\frac{1}{2} |\xi|\le |\xi-\xi_1| \le 2  |\xi|Ê$ and thus
$$
(1-2^{-6}) \xi^2 \le \xi^2-\xi_1 (\xi-\xi_1) \le (1+2^{-6})\xi^2 \; .
$$
Since $ \eta_{A_\varepsilon} $ does vanish on   $ \Bigl\{|\xi|\in \Bigr[ \frac{15}{16} \sqrt{\frac{3}{5\varepsilon}},\frac{17}{16} \sqrt{\frac{3}{5\varepsilon}} \Bigr]\Bigr\} $, we deduce from \eqref{resonance} that
\begin{equation} \label{vz}
|\sigma| \sim \max\Bigl( |\xi \xi_1 (\xi-\xi_1) |, \varepsilon |\xi^3 \xi_1 (\xi-\xi_1) |\Bigr)
\end{equation} 
on the support of $  \eta_{A_\varepsilon} $. We thus can write
   \begin{eqnarray*}
  I^2 & \lesssim & \sum_{N\ge 4} \Bigl( \sum_{4\le N_1\le 2^{-5}N}
   \Bigl\| \eta_{N}(\xi) \eta_{A_\varepsilon}(\xi)  |\xi|Ê \chi_{\{|\sigma|\sim  \max(N_1 N^2 ,  \varepsilon N^4 N_1 ) \} }{\mathcal F}_x \Bigl(
    \Lambda(P_{N_1} u,  u_2)\Bigr) \Bigr\|_{X^{1,-1/2,1}_\varepsilon }\Bigr)^2 \\
     & \lesssim  &  \sum_{N\ge 4} \Bigl( \sum_{4\le N_1\le 2^{-5}N} \| P_{N_1} D_x^{-1/2}u_1 \|_{L^\infty_{tx}} \|\chi_{\{|\xi|\sim N\}} \xi  \,\widehat{u_2}\|_{L^2_{ \tau,\xi}}\Bigr)^2 \\
& \lesssim  &    \sum_{N\ge 4}   \|\chi_{\{|\xi|\sim N\}} \xi  \,\widehat{u_2}\|_{L^2_{ \tau,\xi}}^2 \Bigl( \sum_{4\le N_1\le 2^{-5}N} N_1^{-1/4} \| P_{N_1} D_x^{1/4}u_1 \|_{L^\infty_{t}L^2_x}   \Bigr)^2 \\
  & \lesssim &     \|u_1\|_{X^{1/4,3/4}}^2  \|\partial_x u_2 \|_{L^2_{tx}}^2 \\
    & \lesssim &    ( \|u_1\|_{X^{0,1}} + \|\partial_x u_1 \|_{L^2_{tx}} )^2  \|\partial_x u_2 \|_{L^2_{tx}}^2 \; .
  \end{eqnarray*}
  \end{proof}
  \noindent
{\bf Proof of the bilinear estimate \eqref{bilinear}} \\
First, according to \eqref{eqeq1} and Lemma \ref{lemme1}Ê and  to the support of $ \eta_{A_\varepsilon} $ it suffices to consider
$$
I:=\Bigr[\sum_{N\ge 4 } N^2 \Bigl( \sum_{L} L^{-1/2}\Big\| \eta_{L}(\sigma) \eta_{N}(\xi) \int_{\R^2} \sum_{N_1\wedge N_2\ge 8} \widehat{P_{N_1} u_1}Ê(\xi_1,\tau_1)
 \widehat{P_{N_2} u_2}(\xi_2,\tau_2) \, d\tau_1 \, d\xi_1 \Bigr\|_{L^2_{\tau,\xi} (|\xi|\not \in J_\varepsilon)}\Bigr)^{2}\Bigr]^{1/2}
\; ,
 $$
 where  
 \begin{equation}\label{defJ}
 J_\varepsilon=[ \frac{15}{16} \sqrt{\frac{3}{5\varepsilon}},\frac{17}{16} \sqrt{\frac{3}{5\varepsilon}} ],
 \quad  \tau_2= \tau-\tau_1 \; \mbox{ and }\;  \xi_2=\xi-\xi_1 \; .
 \end{equation}
Now we will decompose the region of integration Ê into different regions and we will check that in most of these regions the strong resonance relation
 \eqref{strong} holds.  By symmetry we can assume that $ N_1 \le N_2 $. For the remaining it is convenient to introduce the function
  $$
  \Gamma(\xi,\xi_1) :=\Bigl|3-5 \varepsilon \Bigl( \xi^2-\xi_1(\xi-\xi_1)\Bigr)\Bigr|
  $$
   which is  related to the resonance relation \eqref{resonance}. 
    \begin{enumerate}
 \item[1.]    $ N_1< 2^{-10} N_2 $. Then  it holds 
 $$
 (1-2^{-7})\xi^2 \le \xi^2 -\xi_1(\xi-\xi_1) \le (1+2^{-7})\xi^2 \
 $$
 and it  is  easy to check that $  \Gamma(\xi,\xi_1) \ge 2^{-5} $ as soon as $ |\xi|\not \in  J_\varepsilon $. According to \eqref{resonance} this ensures that \eqref{strong} holds.
 \item[2.]  $ N_1\ge  2^{-10} N_2 $.
 \begin{enumerate}
\item[2.1.]  The subregion $|\xi|\not\in  \Bigl[ \sqrt{\frac{17}{80 \varepsilon}}, \sqrt{\frac{2}{5 \varepsilon}} \Bigr] $. In this region, by \eqref{linear2}Ê of Lemma \ref{linear} and duality,
  we get
 \begin{eqnarray*}
 I & \lesssim & \sum_{\min(4,Ê2^{-10}N_2)<N_1\le N_2}\| D_x^{-\frac{1}{4}+} \partial_x^2( P_{N_1} u_1 P_{N_2} u_2) \|_{L^{\frac{4}{3}+}_t L^{1+}_x} \\
 & \lesssim &  \sum_{\min(4,Ê2^{-10}N_2)<N_1\le N_2} T^{\frac{3}{4}-}
  N_2^{-\frac{1}{4}+}\| \partial_x P_{N_1} u_1 \|_{L^\infty_t L^2_x}  \|   \partial_x P_{N_2} u_2 \|_{L^\infty_t L^{2+}_x}\\
 & \lesssim & T^{\frac{3}{4}-} \|u_1 \|_{L^\infty_t H^1}\|u_2 \|_{L^\infty_t H^1}\; .
 \end{eqnarray*}
 \item[2.2.] The subregion $|\xi|\in  \Bigl[ \sqrt{\frac{17}{80 \varepsilon}}, \sqrt{\frac{2}{5 \varepsilon}} \Bigr] $.
 \begin{enumerate}
  \item[2.2.1] The subregion  $ |\xi_1|\wedge |\xi_2| \le \sqrt{\frac{17}{80 \varepsilon}} $. Since both cases can be treated in the same way, 
  we assume $  |\xi_1|\wedge |\xi_2|=|\xi_1| $. Then, according to  \eqref{linear2}  and the support of $ \eta_{A_\varepsilon} $ and  Ê$ \eta_{B_\varepsilon} $, we get
  \begin{eqnarray*}
 I & \lesssim &  \sum_{\min(4,Ê2^{-10}N_2)<N_1\le N_2}
 T^{\frac{1}{2}-}\|  \partial_x^2( P_{B_\varepsilon}P_{A_\varepsilon}  P_{N_1} u_1 P_{N_2}u_2) \|_{L^2_{tx}} \\
 & \lesssim &  T^{\frac{1}{2}-} \sum_{\min(4,Ê2^{-10}N_2)<N_1\le N_2} 
\| P_{B_\varepsilon}P_{A_\varepsilon}   \partial_x P_{N_1} u_1 \|_{L^4_t L^\infty_x}  \|  \partial_x P_{N_2} u_2 \|_{L^\infty_t L^{2}_x}\\
  & \lesssim &   T^{\frac{1}{2}-}\sum_{\min(4,Ê2^{-10}N_2)<N_1\le N_2} N_1^{-1/4}\|P_{A_\varepsilon}    P_{N_1} u_1 \|_{X^{1,1/2,1 }}  \|  \partial_x P_{N_2} u_2 \|_{L^\infty_t L^{2}_x}\\
 & \lesssim & T^{\frac{1}{2}-} \|P_{A_\varepsilon} u_1 \|_{X^{1,1/2,1 }}\|u_2 \|_{L^\infty_t H^1}\; .
 \end{eqnarray*}
\item[2.2.2] The subregion $ |\xi_1|\wedge |\xi_2| > \sqrt{\frac{17}{80 \varepsilon}} $.  In this subregion we claim that 
 \eqref{strong} holds. 
Indeed, on one hand, if    $ \xi_1 \xi_2 \ge 0 $ then   $ \xi^2 - \xi_1 \xi_2 \le \xi^2 \le \frac{2}{5\varepsilon} $ and thus
 $  \Gamma(\xi,\xi_1)\ge 1  $. On the other hand, if  $ \xi_1 \xi_2 \le 0 $ then, since  $| \xi| \ge \sqrt{\frac{17}{80 \varepsilon}}$,  we must have
 $|\xi_1|\vee|\xi_2| \ge 2  \sqrt{\frac{17}{80 \varepsilon}} $. Therefore, $ \xi^2-\xi_1\xi_2 \ge 3 \frac{17}{80 \varepsilon} $ and thus
   $\Gamma(\xi,\xi_1)\ge \frac{3}{16} $ which ensures that \eqref{strong} holds and completes the proof of \eqref{bilinear}.
 \end{enumerate}
 \end{enumerate}
 \end{enumerate}
 \section{Uniform estimate close to the stationary point of the phase function}\label{section3}
 As announced in the introduction, close the the stationary point of the phase function we will apply the approach developed by Koch and Tzvetkov in \cite{KT1}. Note that, in  \cite{KK},Ê Kenig and Koenig improved this approach by adding the use of the nonlinear local Kato smoothing effect. However, this improvement can not be used here  since this smoothing effect is not uniform in $ \varepsilon $ close to  the stationary point.\\
 \begin{proposition}\label{propo2}
 Let   $ s\ge 1 $ and $ u_{\varepsilon} \in C([0,T]; H^s(\R)) $, $i=1,2$,  be a  solution to $(K_\varepsilon) $  with initial data $ \varphi $. Then it holds
 \begin{equation}\label{est1propo2}
 \|\|P_{\complement A_\varepsilon} u_{\varepsilon}  \|_{L^\infty_T H^s_x}^2 \lesssim \|P_{\complement A_\varepsilon} \varphi\|_{H^s}^2 +
 (\varepsilon^{1/2}+T^{1/4})
  \|u_{\varepsilon} \|_{Y_{\varepsilon,T}^s}^2 \Bigl(  \|u_{\varepsilon} \|_{Y_{\varepsilon,T}^1} +  \|u_{\varepsilon} \|_{Y_{\varepsilon,T}^1}^2)
 \end{equation}
  where $ Y_{\varepsilon,T}^sÊ$ is defined in \eqref{defY} and 
 $ {\mathcal F}_x (P_{\complement A_\varepsilon}\varphi)= (1-\eta_{A_\varepsilon}){\mathcal F}_x \varphi $.
 \end{proposition}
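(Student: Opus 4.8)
The plan is to implement the energy method of Koch and Tzvetkov \cite{KT1}, exploiting that $\widehat{P_{\complement A_\varepsilon}u_\varepsilon}$ is supported in the thin annulus $\mathcal B_\varepsilon:=\{\xi:\ \big||\xi|-\sqrt{3/(5\varepsilon)}\big|\le 3/(40\sqrt\varepsilon)\}$, on which $\phi_\varepsilon''$ stays bounded away from $0$ (it vanishes only at $|\xi|=\sqrt{3/(10\varepsilon)}$). Set $v:=P_{\complement A_\varepsilon}u_\varepsilon$, so $v(0)=P_{\complement A_\varepsilon}\varphi$. Applying $P_{\complement A_\varepsilon}$ to $(K_\varepsilon)$ and using that $\partial_x^3+\varepsilon\partial_x^5$ is skew-adjoint in $L^2$, one gets $\frac{d}{dt}\|J^s_x v(t)\|_{L^2}^2=-2\int_{\R}J^s_x v\cdot J^s_x P_{\complement A_\varepsilon}(u_\varepsilon\partial_x u_\varepsilon)\,dx$, hence
\begin{equation*}
\|v\|_{L^\infty_T H^s}^2\le\|P_{\complement A_\varepsilon}\varphi\|_{H^s}^2+2\sup_{0\le t\le T}\Bigl|\int_0^t\!\!\int_{\R}J^s_x v\cdot J^s_x P_{\complement A_\varepsilon}(u_\varepsilon\partial_x u_\varepsilon)\,dx\,dt'\Bigr|,
\end{equation*}
and it remains to bound the space-time integral by the second term on the right-hand side of \eqref{est1propo2}.

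Two elementary facts coming from the support condition on $\widehat v$ organize the computation. First, $(J^s_x v)^2$ has Fourier support in $\mathcal B_\varepsilon+\mathcal B_\varepsilon$, which stays away from $\pm\sqrt{3/(10\varepsilon)}$; hence the Strichartz estimate \eqref{linear2} of Lemma \ref{linear} — available precisely through $P_{B_\varepsilon}$, i.e.\ away from $\sqrt{3/(10\varepsilon)}$, and, after the usual transfer, in the spaces $X^{1,1/2,1}_\varepsilon$ — applies uniformly in $\varepsilon$ to any factor paired against $(J^s_x v)^2$. Second, $0\notin\mathcal B_\varepsilon+\mathcal B_\varepsilon+\mathcal B_\varepsilon$ (three numbers each of modulus $\sim\sqrt{3/(5\varepsilon)}$ cannot sum to zero), so $\int_{\R}(\partial_x v)(J^s_x v)^2\,dx=0$. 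Writing, with $u=u_\varepsilon$,
\begin{equation*}
\int_{\R}J^s_x v\cdot J^s_x P_{\complement A_\varepsilon}(u\partial_x u)=-\tfrac12\!\int_{\R}(\partial_x u)(J^s_x v)^2+\int_{\R}J^s_x v\cdot[J^s_x,u]\partial_x v+\int_{\R}J^s_x v\cdot J^s_x[P_{\complement A_\varepsilon},u]\partial_x u,
\end{equation*}
the first term equals $-\tfrac12\int_{\R}(\partial_x P_{A_\varepsilon}u)(J^s_x v)^2$ by the vanishing above: its frequencies $\lesssim 1$ are controlled by Bernstein and $\|u\|_{L^\infty_T H^1}$ and the higher ones by \eqref{linear2} (whose relevant frequencies avoid $\sqrt{3/(10\varepsilon)}$), giving $\lesssim T^{1/4}\|u\|_{Y_{\varepsilon,T}^1}\|u\|_{Y_{\varepsilon,T}^s}^2$. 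In the second term only the part of $u$ at frequencies $\ll\varepsilon^{-1/2}$ yields a genuine Kato-Ponce commutator gain and is again closed via \eqref{linear2}. The third term is where the prefactor $\varepsilon^{1/2}$ is created: by the symbol bound $|\eta_{A_\varepsilon}(\xi)-\eta_{A_\varepsilon}(\xi-\xi_1)|\lesssim\min(1,\sqrt\varepsilon\,|\xi_1|)$ recorded in Section \ref{section2}, $[P_{\complement A_\varepsilon},u]\partial_x u$ is, up to harmless factors, $\sqrt\varepsilon$ times a bilinear expression in $\partial_x u$, so this contribution is $\lesssim\varepsilon^{1/2}\|u\|_{Y_{\varepsilon,T}^s}^2(\|u\|_{Y_{\varepsilon,T}^1}+\|u\|_{Y_{\varepsilon,T}^1}^2)$.

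The real difficulty lies in the high-high interactions, where two factors of size $\sim\varepsilon^{-1/2}$ combine into an output still in $\mathcal B_\varepsilon$. There $J^s_x$ and the extra $\partial_x$ cost a full power $\varepsilon^{-3/2}$ (when $s=1$) that neither Strichartz nor the symbol-commutator gain recovers by itself, and some intermediate frequencies now coincide with $\pm\sqrt{3/(10\varepsilon)}$, where \eqref{linear2} fails. The remedy is the resonance identity \eqref{resonance}: when both input frequencies are comparable to $\varepsilon^{-1/2}$ and the output lies near $\sqrt{3/(5\varepsilon)}$, a short computation (analogous to the one in Section \ref{section2} for the subregion where both inputs have size $\gtrsim\varepsilon^{-1/2}$) gives $\big|3-5\varepsilon(\xi^2-\xi_1\xi_2)\big|\gtrsim 1$, hence $|\sigma_{\max}|\gtrsim|\xi\xi_1(\xi-\xi_1)|\sim\varepsilon^{-3/2}$; this large modulation is absorbed by the fact that $u_\varepsilon$, being a solution, lies in $X^{0,1}_{\varepsilon,T}$ by Lemma \ref{oo}, and it is exactly this use of Lemma \ref{oo} — whose right-hand side carries the product $\|u\|_{L^\infty_T H^1}\|u\|_{L^\infty_T H^s}$ — that produces the quartic term $T^{1/4}\|u\|_{Y_{\varepsilon,T}^s}^2\|u\|_{Y_{\varepsilon,T}^1}^2$ in \eqref{est1propo2}. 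Making this last step quantitatively tight, so that only the clean prefactor $\varepsilon^{1/2}+T^{1/4}$ survives with no residual negative power of $\varepsilon$, is the crux; the rest is dyadic bookkeeping built on the two mechanisms — Strichartz away from $\sqrt{3/(10\varepsilon)}$, modulation gain near it.
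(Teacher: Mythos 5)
Your overall frame (an energy identity for $v=P_{\complement A_\varepsilon}u_\varepsilon$ plus frequency-support observations) is reasonable, but the proof as written has two genuine gaps, and in both places the paper's actual argument is different and more elementary. First, the prefactor $\varepsilon^{1/2}$ does not come from the commutator symbol bound. The estimate $|\eta_{A_\varepsilon}(\xi)-\eta_{A_\varepsilon}(\xi-\xi_1)|\lesssim\min(1,\sqrt\varepsilon|\xi_1|)$ does gain $\sqrt\varepsilon$ per unit of frequency of the low factor, but the symbol difference is supported where the output --- and hence the factor carrying the extra $\partial_x$ --- has frequency $\sim\varepsilon^{-1/2}$; that derivative costs $\varepsilon^{-1/2}$ back, so the net size of $[P_{\complement A_\varepsilon},u]\partial_x u$ is $O(1)$, not $O(\varepsilon^{1/2})$, and for $|\xi_1|\gtrsim\varepsilon^{-1/2}$ the minimum saturates and there is no gain at all. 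In the paper the smallness $\varepsilon^{1/2}+T$ is produced by Lemma \ref{carlos}: the Koch--Tzvetkov slicing of $[0,T]$ into $O(1+T\varepsilon^{-1/2})$ intervals of length $\varepsilon^{1/2}$, combined with the Strichartz bound \eqref{linear2} on each interval, yields $\|P_{\complement A_\varepsilon}u_x\|_{L^1_TL^\infty_x}\lesssim(\varepsilon^{1/2}+T)\|u\|_{L^\infty_TH^1}+T\|u\|^2_{L^\infty_TH^1}$ (this is \eqref{carlos2}). Your proposal never invokes this estimate, yet it is the engine of the whole proposition: the energy inequality of Lemma \ref{lemma32} reduces everything to $\|P_{B_\varepsilon}u_x\|_{L^1_TL^\infty_x}\,\|u\|^2_{L^\infty_TH^s}$, and that norm is then split into its $P_{A_\varepsilon}$ part, controlled via \eqref{linear2} transferred to $X^{1,1/2,1}_\varepsilon$ and giving $T^{1/4}\|u\|_{Y^1_{\varepsilon,T}}$, and its $P_{\complement A_\varepsilon}$ part, controlled by the slicing.

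Second, the step you yourself call the crux --- the high-high interactions --- is left open, and it in fact requires neither the resonance identity, nor Bourgain-space modulation analysis, nor Lemma \ref{oo}. In the paper's Lemma \ref{lemma32} these terms (the integral $I_2$, built from $P_{\complement B_\varepsilon}u$) are disposed of by pure Bernstein bookkeeping: every factor, including the output through $P_{\complement A_\varepsilon}$, is localized at $|\xi|\sim\varepsilon^{-1/2}$, so the loss $\varepsilon^{-(s+1)/2}$ from $J^s_x\partial_x$ is exactly recovered by placing two factors in $L^2$ at regularity $s$ and the third in $L^\infty$ with one derivative; the contribution is again $\lesssim\|P_{\complement A_\varepsilon}u_x\|_{L^\infty_x}\|u\|^2_{H^s}$ and feeds into the same $L^1_TL^\infty_x$ norm. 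The resonance relation \eqref{resonance} and the $X^{s-1,1}_\varepsilon$ information of Lemma \ref{oo} are the tools of Proposition \ref{pro1} in Section \ref{section2}, not of this statement. As written, your argument neither produces the stated prefactor nor completes the case you identify as hardest, so it does not yet constitute a proof of \eqref{est1propo2}.
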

 First we establish an  estimate,  uniform in $ \varepsilon $,  on the solution to the associated non homogenous linear problem.
 \begin{lemma}\label{carlos}
Let $ v\in C([0,T];H^{\infty}(\R))$ be a solution of
\begin{equation}\label{Eqnonhom}
v_t+v_{xxx} +\varepsilon v_{5x}  = -F_x \quad .
\end{equation}
Then
\begin{equation}\label{estim1}
\| P_{\complement A_\varepsilon} v\|_{L^{1}_T L^\infty_{x}} \lesssim (\varepsilon^{1/2}+T)
\|P_{\complement A_\varepsilon} \,v\|_{L^\infty_T\, L^2_{x}}
+\|P_{\complement A_\varepsilon} F\|_{L^1_{T}L^2_{x}}\quad .
\end{equation}
\end{lemma}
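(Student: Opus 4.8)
The plan is to run the Koch--Tzvetkov scheme adapted to the degenerate frequency regime: reduce to frequencies $|\xi|\sim\varepsilon^{-1/2}$, establish there a Strichartz-type estimate carrying the sharp power of $\varepsilon$, and iterate it by Duhamel's formula on time intervals of length $\sim\varepsilon^{1/2}$. Since $P_{\complement A_\varepsilon}$ commutes with $\partial_x$ and with $U_\varepsilon(t)$, the function $w:=P_{\complement A_\varepsilon}v$ solves $w_t+w_{xxx}+\varepsilon w_{5x}=-(P_{\complement A_\varepsilon}F)_x$, so we may assume throughout that $v$ and $F$ have Fourier support in the band $\mathcal R_\varepsilon:=\{\,||\xi|-\sqrt{3/(5\varepsilon)}|\le\tfrac{3}{40\sqrt\varepsilon}\,\}=\supp\eta_{\complement A_\varepsilon}$; in particular $|\xi|\sim\varepsilon^{-1/2}$ on the Fourier support of $v$ and $F$. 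A direct check with the explicit cutoffs shows that $\mathcal R_\varepsilon$ sits inside $\{\eta_{B_\varepsilon}\equiv 1\}$ and stays at distance $\gtrsim\varepsilon^{-1/2}$ from the zero set $\{|\xi|=\sqrt{3/(10\varepsilon)}\}$ of $\phi_\varepsilon''$.

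The key point is the uniform Strichartz estimate: for every $g$ with $\supp\widehat g\subseteq\mathcal R_\varepsilon$,
$$\|U_\varepsilon(t)g\|_{L^4_tL^\infty_x}\lesssim\varepsilon^{1/8}\|g\|_{L^2}.$$
To obtain it, insert a smooth cutoff $\Pi_\varepsilon$ equal to $1$ on $\mathcal R_\varepsilon$ and supported in $\{|\xi|\sim\varepsilon^{-1/2}\}\cap\{\eta_{B_\varepsilon}\equiv1\}$, so that $\Pi_\varepsilon=\Pi_\varepsilon P_{B_\varepsilon}$ and $U_\varepsilon(t)g=\Pi_\varepsilon U_\varepsilon(t)g$. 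Because $\supp\Pi_\varepsilon\subset\{|\xi|\sim\varepsilon^{-1/2}\}$, the operator $D_x^{-1/4}$ restricted to this band has a convolution kernel of $L^1_x$-norm $O(\varepsilon^{1/8})$ by scaling, whence $\|\Pi_\varepsilon h\|_{L^\infty_x}\lesssim\varepsilon^{1/8}\|D_x^{1/4}\Pi_\varepsilon h\|_{L^\infty_x}$, while $\Pi_\varepsilon$ is bounded on $L^\infty_x$ uniformly in $\varepsilon$. Combining this with $\Pi_\varepsilon=\Pi_\varepsilon P_{B_\varepsilon}$ and estimate \eqref{linear2} gives the displayed bound.

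Now set $\delta:=\min(\varepsilon^{1/2},T)$ and split $[0,T]$ into $\lesssim 1+T/\delta$ consecutive intervals $I_j=[a_j,a_{j+1}]$ of length $\le\delta$. On $I_j$, Duhamel's formula reads $v(t)=U_\varepsilon(t-a_j)v(a_j)-\int_{a_j}^tU_\varepsilon(t-t')F_x(t')\,dt'$. Estimating the $L^1(I_j;L^\infty_x)$-norm by $\delta^{3/4}$ times the $L^4(I_j;L^\infty_x)$-norm, applying the Strichartz estimate above to the first term and, after Minkowski's inequality, to the Duhamel term, and using $\|F_x\|_{L^2}\lesssim\varepsilon^{-1/2}\|F\|_{L^2}$ (again from the frequency localization), one obtains
$$\|v\|_{L^1(I_j;L^\infty_x)}\lesssim\delta^{3/4}\varepsilon^{1/8}\|v(a_j)\|_{L^2}+\delta^{3/4}\varepsilon^{-3/8}\|F\|_{L^1(I_j;L^2)}.$$
Summing over $j$ and using $\delta\le\varepsilon^{1/2}$, which makes $\delta^{3/4}\varepsilon^{-3/8}\lesssim1$ and $(1+T/\delta)\,\delta^{3/4}\varepsilon^{1/8}=\delta^{3/4}\varepsilon^{1/8}+T\delta^{-1/4}\varepsilon^{1/8}\lesssim\varepsilon^{1/2}+T$, recovers \eqref{estim1}, recalling that $v=P_{\complement A_\varepsilon}v$ and $F=P_{\complement A_\varepsilon}F$.

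The only step where anything genuine has to be checked is the uniform Strichartz estimate: it survives the limit $\varepsilon\to0$ precisely because the band $\mathcal R_\varepsilon$ around the stationary point of $\phi_\varepsilon'$ avoids the zero of $\phi_\varepsilon''$ and, thanks to the specific constants chosen in $P_{A_\varepsilon}$ and $P_{B_\varepsilon}$, lies inside the region on which $P_{B_\varepsilon}$ is effective, which is exactly what \eqref{linear2} was built to exploit. The remaining steps amount to bookkeeping of the powers of $\varepsilon$ and $\delta$.
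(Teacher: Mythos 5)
Your argument is correct and follows essentially the same route as the paper: decompose $[0,T]$ into intervals of length $\lesssim\varepsilon^{1/2}$, apply Duhamel and the uniform Strichartz estimate \eqref{linear2} on each piece (using that $\eta_{B_\varepsilon}\equiv 1$ on $\supp\eta_{\complement A_\varepsilon}$), convert the $D_x^{\pm 1/4}$ and $\partial_x$ factors into powers of $\varepsilon$ via the frequency localization $|\xi|\sim\varepsilon^{-1/2}$, and sum over the $O(1+T\varepsilon^{-1/2})$ intervals. The only differences are presentational (you place the $\varepsilon^{1/8}$ gain on the $L^\infty_x$ side via a Bernstein-type kernel bound rather than writing $\|D_x^{-1/4}P_{\complement A_\varepsilon}v(a_j)\|_{L^2}$, and you make explicit the support verifications the paper leaves implicit), so no further comment is needed.
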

\begin{proof}
For $ 0<\varepsilon<<1  $ fixed, we write a natural splitting
$$
[0,T] =\cup I_j
$$
of $ [0,T]Ê$ where $I_j=[a_j,b_j]$ are with disjoint interiors and $ |I_j|\leq
\varepsilon^{1/2}$.
Clearly, we can suppose that the number of the intervals $I_j$ is bounded by 
$C(1+T \varepsilon^{-1/2})$.
Using the H\"older inequality in time, we can write
\begin{equation*}
\| v \|_{L^{1}_T L^\infty_{x}} \lesssim  \sum_j \|v\|_{L^{1}_{I_j} L^\infty_{x}}
\lesssim \varepsilon^{\frac{3}{8}}\,
  \sum_j \|v \|_{L^{4}_{I_j} L^\infty_{x}}\;  .
\end{equation*}
Next, we apply the Duhamel formula on each $ I_j $ to obtain 
$$
P_{\complement A_\varepsilon} v(t)= U_\varepsilon (t-a_j)  P_{\complement A_\varepsilon} v(a_j) -
 \int_{a_j}^t  U_\varepsilon (t-t')   P_{\complement A_\varepsilon}
\partial_x F(t') \, dt' \; .
$$
Using the uniform in $ \varepsilon $ Strichartz estimate \eqref{linear2}  and classical $ T T^* $ arguments, it yields
\begin{eqnarray*}
\|P_{\complement A_\varepsilon} v\|_{L^{4}_{I_j} L^\infty_{x}}  & \lesssim&
\|D^{-1/4}_x P_{\complement A_\varepsilon}  v(a_j)\|_{L^2} + \|
D_x^{3/4} P_{\complement A_\varepsilon} F \|_{L^1_{I_j} L^2_{x}} \\
 & \lesssim & \varepsilon^{1/8}\|P_{\complement A_\varepsilon}  v(a_j)\|_{L^2} +\varepsilon^{-3/8}  \|
 P_{\complement A_\varepsilon} F \|_{L^1_{I_j} L^2_{x}}\, .
\end{eqnarray*}
Therefore, we get
$$
\|P_{\complement A_\varepsilon} v\|_{L^{1}_{I_j} L^\infty_{x}} \lesssim
\varepsilon^{1/2}\|P_{\complement A_\varepsilon}  v(a_j)\|_{L^2} +  \|
 P_{\complement A_\varepsilon} F \|_{L^1_{I_j} L^2_{x}} 
$$
and summing over $j$,
\begin{eqnarray*}
\|P_{\complement A_\varepsilon} v \|_{L^{1}_{T} L^\infty_{xy}} & \lesssim & \varepsilon^{1/2} \sum_j  \|P_{\complement A_\varepsilon}  v\|_{L^\infty_T
L^2_{x}} +    \|
 P_{\complement A_\varepsilon} F \|_{L^1_{T} L^2_{x}}\, .
\\
& \lesssim & (\varepsilon^{1/2}+T) \| P_{\complement A_\varepsilon}  v \|_{L^\infty_T
L^2_{x}} +   \|
 P_{\complement A_\varepsilon} F \|_{L^1_{T} L^2_{x}}\, .
\end{eqnarray*}
\end{proof}
We now need the following  energy estimate
\begin{lemma}\label{lemma32} 
Let $ s\ge 1 $. There exists $ C>0 $ such that all $ 0<\varepsilon<<1 $ and all $ \varphi\in H^s(\R) $, the   solution $ u\in C(0,T;H^s)  $ of  $(K_\varepsilon) $ with initial data $ \varphi $  satisfies
\begin{equation} \label{energy}
\|P_{\complement A_\varepsilon} u \|_{L^\infty_T H^s}^2 \le \|P_{\complement A_\varepsilon} \varphi\|_{H^s}^2 +C\,\|P_{B_\varepsilon} u_x \|_{L^1_T L^\infty_x}
\|u\|_{L^\infty_T H^s}^2\; .
\end{equation}
\end{lemma}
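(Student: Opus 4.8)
The plan is to derive the energy estimate \eqref{energy} by differentiating the $H^s$-norm of the frequency-localized solution $P_{\complement A_\varepsilon}u$ in time and carefully estimating the nonlinear contribution, exploiting that $P_{\complement A_\varepsilon}$ commutes with the linear flow and is supported away from the stationary frequency. First I would apply the operator $\Lambda^s := J^s_x P_{\complement A_\varepsilon}$ to the equation $(K_\varepsilon)$, obtaining
\begin{equation*}
\partial_t (\Lambda^s u) + \partial_x^3 (\Lambda^s u) + \varepsilon \partial_x^5 (\Lambda^s u) + \Lambda^s(u u_x) = 0 ,
\end{equation*}
then take the $L^2_x$ inner product with $\Lambda^s u$. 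The contributions of the linear dispersive terms $\partial_x^3$ and $\varepsilon\partial_x^5$ vanish identically upon integration by parts (they are skew-adjoint), so that
\begin{equation*}
\frac12 \frac{d}{dt} \|\Lambda^s u\|_{L^2_x}^2 = -\int_{\R} \Lambda^s(u u_x)\, \Lambda^s u \, dx .
\end{equation*}

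The heart of the matter is bounding the right-hand side. The plan is to use the standard commutator trick: write $\Lambda^s(u u_x) = u\, \partial_x \Lambda^s u + [\Lambda^s, u]\partial_x u$ — but more precisely, since $\Lambda^s$ involves the frequency cut-off $P_{\complement A_\varepsilon}$, I would decompose $u u_x$ via Littlewood-Paley into high-high, high-low and low-high interactions. For the term where the derivative falls on the high-frequency factor carrying the $J^s_x P_{\complement A_\varepsilon}$ weight, one integrates by parts once more to move $\partial_x$ off, producing a factor $\|u_x\|_{L^\infty_x}$ times $\|\Lambda^s u\|_{L^2_x}^2$; for the remaining commutator terms one uses the fractional Leibniz / Kato–Ponce estimate to gain that the top-order derivative is again controlled by $\|u_x\|_{L^\infty_x}\|u\|_{H^s_x}^2$. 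The key point is that, because $P_{\complement A_\varepsilon}u$ has Fourier support outside $A_\varepsilon$, in the high-high interaction producing an output frequency \emph{inside} $A_\varepsilon$ (i.e. where $1-\eta_{A_\varepsilon}$ is nonzero at scale $\varepsilon^{-1/2}$) at least one input frequency is also of order $\varepsilon^{-1/2}$, so this piece is harmless; and in all relevant interactions the low-frequency factor can be replaced by $P_{B_\varepsilon}u$ up to the $P_{\le O(\varepsilon^{-1/2})}$ part, which is itself controlled by $\|u\|_{L^\infty_T H^s}$. After integrating in time over $[0,T]$ and using $\|\Lambda^s u(0)\|_{L^2_x} = \|P_{\complement A_\varepsilon}\varphi\|_{H^s}$, this yields \eqref{energy} with the $L^\infty_x$-norm of $u_x$ replaced, on the relevant Fourier range, by $\|P_{B_\varepsilon}u_x\|_{L^1_T L^\infty_x}$.

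The main obstacle I anticipate is bookkeeping the frequency interactions so that the $L^\infty_x$-norm that appears is genuinely only that of $P_{B_\varepsilon}u_x$ and not of the full $u_x$: one must show that the low-frequency (derivative-free after integration by parts) factor in the dangerous high-high term can be taken to have Fourier support in the region where $\eta_{B_\varepsilon}\equiv 1$, or else is a bounded-frequency piece controlled by Bernstein and $\|u\|_{L^\infty_T H^1}\subset \|u\|_{L^\infty_T H^s}$. Concretely, if the output frequency $\xi$ lies in $\supp(1-\eta_{A_\varepsilon})$, either $\xi$ is bounded (absorbed by the $\|u\|_{L^\infty_T H^s}^2$ factor via Sobolev embedding) or $|\xi|\sim \varepsilon^{-1/2}$ and then the lower input frequency, being at most comparable and bounded away from $\sqrt{3/(5\varepsilon)}$ only on a thin set, in fact lands where $P_{B_\varepsilon}$ acts as the identity — this is exactly why $P_{B_\varepsilon}$ (cutting at $\sqrt{3/(10\varepsilon)}$, strictly below $\sqrt{3/(5\varepsilon)}$) rather than $P_{A_\varepsilon}$ appears in the statement. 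Once this commutator/frequency analysis is organized, the rest is the routine fractional Leibniz estimate and integration in time, so I would keep the presentation of those parts brief.
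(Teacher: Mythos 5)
Your setup (applying $J^s_x P_{\complement A_\varepsilon}$ to the equation, pairing with $J^s_x P_{\complement A_\varepsilon}u$ in $L^2_x$, discarding the skew-adjoint dispersive terms, then integrating by parts and invoking a Kato--Ponce type commutator bound) coincides with the paper's first step, and you correctly isolate the real difficulty: obtaining $\|P_{B_\varepsilon}u_x\|_{L^\infty_x}$ rather than $\|u_x\|_{L^\infty_x}$. But your resolution of that difficulty fails. First, the symbol of $P_{\complement A_\varepsilon}$ is $\eta_0\bigl[20\sqrt{\varepsilon}\bigl(|\xi|-\sqrt{3/(5\varepsilon)}\bigr)\bigr]$, supported in the thin annulus $\bigl||\xi|-\sqrt{3/(5\varepsilon)}\bigr|\le \tfrac{3}{40\sqrt{\varepsilon}}$, so there is no bounded-output-frequency case to dispose of. More seriously, when the output frequency lies in this annulus the lower input frequency $\xi_1$ ranges over essentially all of $[0,|\xi|]$, and in particular can sit in the annulus around $\sqrt{3/(10\varepsilon)}$ where the symbol of $P_{B_\varepsilon}$ \emph{vanishes}; so the low-frequency input factor does not ``land where $P_{B_\varepsilon}$ acts as the identity,'' and you cannot replace $u_x$ by $P_{B_\varepsilon}u_x$ on that factor. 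Nor can that exceptional piece be absorbed into $\|u\|_{L^\infty_T H^s}$: Bernstein gives $\|P_{\complement B_\varepsilon}u_x\|_{L^\infty_x}\lesssim \varepsilon^{-1/4}\varepsilon^{(s-1)/2}\|u\|_{H^s_x}$, which is not uniform in $\varepsilon$ for $s=1$, the very case the paper needs.

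The paper closes this gap with a device you are missing: it decomposes $u=P_{B_\varepsilon}u+P_{\complement B_\varepsilon}u$ \emph{inside} the nonlinearity. The contribution of $\partial_x(P_{B_\varepsilon}u)^2$ is exactly your commutator argument and automatically yields $\|P_{B_\varepsilon}u_x\|_{L^\infty_x}\|u\|_{H^s_x}^2$. For every term containing a factor $P_{\complement B_\varepsilon}u$, the output constraint to $\supp(1-\eta_{A_\varepsilon})$ forces all three frequencies to be of size $\sim\varepsilon^{-1/2}$; one then bounds the product in $L^1_x$ by Cauchy--Schwarz (each input costing $\varepsilon^{s/2}\|u\|_{H^s_x}$), pays $\varepsilon^{-(s+1)/2}$ for $J^s_x\partial_x$, and places the $L^\infty_x$ norm on the \emph{outer} factor $J^s_xP_{\complement A_\varepsilon}u$. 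It is only for this outer factor that the support identity $P_{B_\varepsilon}P_{\complement A_\varepsilon}=P_{\complement A_\varepsilon}$ applies, giving $\|P_{\complement A_\varepsilon}u\|_{L^\infty_x}\lesssim\sqrt{\varepsilon}\,\|P_{B_\varepsilon}u_x\|_{L^\infty_x}$; the powers of $\varepsilon$ then combine to $\varepsilon^{s/2}\le1$. Without this step (or an equivalent one) your argument only proves \eqref{energy} with $\|u_x\|_{L^1_TL^\infty_x}$ on the right-hand side, which cannot be controlled uniformly in $\varepsilon$ near the stationary frequency and so is useless for the rest of the paper.
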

\begin{proof}
Applying the operator $ P_{\complement A_\varepsilon}  $ on $(K_\varepsilon) $
 and taking the $ H^s $-scalar product  with  $  P_{\complement A_\varepsilon}  u $ we get
$$
\frac{d}{dt} \|P_{\complement A_\varepsilon} u(t)\|_{H^s}^2  =   \int_{\R} J^s_x P_{\complement A_\varepsilon} \partial_x (u^2) J^s_x P_{\complement A_\varepsilon} u\; .
$$
Decomposing  $ u $ as $ u= P_{B_\varepsilon} u + P_{\complement B_\varepsilon} u $ we can rewrite the right-hand side member of the above equality  as  
$$
\int_{\R} J^s_x P_{\complement A_\varepsilon} \partial_x (P_{B_\varepsilon} u)^2  J^s_x P_{\complement A_\varepsilon} u
+ \int_{\R} J^s_x P_{\complement A_\varepsilon}\partial_x \Bigl( ( P_{\complement B_\varepsilon} u)^2
+ 2  P_{B_\varepsilon} u  P_{\complement B_\varepsilon} u\Bigr) J^s_x P_{\complement A_\varepsilon} u : =I_1+I_2 \; .
$$
In the sequel  we will need the following variant of the Kato-Ponce  commutator estimate ( \cite{KP}):
\begin{equation}\label{com}
\Bigl\| \Bigl[ J^s_x P_{\complement A_\varepsilon} ,f] g\Bigr\|_{L^2_x}\lesssim 
\|f_x\|_{L^\infty_x} \|g\|_{H^{s-1}_x}+ \|f\|_{H^s_x} \|g\|_{L^\infty_x} \; .
\end{equation}
Integrating by parts and applying the above commutator estimate we easily estimate the first term by
\begin{eqnarray*}
I_1 & = & 2 \int_{\R} P_{B_\varepsilon} u  \, \partial_x\Bigr(J^s_x P_{\complement A_\varepsilon} 
 P_{B_\varepsilon} u \Bigr) J^s_x P_{\complement A_\varepsilon} u
  + 2 \int_{\R} \Bigl[ J^s_x P_{\complement A_\varepsilon} , P_{B_\varepsilon} u] P_{B_\varepsilon} u_x \, J^s_x P_{\complement A_\varepsilon} u \\
  & \lesssim & \|P_{B_\varepsilon} u_x \|_{L^\infty_x} \|u \|_{H^s}^2 \; ,
\end{eqnarray*}
where, in the last step, we use that according to  the support localization of $ \eta $, 
\begin{equation} \label{supo}
P_{B_\varepsilon} P_{\complement A_\varepsilon}= P_{\complement A_\varepsilon}\; .
\end{equation}
For the second term, we notice that by the frequency projections, all the functions in the integral are supported in frequencies of order $ 1/\sqrt{\varepsilon}$.
Therefore, using Bernstein inequalities  we get 
\begin{eqnarray*}
I_2 & \lesssim &  \varepsilon^{-s-1/2}Ê \Bigl\|  P_{\complement A_\varepsilon} \Bigl( ( P_{\complement B_\varepsilon} u)^2
+ 2  {\mathcal F}^{-1}_x \Bigl(\chi_{\{|\xi|\sim \varepsilon^{-\frac{1}{2}}\}}  {\mathcal F}(P_{B_\varepsilon} u)\Bigr)  P_{\complement B_\varepsilon} u\Bigr)\Bigr\|_{L^1_x}    \| P_{\complement A_\varepsilon}  u \|_{L^\infty_x} \\
& \lesssim & \| P_{\complement A_\varepsilon}  u_x \|_{L^\infty_x} \| u\|_{H^s}^2\; .
\end{eqnarray*}
 \eqref{energy}  then follows  by integration in time, using again \eqref{supo}.
\end{proof}
\noindent
{\bf Proof of Proposition \ref{propo2}}
Applying \eqref{estim1}Ê to $ u_x $ with $ u $ solving $(K_\varepsilon) $ we get
\begin{eqnarray}
\| P_{\complement A_\varepsilon} u_x\|_{L^{1}_T L^\infty_{x}}  & \lesssim & (\varepsilon^{1/2}+T)
\|P_{\complement A_\varepsilon} \,u_x\|_{L^\infty_T\, L^2_{x}}
+\|P_{\complement A_\varepsilon} \partial_x(u^2)\|_{L^1_{T}L^2_{x}} \nonumber\\
 & \lesssim & (\varepsilon^{1/2}+T)
\|u\|_{L^\infty_T\, H^1_{x}} + T \| u\|_{L^\infty_T H^1_x}^2 \label{carlos2} \; .
\end{eqnarray}
 Therefore, gathering \eqref{energy}, \eqref{carlos2} and \eqref{linear2} we obtain
\begin{eqnarray*}
\|P_{\complement A_\varepsilon} u \|_{L^\infty_T H^s_x}^2 & \lesssim &   \|P_{\complement A_\varepsilon} u_0\|_{H^s}^2 +C\,
\|u\|_{L^\infty_T H^s_x}^2 \Bigl(  T^{1/4} \|P_{B_\varepsilon} P_{A_\varepsilon} u_x \|_{L^4_T L^\infty_x} +\| P_{\complement A_\varepsilon} u_x\|_{L^{1}_T L^\infty_{x}} \Bigr) \\
 &\lesssim & \|P_{\complement A_\varepsilon} u_0\|_{H^s}^2 +C\,
(\varepsilon^{1/2}+T^{1/4}) \|u\|_{L^\infty_T H^s_x}^2 \Bigl(  \|u\|_{Y_{\varepsilon,T}^1} +  \|u\|_{Y_{\varepsilon,T}^1}^2)\; ,
\end{eqnarray*}
which completes the proof of \eqref{est1propo2}. 
\qed \vspace{2mm} \\
\section{Proof of Theorem \ref{theo2}}
\subsection{Uniform bound on the solutions}
 Let $ u\in C^\infty(\R;H^\infty(\R)) $ be a solution of $(K_\varepsilon) $. Combining Propositions \ref{pro1} and \ref{propo2} we infer that for any $ s\ge 1 $ and $ T\in ]0,1[$, 
$$
\|u\|_{Y_{\varepsilon,T}^s}^2\le C\,  \| \varphi\|_{H^s}^2 +C\,  (\sqrt{\varepsilon}+T^{1/4})\| u\|_{Y_{\varepsilon,T}^s}^2 \| u\|_{Y_{\varepsilon,T}^1}
\Bigl(1+\| u\|_{Y_{\varepsilon,T}^1}^3 \Bigr)\; ,
$$
for some constant $ C>0 $.
Since $ u $ is smooth, $ T\mapsto \|u\|_{Y_{\varepsilon,T}^s} $ is continuous and 
$\displaystyle \limsup_{T\searrow 0} \|u\|_{Y_{\varepsilon,T}^s} \lesssim\| \varphi \|_{H^s} $.  Therefore a classical continuity argument ensures that for  any $ \delta>0 $ there exists $ \alpha>0 $ such that
\begin{equation} \label{ji}
C   (\sqrt{\varepsilon}+T^{1/4})\| u\|_{Y_{\varepsilon,T}^s}^2 \| u\|_{Y_{\varepsilon,T}^1}
\Bigl(1+\| u\|_{Y_{\varepsilon,T}^1}^3 \Bigr)\le \delta
\end{equation}
and  $ \|u\|_{Y_{\varepsilon,T}^s}\lesssim \| \varphi\|_{H^s} $ provided
\begin{equation}\label{condinitial}
  (\sqrt{\varepsilon}+T^{1/4}) \le \alpha (\| \varphi\|_{H^1}+\| \varphi\|_{H^1}^4)^{-1}\; .
  \end{equation}
  By continuity with respect to initial data (for any fixed $ \varepsilon >0 $) it follows that for any fixed initial data $ \varphi\in H^s(\R) $, $ s\ge 1 $,  the emanating solution $ u\in C(\R;H^s(\R)) $ of $(K_\varepsilon) $, with 
  \begin{equation}\label{eps}
  0< \varepsilon \le \varepsilon_0(\|\varphi\|_{H^1}):=\frac{\alpha^2}{4} (\| \varphi\|_{H^1}+\| \varphi\|_{H^1}^4)^{-2}\; ,
  \end{equation}
  satisfies
  \begin{equation}\label{gf}
  \|u\|_{Y_{\varepsilon,T}^s}\lesssim \| \varphi\|_{H^s} \; ,
  \end{equation}
  with $ T=T(\|\varphi\|_{H^1})\sim (\| \varphi\|_{H^1}+\| \varphi\|_{H^1}^4)^{-4} $.\\
Finally, the result for $ \varepsilon\in [ \varepsilon_0(\|\varphi\|_{H^1}), 1] $ follows from a dilation argument. Indeed, it is easy to check that $ u $ is a solution of $(K_\varepsilon) $ with initial data $ \varphi $
 if and only if  $ u_\lambda=u_\lambda(t,x)=\lambda^{-2} u(\lambda^{-3}t, \lambda^{-1} x) $ is a solution of $ (K_{\lambda^{2} \varepsilon}) $ with initial data
  $ \varphi_{\lambda}=\lambda^{-2} \varphi(\lambda^{-1}x) $. Hence, taking $ \lambda=\varepsilon^{-1/2}\ge 1 $ we observe that $ u_\lambda $ satisfies 
  $ (K_{1}) $. By classical well-posedness result for $ (K_1) $ (see for instance \cite{Kato}), there exists a non increasing function 
   $ R \, :\, \R_+^*\to \R_+^* $ such that 
   $$
   \|u_\lambda\|_{L^\infty_{T'} H^s} \lesssim \| \varphi_\lambda \|_{H^s} \mbox{ with } T'=R(\|\varphi_\lambda\|_{H^1}) \; .
  $$
 Coming back to $ u $, noticing that $ \|\varphi_{\lambda}\|_{H^1} \lesssim \lambda^{-3/2} \|\varphi\|_{H^{1}} $
  and  that $ 1\le \lambda=\varepsilon^{-1/2} \lesssim  (\| \varphi\|_{H^1}+\| \varphi\|_{H^1}^2) $  we deduce   that 
  $$
  \|u\|_{L^\infty_T H^s} \lesssim \| \varphi\|_{H^s} \mbox{ with } T=T(\|\varphi\|_{H^1}) \; ,
  $$
  which completes the proof of \eqref{to3}.
  \subsection{Proof the equi-continuity result}
  Now to prove the equi-continuity result we will make use of  Bona-Smith argument \cite{BS}. To simplify the expository we will only consider 
   the most difficult case that is the case $ s=1 $. We thus want to prove that, be given a sequence $ \{\varphi_k\} \subset H^1(\R) $ converging 
   towards $ \varphi $ in $ H^1(\R) $, the emanating solutions $ u_{\varepsilon,k}:=S_{K_\varepsilon}(\varphi_k) $ satisfy 
   \begin{equation}
   \lim_{k\to \infty} \sup_{0<\varepsilon<1} \| u_{\varepsilon,k} -u_{\varepsilon} \|_{L^\infty_T H^1} =0 \;,
   \end{equation}
   where $ u_\varepsilon:=S_{K_\varepsilon}(\varphi) $ and $ T=T(\|\varphi\|_{H^1}) $. 
  We first notice that we can restrict ourself to consider  $ \varepsilon $ satisfying \eqref{eps} since the same dilation argument as above  yields directly the result  otherwise.
  
    The first step consists in repeating the arguments of Sections \ref{section2} $ \& $ \ref{section3} to get a $ L^2 $-Lipschitz bound, uniform in $ \varepsilon$,  for $ H^1 $-solution. 
    This is the aim of the following proposition which proof is postponed in the appendix. 
      \begin{proposition}\label{proplip}
    Let $0< \varepsilon<1 $, $T>0  $ and $ v \in Y_{\varepsilon,T}^1 $  satisfying 
    \begin{equation}\label{hypv1}
    (\sqrt{\varepsilon}+T^{1/4})\Bigl(\| v\|_{Y_{\varepsilon,T}^1}+\| v\|_{Y_{\varepsilon,T}^1}^4 \Bigr) <\!\!< 1 \; 
    \end{equation}
    and 
    \begin{equation}\label{hypv2}
     \|P_{B_\varepsilon} \partial_x v \|_{L^{1}_{T} L^\infty_{x}} \lesssim  (\sqrt{\varepsilon}+T^{1/4})\Bigl(\| v\|_{Y_{\varepsilon,T}^1}+\| v\|_{Y_{\varepsilon,T}^1}^2 \Bigr) \;  .
     \end{equation}
     Then any solution $ w \in C([0,T]; H^1(\R)) $ 
     to 
   \begin{equation}\label{ww}
     \partial_t w + \partial_x^3 w +\varepsilon \partial_x^5 w +  \frac{1}{2} \partial_x( w v ) =0 
    \end{equation}
  satisfies
      \begin{equation}
    \|w\|_{L^\infty_T L^2_x} \lesssim \|Êw(0)\|_{L^2_x}
    \end{equation}
    where the implicit constant is independent of $ \varepsilon $.
    \end{proposition}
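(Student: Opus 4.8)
The plan is to mimic, at the $L^2$-level, the energy/dispersive scheme that was used to prove the $H^s$-bound, splitting the frequency range into the region $\complement A_\varepsilon$ (where one combines an energy estimate with the uniform local smoothing/Strichartz inequalities of Lemma \ref{linear} and Lemma \ref{carlos}) and the region $A_\varepsilon$ (where one uses the Bourgain space $X^{0,1/2,1}_\varepsilon$ via the bilinear estimate \eqref{bilinear}). Concretely, write $w = P_{A_\varepsilon} w + P_{\complement A_\varepsilon} w$ and set $\mathcal{E}(w):= \|P_{A_\varepsilon} w\|_{X^{0,1/2,1}_{\varepsilon,T}} + \|w\|_{L^\infty_T L^2_x}$, the $L^2$-analogue of $Y^1_{\varepsilon,T}$. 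The goal is to close an inequality of the form
\begin{equation}\label{planclose}
\mathcal{E}(w)^2 \lesssim \|w(0)\|_{L^2_x}^2 + (\sqrt\varepsilon + T^{1/4})\Bigl(\|v\|_{Y^1_{\varepsilon,T}} + \|v\|_{Y^1_{\varepsilon,T}}^4\Bigr)\, \mathcal{E}(w)^2 ,
\end{equation}
whence the smallness hypothesis \eqref{hypv1} absorbs the last term and gives $\mathcal{E}(w)\lesssim \|w(0)\|_{L^2_x}$.

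First I would treat the low-frequency part $P_{\complement A_\varepsilon} w$ by an energy estimate analogous to Lemma \ref{lemma32}: apply $P_{\complement A_\varepsilon}$ to \eqref{ww}, pair with $P_{\complement A_\varepsilon} w$ in $L^2_x$, and note that the dispersive terms drop out. One is left with $\tfrac{d}{dt}\|P_{\complement A_\varepsilon} w\|_{L^2_x}^2 = -\int P_{\complement A_\varepsilon}\partial_x(vw)\, P_{\complement A_\varepsilon} w$. Decompose $v = P_{B_\varepsilon} v + P_{\complement B_\varepsilon} v$ and, exactly as in the proof of Lemma \ref{lemma32}, use the commutator estimate \eqref{com} (here at the level $s=1$, so in fact just $\|[\,P_{\complement A_\varepsilon},f]g\|_{L^2}\lesssim \|f_x\|_{L^\infty}\|g\|_{L^2}$) together with the support relation \eqref{supo} to bound the $P_{B_\varepsilon} v$ contribution by $\|P_{B_\varepsilon} v_x\|_{L^\infty_x}\|w\|_{L^2_x}^2$, and use Bernstein on the frequency-$\varepsilon^{-1/2}$ localized $P_{\complement B_\varepsilon}$ piece to bound its contribution by $\|P_{\complement A_\varepsilon} v_x\|_{L^\infty_x}\|w\|_{L^2_x}^2$. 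Integrating in time and invoking hypothesis \eqref{hypv2} on $\|P_{B_\varepsilon}\partial_x v\|_{L^1_T L^\infty_x}$, together with Lemma \ref{carlos} applied to $v_x$ (which, because $v$ solves $(K_\varepsilon)$ up to lower order, gives $\|P_{\complement A_\varepsilon} v_x\|_{L^1_T L^\infty_x}\lesssim (\sqrt\varepsilon + T)\|v\|_{L^\infty_T H^1} + T\|v\|_{L^\infty_T H^1}^2$), yields
\begin{equation}\label{planenergy}
\|P_{\complement A_\varepsilon} w\|_{L^\infty_T L^2_x}^2 \lesssim \|w(0)\|_{L^2_x}^2 + (\sqrt\varepsilon + T^{1/4})\Bigl(\|v\|_{Y^1_{\varepsilon,T}}+\|v\|_{Y^1_{\varepsilon,T}}^2\Bigr)\,\|w\|_{L^\infty_T L^2_x}^2 .
\end{equation}

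Next I would bound the high-frequency part $P_{A_\varepsilon} w$ in $X^{0,1/2,1}_{\varepsilon,T}$ via the Duhamel formula, the linear estimates in Bourgain spaces, and the bilinear estimate \eqref{bilinear} applied with $s=1$ to $\partial_x(vw)$ — note \eqref{bilinear} is not symmetric in its two arguments, so I would use it once in the form that puts $v$ in $Y^1_\varepsilon + X^{0,1}_\varepsilon$ (controlled by $\|v\|_{Y^1_{\varepsilon,T}}$, using Lemma \ref{oo} for $v$) and $w$ in $Y^0_\varepsilon + X^{-1,1}_\varepsilon$. The factor $X^{-1,1}_\varepsilon$ for $w$ is the place where one invokes, for $w$ instead of for a genuine $(K_\varepsilon)$-solution, the analogue of Lemma \ref{oo}: since $w$ solves \eqref{ww}, $U_\varepsilon(-t)w$ has time derivative $-\tfrac12 U_\varepsilon(-t)\partial_x(vw)$, and $\|vw\|_{L^2_x}\lesssim \|v\|_{H^1_x}\|w\|_{L^2_x}$, giving $\|w\|_{X^{-1,1}_{\varepsilon,T}}\lesssim \|w\|_{L^\infty_T L^2_x}(1+\|v\|_{L^\infty_T H^1})$. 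Collecting the time gain $T^{1/4}$ from \eqref{bilinear}, this produces
\begin{equation}\label{planbourgain}
\|P_{A_\varepsilon} w\|_{X^{0,1/2,1}_{\varepsilon,T}} \lesssim \|w(0)\|_{L^2_x} + T^{1/4}\Bigl(\|v\|_{Y^1_{\varepsilon,T}}+\|v\|_{Y^1_{\varepsilon,T}}^2\Bigr)\,\mathcal{E}(w) .
\end{equation}
Adding \eqref{planenergy} and the square of \eqref{planbourgain} gives \eqref{planclose}, and hypothesis \eqref{hypv1} closes the argument.

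The main obstacle I anticipate is the book-keeping needed to run the bilinear estimate \eqref{bilinear} on the \emph{difference} equation \eqref{ww} rather than on $(K_\varepsilon)$ itself: one must verify that every place where the proof of \eqref{bilinear} in Section \ref{section2} used that the inputs were $(K_\varepsilon)$-solutions (specifically the appeal to Lemma \ref{oo} to gain the $X^{\cdot,1}_\varepsilon$ time-regularity) goes through with $w$ replaced for one input and $v$ a bona fide solution for the other — which is exactly why the hypotheses \eqref{hypv1}--\eqref{hypv2} on $v$ are imposed rather than assuming $v$ solves $(K_\varepsilon)$. A secondary technical point is checking that Lemma \ref{carlos} and the energy estimate are genuinely uniform in $\varepsilon$ at the $L^2$ level; but since neither uses more than one derivative on $v$ and the nonlinearity $\tfrac12\partial_x(vw)$ is bilinear with one factor in $H^1$, the same splitting into $O(\varepsilon^{-1/2})$-many subintervals of length $\varepsilon^{1/2}$ that was used for Lemma \ref{carlos} carries over verbatim. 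The remaining computations are routine and, as the statement indicates, are deferred to the appendix.
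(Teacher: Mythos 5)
Your overall architecture --- an energy estimate on $P_{\complement A_\varepsilon} w$ combined with a Bourgain-space estimate on $P_{A_\varepsilon} w$, closed by the smallness hypothesis \eqref{hypv1} --- is exactly the paper's, but two of your key steps do not go through as stated. First, you propose to control $\|P_{A_\varepsilon}\partial_x(vw)\|_{X^{0,-1/2,1}_\varepsilon}$ by ``applying \eqref{bilinear} with $s=1$'' in an asymmetric form that measures $w$ only in $Y^0_\varepsilon+X^{-1,1}_\varepsilon$. Estimate \eqref{bilinear} does not provide this: it is established only for $s\ge 1$ with \emph{both} factors carrying at least $H^1$-type regularity (the reduction to $s=1$ uses $\langle\xi\rangle^s\lesssim\langle\xi\rangle(\langle\xi_1\rangle^{s-1}+\langle\xi_2\rangle^{s-1})$, which is useless for the regularity pair $(1,0)$ you need). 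What is required is the genuinely new asymmetric bilinear estimate \eqref{bilinearA} of Lemma \ref{lemA}, whose proof forces one to redo the case analysis of Lemma \ref{lemme2} (this is Lemma \ref{lemmeA2}): since $v$ and $w$ no longer play symmetric roles, the subregion where the smallest frequency is carried by $w$ must be treated separately from the one where it is carried by $v$, with the single available derivative distributed differently in each case. This is the main new content of the appendix, not book-keeping; and note that the proof of \eqref{bilinear} never uses that its inputs solve $(K_\varepsilon)$, so your diagnosis of where the difficulty lies is off target.

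Second, in the energy step your treatment of the high-frequency piece $P_{\complement B_\varepsilon}v$ relies on bounding $\|P_{\complement A_\varepsilon}\partial_x v\|_{L^1_TL^\infty_x}$ via Lemma \ref{carlos} ``because $v$ solves $(K_\varepsilon)$ up to lower order''. But in Proposition \ref{proplip} the function $v$ is \emph{not} assumed to solve any equation --- only \eqref{hypv1} and \eqref{hypv2} are available, and \eqref{hypv2} controls $P_{B_\varepsilon}\partial_x v$, i.e.\ the part of $\partial_x v$ \emph{away} from $\sqrt{3/(10\varepsilon)}$, which says nothing about $P_{\complement B_\varepsilon}\partial_x v$. (Moreover the factor appearing in this term is $P_{\complement B_\varepsilon}v$, whose Fourier support is disjoint from that of $P_{\complement A_\varepsilon}$, so the norm $\|P_{\complement A_\varepsilon}v_x\|_{L^\infty_x}$ is not the relevant one.) The paper's Proposition \ref{propo2A} instead places the $L^\infty_x$ norm on $P_{\complement A_\varepsilon}w$ --- legitimate because the whole term is localized by the output projector $P_{\complement A_\varepsilon}$ --- and then applies Lemma \ref{carlos} to the equation \eqref{ww} that $w$ \emph{does} solve, which closes quadratically in $w$. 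With these two repairs your scheme coincides with the paper's proof.
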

    
   Now, for any $ \varphi\in H^1(\R) $ and any dyadic integer $ N $ we set $ \varphi^N:=P_{\le N} \varphi $.
   By straightforward calculations in Fourier space, for any $\varphi\in H^1(\R) $, any $ N\ge 1 $ and any $ r\ge 0 $, 
   \begin{equation}\label{init}
   \|\varphi^N\|_{H^{1+r}_x}\lesssim N^r \|Ê\varphi\|_{H^1_x}\quad\mbox{Êand } \quad
   \| \varphi^N-\varphi\|_{H_x^{1-r}} \lesssim o(N^{-r}) \| \varphi\|_{H^1_x}\; .
   \end{equation}
    Setting $ u_{\varepsilon}^N:= S_{K_\varepsilon}(\varphi^N ) $ and $ u_{\varepsilon,k}^N:= S_{K_\varepsilon}(\varphi_k^N ) $,   \eqref{ji} ensures that there exists $ T_0=T_0(\|\varphi\|_{H^1}) \in]0,1[ $ such that
     for $ k  $ large enough and $ z:= u_\varepsilon$,  $ u_\varepsilon^N$, $  u_{\varepsilon,k}$ or $  u_{\varepsilon,k}^N$, 
       \begin{equation}\label{tktk}
      (\sqrt{\varepsilon}+T_0^{1/4})\Bigl(\| z\|_{Y_{\varepsilon,T_0}^1}+\| z\|_{Y_{\varepsilon,T_0}^1}^4 \Bigr) <\!\! <1  \;  
     \end{equation}
    and, according to  \eqref{gf}, \eqref{linear2}  and \eqref{carlos2},
    \begin{equation}\label{qa1}
       \|z \|_{Y^1_{\varepsilon,T_0}} \le 2  \| \varphi\|_{H^1} \; \mbox{ and } \quad \|P_{B_\varepsilon}  \partial_x  z \|_{L^{1}_{T_0} L^\infty_{x}} 
    \lesssim  (\sqrt{\varepsilon}+T_0^{1/4})\Bigl(\| z\|_{Y_{\varepsilon,T_0}^1}+\| z\|_{Y_{\varepsilon,T_0}^1}^2 \Bigr)  \; .
    \end{equation}
    Moreover,
   \begin{equation}\label{qas}
    \|u_{\varepsilon}^N \|_{Y^s_{\varepsilon,T_0}} + \| u_{\varepsilon,k}^N \| _{ÊY^s_{\varepsilon,T_0}} \lesssim \| \varphi^N\|_{H^s} \lesssim N^{s-1}  \| \varphi\|_{H^1}
    \end{equation}
    provided $ s\ge 1 $.
    By the triangle inequality, it holds 
    \begin{equation}\label{trtr}
    \|u_\varepsilon-u_{\varepsilon,k}\|_{L^\infty_T H^1_x}Ê\le 
     \|u_\varepsilon-u_{\varepsilon}^N\|_{L^\infty_T H^1_x}+ \|u_{\varepsilon}^N-u_{\varepsilon,k}^N\|_{L^\infty_T H^1_x}+ 
     \|u_{\varepsilon,k}^N-u_{\varepsilon,k}\|_{L^\infty_T H^1_x}\; .
     \end{equation}
    We start by  estimating the first term of the right-hand side fo \eqref{trtr}. Setting $ w_\varepsilon:=u_\varepsilon-u_{\varepsilon}^N$, we observe that $ w_\varepsilon$ satisfies
     \begin{equation}\label{eq1w}
    \partial_t w_\varepsilon + \partial_x^3 w_\varepsilon +\varepsilon \partial_x^5 w_\varepsilon +  \frac{1}{2} \partial_x( w_\varepsilon (u_\varepsilon^N+u_\varepsilon)) =0 \; .
    \end{equation}
    Therefore, combining Proposition \ref{proplip}, \eqref{tktk}-\eqref{qa1} and \eqref{init} we get that 
    \begin{equation}\label{ouaoua}
    \|w_\varepsilon\|_{L^\infty_{T_0} L^2_x} \lesssim o(N^{-1})  \; .
    \end{equation}    
    According to \eqref{est2pro1} we also have 
      \begin{equation}\label{zw2}
    \|P_{A_\varepsilon}w_\varepsilon\|_{X^{1,1/2,1}_{\varepsilon,T_0}} \le C\,   \|\varphi-\varphi^N\|_{H^1_x} + \frac{1}{2}\|w_\varepsilon\|_{Y^{1}_{\varepsilon,T_0}} \; .
    \end{equation}   
    Now to estimate $ P_{\complement A_\varepsilon}  $ we rewrite the equation satisfying by $ w_\varepsilon$ in the  following less symmetric  way :
    $$
      \partial_t w_\varepsilon + \partial_x^3 w_\varepsilon +\varepsilon \partial_x^5 w_\varepsilon =-  \frac{1}{2} \partial_x( w_\varepsilon^2) - \partial_x( u_\varepsilon^N w_\varepsilon ) \; .
    $$
    Applying the operator $ P_{\complement A_\varepsilon}  $ on the above equation 
 and taking the $H^1 $-scalar product  with  $  P_{\complement A_\varepsilon}  w_\varepsilon $ we get
 \begin{eqnarray}
\frac{d}{dt} \|P_{\complement A_\varepsilon} w_\varepsilon(t)\|_{H^1_x}^2  & = &   \int_{\R}  J_x^1P_{\complement A_\varepsilon} \partial_x (w_\varepsilon^2)  J_x^1 P_{\complement A_\varepsilon} w_\varepsilon+   2 \int_{\R}  J_x^1 P_{\complement A_\varepsilon} (u_\varepsilon^N \partial_x w_\varepsilon) J_x^1 P_{\complement A_\varepsilon} w_\varepsilon \nonumber \\
 & & 
+ 2\int_{\R} J_x^1  P_{\complement A_\varepsilon} (w_\varepsilon \partial_x u_\varepsilon^N ) J_x^1  P_{\complement A_\varepsilon} w_\varepsilon\;  .
\label{cc}
\end{eqnarray}
    The contribution of the first term of the above right-hand side can be estimated in exactly the same way as in the proof of Lemma \ref{lemma32} by 
    $
    \| P_{B_\varepsilon} \partial_x w_\varepsilon \|_{L^\infty_x} \|Êw_\varepsilon\|_{H^1_x}^2  $.
    The second term can be estimated also in the same way by 
    $$
   ( \| P_{B_\varepsilon} \partial_x u_\varepsilon^N \|_{L^\infty_x} \|Êw_\varepsilon\|_{H^1_x} + \|u_\varepsilon^N\|_{H^1_x} \|P_{B_\varepsilon} \partial_x w_\varepsilon \|_{L^\infty_x})
    \|Êw_\varepsilon\|_{H^1_x} \; .
    $$
     The difficulty comes from the third term. To estimate its contribution we first decompose $ w_\varepsilon$ and $ u_\varepsilon^N $ to rewrite it as 
    $$
2 \int_{\R} J^1_x P_{\complement A_\varepsilon} \Bigl(  P_{\complement B_\varepsilon} w_\varepsilon  P_{\complement B_\varepsilon} \partial_x u_\varepsilon^N 
+P_{\complement B_\varepsilon} w_\varepsilon   P_{B_\varepsilon}\partial_x  u_\varepsilon^N  
+  P_{B_\varepsilon} w_\varepsilon  P_{\complement B_\varepsilon} \partial_x u_\varepsilon^N \Bigr) J^1_x P_{\complement A_\varepsilon} w_\varepsilon
$$
  $$
+2\int_{\R} J^1_x P_{\complement A_\varepsilon}  (P_{B_\varepsilon} w_\varepsilon P_{B_\varepsilon} \partial_x u_\varepsilon^N ) J^1_x P_{\complement A_\varepsilon} w_\varepsilon
=I_1+I_2
$$
According to the frequency projections, in the same way as proof of Lemma \ref{lemma32}, all the functions in $ I_1 $ are supported in frequencies of order 
 $ \varepsilon^{-1/2} $, which leads to 
 $$
 I_1 \lesssim \|P_{\complement A_\varepsilon}Ê\partial_x w_\varepsilon \|_{L^\infty_x} \|w_\varepsilon\|_{H^1_x} \|u_\varepsilon^N\|_{H^1_x} \; .
 $$
 Finally we control the contribution of $ I_2 $ by 
 $$
 I_2\lesssim \|P_{\complement A_\varepsilon}\partial_x Êw_\varepsilon \|_{L^\infty_x}\Bigl( \|Êw_\varepsilon\|_{H^1} \|u_\varepsilon^N\|_{H^1_x}+
 \| w_\varepsilon\|_{L^2} \| u_\varepsilon^N\|_{H^2} \Bigr)
 $$
 Note that the difficulty  to control $ I_2$ comes from the fact  that we can not avoid to put a $ H^2$-norm on $u_\varepsilon^N$. But the idea of Bona-Smith is to compensate the growth with $ N $ of this $ H^2 $-norm by the decay with $ N $ of the $ L^2 $-norm of $w_\varepsilon$.
 Actually, integrating \eqref{cc} Êin time, with the above estimates together with \eqref{ouaoua} and \eqref{qa1}-\eqref{qas} in hand, we  get
 \begin{eqnarray}
 \|P_{\complement A_\varepsilon} w_\varepsilon \|_{L^\infty_{T_0} H^1_x}^2 & \lesssim & \|P_{\complement A_\varepsilon} (\varphi-\varphi^N)\|_{H^1_x}^2 
 +(\|P_{B_\varepsilon} \partial_x w_\varepsilon \|_{L^1_{T_0} L^\infty_x}+\|P_{B_\varepsilon} \partial_x  u_\varepsilon^N\|_{L^1_{T_0} L^\infty_x})
\|w_\varepsilon\|_{L^\infty_{T_0} H^1_x}^2 \nonumber \\
 & & +\|P_{B_\varepsilon}  \partial_x  w_\varepsilon \|_{L^1_{T_0} L^\infty_x} (\|w_\varepsilon\|_{L^\infty_{T_0} H^1_x}\|u_\varepsilon^N\|_{L^\infty_{T_0} H^1_x}+
 \|w_\varepsilon\|_{L^\infty_{T_0} L^2_x}\|u_\varepsilon^N\|_{L^\infty_{T_0} H^2_x})\nonumber \\
 & \le& \gamma_1(N)+ \frac{1}{8} \|w_\varepsilon\|_{L^\infty_{T_0} H^1_x}^2+ 2\|\varphi\|_{H^1_x} \|P_{B_\varepsilon}
  \partial_x  w_\varepsilon\|_{L^1_{T_0} L^\infty_x}   \|w_\varepsilon\|_{L^\infty_{T_0} H^1_x} \nonumber \\
   & & +o(N^{-1} )  N 
 \label{zw}
\end{eqnarray}
where $ \gamma_1(N)\to 0 $ as $ N\to \infty $.
On the other hand, applying Lemma \ref{carlos} on the  $x$-derivative of  \eqref{eq1w}  we get 
\begin{equation} \label{carlos3} 
\| P_{\complement A_\varepsilon}  \partial_x  w_\varepsilon\|_{L^{1}_{T_0} L^\infty_{x}} 
   \lesssim   (\varepsilon^{1/2}+T_0)
\|w_\varepsilon\|_{L^\infty_{T_0}\, H^1_{x}} + T_0 \| w_\varepsilon\|_{L^\infty_{T_0} H^1_x}(\|u_\varepsilon\|_{L^\infty_{T_0} H^1}+ \|u_\varepsilon^N\|_{L^\infty_{T_0} H^1})
\end{equation}
 Therefore, gathering \eqref{zw}, \eqref{carlos3} and \eqref{zw2} with \eqref{tktk} in hand, we obtain
$$
\| w_\varepsilon \|_{Y^1_{T_0}}^2  \le   \gamma_2 (N)+\frac{1}{2} \| w_\varepsilon \|_{Y^1_{T_0}}^2
$$
with $ \gamma_2 (N)\to 0 $ as $ N\to \infty $. This ensures that  
$$
 \|u_\varepsilon-u_{\varepsilon}^N\|_{L^\infty_{T_0}\, H^1_{x}}  \le  2   \gamma_2 (N) \; .
 $$
To estimate the contribution of the third term of the right-hand side of \eqref{trtr} we proceed exactly in the same way as for the first one, by replacing 
 $ u_\varepsilon$ by $u_{\varepsilon,k} $ and $u_{\varepsilon}^N$ by $ u_{\varepsilon,k}^N $.  We then obtain 
  $$
 \|u_{\varepsilon,k}-u_{\varepsilon,k}^N\|_{L^\infty_{T_0}\, H^1_{x}}  \le \gamma_3 (N) \; .
 $$
 with  $ \gamma_3 (N)\to 0 $ as $ N\to \infty $. 
 Finally, the contribution of the second term of the right-hand side of \eqref{trtr} is also obtain  in the same way by replacing 
 $ u_\varepsilon$ by $u_{\varepsilon,k}^N $  (actually, contrary to the preceding contributions,  here both terms $u_{\varepsilon,k}^N$ and $ u_{\varepsilon}^N $   can play a symmetric role ). However,  for this term,   Proposition \ref{proplip} only ensures that 
 $$
 \| u_{\varepsilon}^N-u_{\varepsilon,k}^N\|_{L^\infty_{T_0} L^2_x}\lesssim  \|Ê\varphi-\varphi_k\|_{L^2_x} \; .
 $$
 Therefore, setting $ w_\varepsilon= u_{\varepsilon}^N-u_{\varepsilon,k}^N$,  one has to replace $  o(N^{-1} )  N $ by $  \|Ê\varphi-\varphi_k\|_{L^2_x}  N $ in the right-hand side member of \eqref{zw} when estimating 
  $  \| P_{\complement A_\varepsilon}  (u_{\varepsilon}^N-u_{\varepsilon,k}^N)\|_{L^\infty_{T_0} H^1_x} $.
   We thus obtain
 $$
 \|u_{\varepsilon}^N-u_{\varepsilon,k}^N\|_{L^\infty_{T_0}\, H^1_{x}}  \lesssim \|Ê\varphi-\varphi_k\|_{H^1_x}
 +   N  \|Ê\varphi-\varphi_k\|_{L^2_x}  \; .
 $$
 Gathering the above estimates, \eqref{trtr} leads to  
$$
\lim_{k\to +\infty} \sup_{0<\varepsilon<\varepsilon_0(\|\varphi\|_{H^1})} \|u_{\varepsilon}-u_{\varepsilon,k}\|_{L^\infty_{T_0}\, H^1_{x}}  =0
$$
which completes the proof of Theorem \ref{theo2}.
  \subsection{Proof of Theorem \ref{theo1}}
  We follow general arguments (see for instance  \cite{GW}).   Let us denote by $ S_{K_\varepsilon} $ and $ S_{KdV}Ê$ the nonlinear
  group associated with respectively $(K_\varepsilon) $ and KdV. Let  $ \varphi\in H^s_x(\R) $, $s\ge 1$ and let $ T=T(\|\varphi\|_{H^1_x} ) >0 $ be given by Theorem \ref{theo1}. For any $ N>0 $  we can rewrite $ S_{K_\varepsilon}(\varphi)-
  S_{KdV}(\varphi) $ as
  \begin{eqnarray*}
  S_{K_\varepsilon}(\varphi)-S_{KdV}(\varphi)&=& \Bigl(S_{K_\varepsilon}(\varphi)-S_{K_\varepsilon}(P_{\le N} \varphi)\Bigr)+
  \Bigl(S_{K_\varepsilon}(P_{\le N} \varphi)- S_{KdV}(P_{\le N}\varphi)\Bigr)\\
  & & + \Bigl(S_{KdV}(P_{\le N}\varphi)-S_{KdV}(\varphi)\Bigr)=I_{\varepsilon,N}+J_{\varepsilon,N}+K_{N}\; .
  \end{eqnarray*}
  By continuity with respect to initial data in $ H^s(\R) $ of the solution map associated with the KdV equation, we have  $\displaystyle  \lim_{N\to \infty}Ê
  \|K_N\|_{L^\infty(0,T;H^s_x) }=0 $. On the other hand, \eqref{to4} ensures that
  $$
  \lim_{N\to \infty} \sup_{\varepsilon\in]0,1[} \|I_{\varepsilon,N}\|_{L^\infty(0,T;H^s_x)}Ê=0 \; .
  $$
  It thus remains to check that for any fixed $ N>0 $,  $ \displaystyle \lim_{\varepsilon\to 0} \|J_{\varepsilon,N}\|_{L^\infty(0,T;H^s_x)}Ê=0 $.
  Since $ P_{\le N} \varphi\in H^\infty(\R) $, it is worth noticing that
   $ S_{K_\varepsilon}(P_{\le N} \varphi) $ and $ S_{KdV}(P_{\le N}\varphi) $ belong to $C^\infty (\R;H^\infty(\R)) $. Moreover, according to Theorem \ref{theo2} and the well-posedness theory of the KdV equation (see for instance \cite{Bourgain1993}),  for all $ \theta\in \R$ and $ \varepsilon\in ]0,1[ $,
   $$
   \|S_{K_\varepsilon}(P_{\le N} \varphi) \|_{L^\infty_T H^{\theta}_x}+ \|S_{KdV}(P_{\le N} \varphi)\|_{L^\infty_T H^{\theta}_x} \le C(N,\theta,  \|\varphi\|_{L^2_x})  \; .
   $$

 Now, setting   $ v_\varepsilon := S_{K_\varepsilon}(P_{\le N} \varphi)  $ and  $v:=S_{KdV}(P_{\le N}\varphi) $, we  observe that   $ w_\varepsilon:= v_\varepsilon-v $ satisfies
   $$
    \partial_t w_\varepsilon+ \partial_x^3 w_\varepsilon + \varepsilon \partial_x^5   w_\varepsilon = -\frac{1}{2}\partial_x \Bigl( w_\varepsilon
    (v +v_\varepsilon)\Bigr) - \varepsilon v_{5x}
   $$
   with initial data $ w_\varepsilon(0)=0 $. Taking the  $ H^s $-scalar product of this last equation with $w_\varepsilon $ and  integrating by parts 
   we get 
   $$
   \frac{d}{dt} \|w_\varepsilon\|_{H^s_x} \lesssim \Bigl( 1+\|\partial_x(v +v_\varepsilon)\|_{L^\infty_x}) \| w_\varepsilon\|_{H^s_x}^2
   + \|[J^s, (v +v_\varepsilon)]\partial_x  w_\varepsilon\|_{L^2} \| w_\varepsilon\|_{H^s_x} + \varepsilon^2 \|v_{5x}\|_{H^s_x}^2 \; .
   $$
 Making use of   the following  commutator estimate (see for instance \cite{Tom}), that  holds for $ s>1/2$, 
   \begin{equation}\label{commute}
   \| [J^s_x, f] g \|_{L^2_x}Ê\lesssim \|Êf_x\|_{H^{s}_x} \| g\|_{H^{s-1}_x}\; ,
   \end{equation}
   we easily get 
   $$
   \frac{d}{dt} \| w_\varepsilon(t)\|_{H^s_x}^2\lesssim  C(N,s+1,  \|\varphi\|_{L^2_x}) 
   \| w_\varepsilon(t)\|_{H^s_x}^2+ \varepsilon^2  C(N,5+s,  \|\varphi\|_{L^2_x})^2\; .
   $$
   Integrating this differential inequality on $ [0,T]$, this ensures  that $  \displaystyle \lim_{\varepsilon\to 0} \| w_\varepsilon\|_{L^\infty(0,T;H^s) }=0 $ and completes the proof of Theorem  \ref{theo1} with $T=T(\|\varphi\|_{H^1})$. Finally, recalling that the energy conservation of the KdV equation ensures that for any $ \varphi\in H^1(\R) $ it holds,
  $$
\sup_{t\in\R}  \|S_{KdV} (\varphi)(t)\|_{H^1_x}\lesssim \|\varphi\|_{H^1_x} + \|\varphi\|_{L^2_x}^5 \; ,
$$
  we obtain the same convergence result on any time interval $ [0,T_0] $ with $ T_0>T(\|\varphi\|_{H^1_x})$ by reiterating the convergence result   about  $ T_0¡/T(\|\varphi\|_{H^1_x}+ \|\varphi\|_{L^2_x}^5) $ times.
  \section{appendix: Proof of Proposition \ref{proplip}}
   We follow very closely Sections \ref{section2} and \ref{section3}. The first step consists in  establishing the following  estimate on $ P_{A_\varepsilon} w $.
    \begin{proposition}\label{proA1}
   Let $ 0<T<1 $ and $ w \in C([0,T]; H^1(\R)) $ be a solution to \eqref{ww} with $ 0<\varepsilon<\! <1 $ and initial data $ \varphi $. Then it holds
  \begin{equation}\label{est1pro1A}
  \| P_{A_\varepsilon} w \|_{X^{0,1/2,1}_{\varepsilon,T} }\lesssim  \|\varphi\|_{L^2}  +T^{\frac{1}{4}-}\| v\|_{Y_{\varepsilon,T}^1} \| w\|_{Y_{\varepsilon,T}^0}
  (1+ \| v\|_{Y_{\varepsilon,T}^1})
  \end{equation}
\end{proposition}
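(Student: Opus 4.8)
The plan is to follow Sections~\ref{section2} and~\ref{section3} almost verbatim, the only structural change being that the nonlinearity $\frac12\partial_x(wv)$ is now linear in the unknown $w$, with the factor $v$ carrying $H^1$-regularity and the factor $w$ only $L^2$-regularity. First I would write the Duhamel formula for \eqref{ww},
$$
w(t)=U_\varepsilon(t)\varphi-\frac12\int_0^t U_\varepsilon(t-t')\,\partial_x(wv)(t')\,dt',
$$
and apply the standard linear estimates in the spaces $X^{s,b,q}_\varepsilon$ (as used for Proposition~\ref{pro1}, cf.~\cite{Bourgain1993,Gi}). This reduces \eqref{est1pro1A} to the bilinear estimate
$$
\|P_{A_\varepsilon}\partial_x(w v)\|_{X^{0,-1/2,1}_\varepsilon}\lesssim T^{\frac14-}\bigl(\|w\|_{Y^0_\varepsilon}+\|w\|_{X^{-1,1}_\varepsilon}\bigr)\bigl(\|v\|_{Y^1_\varepsilon}+\|v\|_{X^{0,1}_\varepsilon}\bigr),
$$
valid for $w,v$ supported in time in $]-T,T[$ with $0<T\le1$, together with the fact — established exactly as in Lemma~\ref{oo} — that $v$ and $w$ lie in Bourgain spaces with one unit of time regularity. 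Indeed, since $v$ solves $(K_\varepsilon)$, Lemma~\ref{oo} with $s=1$ gives $\|v\|_{X^{0,1}_{\varepsilon,T}}\lesssim\|v\|_{Y^1_{\varepsilon,T}}(1+\|v\|_{Y^1_{\varepsilon,T}})$; and since $w$ solves \eqref{ww} with $w\in C([0,T];H^1)$, the same computation — writing $\partial_t(U_\varepsilon(-t)w)=-\frac12U_\varepsilon(-t)\partial_x(wv)$ and estimating $\|\partial_x(wv)\|_{H^{-1}_x}\le\|wv\|_{L^2_x}\lesssim\|w\|_{L^2_x}\|v\|_{H^1_x}$ — gives $\|w\|_{X^{-1,1}_{\varepsilon,T}}\lesssim\|w\|_{Y^0_{\varepsilon,T}}(1+\|v\|_{Y^1_{\varepsilon,T}})$. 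Feeding these two bounds into the bilinear estimate yields \eqref{est1pro1A}.

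For the bilinear estimate itself I would repeat the case analysis of the proof of \eqref{bilinear}. The one genuine difference is the loss of symmetry: since $w$ and $v$ play distinct roles, both orderings of the input frequencies must be considered. This is harmless, because the $L^2$–$H^1$ structure lets us always afford to place the derivative, and the lossy Strichartz estimate, on whichever factor is convenient. Concretely: output frequencies $\lesssim1$ are dealt with directly as in \eqref{eqeq1}; the interaction of a high frequency with a frequency $\le8$ is handled by the argument of Lemma~\ref{lemme1}, using the Kato smoothing \eqref{linear1}, the estimate \eqref{linear3} and the resonance identity \eqref{resonance}; and for both inputs $\gtrsim1$ one splits, exactly as in Lemma~\ref{lemme2} and in the cases $1$, $2.1$, $2.2.1$, $2.2.2$ of the proof of \eqref{bilinear}, according to whether the output frequency lies outside or inside $[\sqrt{17/(80\varepsilon)},\sqrt{2/(5\varepsilon)}]$ and, inside, whether the smaller input frequency exceeds $\sqrt{17/(80\varepsilon)}$. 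Away from the stationary region one has $\Gamma(\xi,\xi_1)\gtrsim1$, hence the strong resonance relation \eqref{strong}, and one gains modulation as in Lemma~\ref{lemme2}; inside it one uses the $\varepsilon$-uniform Strichartz estimate \eqref{linear2} on the support of $P_{B_\varepsilon}$, following the Koch-Tzvetkov approach.

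The main obstacle is the same as in Section~\ref{section2}: the neighbourhood of the stationary point of $\phi_\varepsilon$ in the subcase where the high-frequency factor is $w$, which here carries only $L^2$-regularity. There the resonance function $\Theta$ degenerates, so the needed gain comes either from the uniform-in-$\varepsilon$ Strichartz estimate \eqref{linear2}, or — for the worst high–low interactions — from the fact that $w$ already belongs to $X^{-1,1}_{\varepsilon,T}$, i.e.\ has time regularity one. Propagating the factor $T^{\frac14-}$ through every subcase (by H\"older in time together with the time-support restriction to $]-T,T[$, which is also the source of the ``$-$'' compared with the $T^{1/4}$ of Proposition~\ref{pro1}) is routine bookkeeping that I will not detail here.
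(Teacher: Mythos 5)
Your proposal follows the paper's own route essentially verbatim: reduce via Duhamel and the linear $X^{s,b,q}_\varepsilon$ estimates to the bilinear bound $\|P_{A_\varepsilon}\partial_x(wv)\|_{X^{0,-1/2,1}_\varepsilon}\lesssim T^{\frac14-}(\|w\|_{Y^0_\varepsilon}+\|w\|_{X^{-1,1}_\varepsilon})(\|v\|_{Y^1_\varepsilon}+\|v\|_{X^{0,1}_\varepsilon})$ (the paper's Lemma \ref{lemA}), control $\|w\|_{X^{-1,1}_{\varepsilon,T}}$ by the equation exactly as in Lemma \ref{oo} (the paper's \eqref{zeA}), and prove the bilinear estimate by rerunning the decomposition of Sections \ref{section2}--\ref{section3} with the asymmetric $L^2$--$H^1$ distribution of regularity and both frequency orderings (the paper's Lemma \ref{lemmeA2} and the cases 1, 2.1, 2.2.1, 2.2.2). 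This matches the paper's proof in structure and in every key ingredient, so no further comparison is needed.
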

 \begin{proof}  We proceed as in Section \ref{section2}. First we observe that we have trivially 
   \begin{equation}\label{jj}
  \|P_{\le 8 } \partial_x (vw) \|_{X^{0,-1/2,1}_{\varepsilon}} \lesssim
   \| v w \|_{L^2_{tx}} \lesssim T^{1/2} \| v\|_{L^\infty_t H^1_x} \| w\|_{L^\infty_t L^2_x}  \; .
  \end{equation}
  and 
  \begin{eqnarray}
  \|P_{A_\varepsilon } \partial_x (v P_{\le 8} w) \|_{X^{0,-1/2,1}_{\varepsilon}}   & \lesssim  &
  \|P_{A_\varepsilon } \partial_x (v P_{\le 8} w) \|_{L^2_{tx}} \nonumber \\
  & \lesssim & T^{1/2} \Bigl(  \|v_x\|_{L^\infty_t L^2_x} \|P_{\le 8} w \|_{L^\infty_{tx}}+
   \|v\|_{L^\infty_{tx}} \|w\|_{L^\infty_t L^2_x} \Bigr)  \nonumber \\
   & \lesssim & T^{1/2} \|v\|_{L^\infty_t H^1_x} \|w\|_{L^\infty_t L^2_x}\label{jjA}\; .
  \end{eqnarray}
  Now to control $ \|P_{A_\varepsilon } \partial_x (w P_{\le 8} v) \|_{X^{0,-1/2,1}_{\varepsilon}}  $ we notice that in the same way as in 
   \eqref{jjA} we have 
   $$
 \|P_{A_\varepsilon } \partial_x (P_{\le 16} w P_{\le 8} v) \|_{X^{0,-1/2,1}_{\varepsilon}}\lesssim   T^{1/2} \| v\|_{L^\infty_t H^1_x} \| w\|_{L^\infty_t L^2_x}  \; .
   $$
 On the other hand, according to the frequency projections and Lemma \ref{lemme1}, the contribution of $P_{\ge 16} w $ can be estimated by 
 \begin{eqnarray}\label{eqeq2}
\|\partial_x P_{A_\varepsilon}(P_{\le 8} v  P_{\ge 16} w) \|_{X^{0,-1/2,1}_\varepsilon } & =& 
\|\partial_x P_{A_\varepsilon}\Lambda(P_{\le 8}v ,  P_{\ge 16} w) \|_{X^{0,-1/2,1}_\varepsilon }\nonumber \\
 & \lesssim & 
\Bigl\|\partial_x P_{A_\varepsilon}\Lambda\Bigl(P_{\le 8}{\mathcal F}^{-1}_{xt}(|\widehat{v}|) ,  \partial_x^{-1} P_{\ge 16} {\mathcal F}^{-1}_{xt}(|\widehat{w}|)\Bigr) \Bigr\|_{X^{1,-1/2,1}_\varepsilon }\nonumber \\
& \lesssim  &  \|v\|_{X^{0,1}_\varepsilon}
 \Bigl( T^{1/4}(\|P_{A_\varepsilon} w\|_{X^{0,1/2,1}_\varepsilon}+ \| w\|_{X^{-1,1}_\varepsilon})
 +\| w\|_{L^2_{tx}} \Bigr)
  \; .
 \end{eqnarray}
 To continue we need the following variant of Lemma \ref{lemme2}.
 \begin{lemma}\label{lemmeA2}
 Let $ v$ and $ w $ be two smooth functions supported in time in $ ]-T,T[ $ with $ 0<T\le 1 $.  Then, in the region  where the  strong resonance relation  \eqref{strong} holds, we have
 \begin{eqnarray}\label{eqlemmeA2}
 \|\partial_x ÊP_{A_\varepsilon} P_{\ge 8}(P_{\ge 8} v P_{\ge 8} w ) \|_{X^{0,-1/2,1}_\varepsilon } & \lesssim  & 
 T^{\frac{1}{4}-}  \|v\|_{X^{0,1}_\varepsilon}   \|w\|_{X^{-1,1}_\varepsilon}+\|v_x\|_{L^2_{tx}} ( \|w\|_{X^{-1,1}_\varepsilon}+ \|w\|_{L^2_{tx}}) \nonumber \\
  & &  +\|w\|_{L^2_{tx}}
  ( \|v\|_{X^{0,1}_\varepsilon} + \|v_x\|_{L^2_{tx}})  \; .
 \end{eqnarray}
 \end{lemma}
\begin{proof} We notice that the norms in the right-hand side of \eqref{lemme2}   only see the size of the modulus of the Fourier transforms. We can thus assume that all our functions have non-negative Fourier transforms.
 We set $ I:=  \|\partial_x P_{A_\varepsilon}P_{\ge 8}(P_{\ge 8} v P_{\ge 8} w) \|_{X^{0,-1/2,1}_\varepsilon }  $  and separate different subregions .\\
  $\bullet $ $|\sigma_2|\ge 2^{-5}  |\xi \xi_1 (\xi-\xi_1) |$. Then direct calculations  give
  \begin{eqnarray*}
  I & \lesssim  & 
  T^{\frac{1}{2}-}  \| w \|_{X^{-1,1}_\varepsilon} \| D_x^{-1} P_{\ge 8} v \|_{L^\infty_{tx}}\\
  & \lesssim &  T^{\frac{1}{2}-}  \| v \|_{X^{0,1}_\varepsilon} \| w \|_{X^{-1,1}_\varepsilon} \; .
  \end{eqnarray*}
   $\bullet $ $|\sigma_1|\ge 2^{-5}  |\xi \xi_1 (\xi-\xi_1) |$. 
     Then , by \eqref{linear2}Ê of Lemma \ref{linear} and duality, we get 
  \begin{eqnarray*}
  I & \lesssim  &  \Bigl\| ÊP_{A_\varepsilon} P_{\ge 8}\Bigl(P_{\ge 8} D_x^{-1} {\mathcal F}^{-1}_{tx} (\langle \sigma_1\rangle \widehat{u_1}) P_{\ge 8} D_x^{-1}w \Bigr) \Bigr\|_{L^{\frac{4}{3}+}_t L^{1+}_x} \\
  & \lesssim  & T^{\frac{1}{4}-}  \| v\|_{X^{0,1}_\varepsilon} \| P_{\ge 8} D_x^{-1}w  \|_{L^\infty_t L^{2+}_x}  \\
  & \lesssim &  T^{\frac{1}{4}-}  \| v \|_{X^{0,1}_\varepsilon} \| w \|_{X^{-3/4,3/4}_\varepsilon}  \\
  & \lesssim &  T^{\frac{1}{4}-}  \| v \|_{X^{0,1}_\varepsilon}( \| w \|_{X^{-1,1}_\varepsilon}+\|w\|_{L^2_{tx}})  \; .
  \end{eqnarray*}
        $\bullet $ $|\sigma|\ge 2^{-5}  |\xi \xi_1 (\xi-\xi_1) | $ and $ \max (|\sigma_1|, \sigma_2|) \le  2^{-5}  |\xi \xi_1 (\xi-\xi_1) |$. \\
     Then we separate two subregions. \\
   1. $ |\xi_1|\wedge |\xi_2|\ge 2^{-7} |\xi| $. Then $ |\xi_1|Ê\sim |\xi_2|\gtrsim |\xi| $ and taking $ \delta>0 $ close enough to $ 0 $ we get
    \begin{eqnarray*}
  I & \lesssim  & \|\partial_x P_{A_\varepsilon}P_{\ge 8}(P_{\ge 8} v P_{\ge 8} w) \|_{X^{0,-1/2+\delta}_\varepsilon } \\
  & \lesssim &  \Bigl\| w\, D_x^{-1/2+3\delta}P_{\ge 8} v  \Bigr\|_{L^2} \\
  & \lesssim &  \|D_x^{-1/2+3\delta}P_{\ge 8} v\|_{L^\infty_{tx}} \|w \|_{L^2_{tx}} \\
  & \lesssim &     \|v\|_{X^{1/4,3/4}_\varepsilon}  \|w \|_{L^2_{tx}} \\
    & \lesssim &    ( \|v\|_{X^{0,1}_\varepsilon} + \|\partial_x v \|_{L^2_{tx}} )  \|w\|_{L^2_{tx}} \; .
  \end{eqnarray*}
2. $  |\xi_1|\wedge |\xi_2| < 2^{-7} |\xi| $.  Then, we use that \eqref{vz}Ê holds on the support of $  \eta_{A_\varepsilon} $. In the subregion  $ |\xi_1|\wedge |\xi_2|=|\xi_1| $ we    write
   \begin{eqnarray*}
  I^2 & \lesssim & \sum_{N\ge 4} \Bigl( \sum_{4\le N_1\le 2^{-5}N}
   \Bigl\| \eta_{N}(\xi) \eta_{A_\varepsilon}(\xi)  |\xi|Ê \chi_{\{|\sigma|\sim  \max(N_1 N^2 ,  \varepsilon N^4 N_1 ) \} }{\mathcal F}_x \Bigl(
   (w P_{N_1} v   )\Bigr) \Bigr\|_{X^{0,-1/2,1}_\varepsilon }\Bigr)^2 \\
     & \lesssim  &  \sum_{N\ge 4} \Bigl( \sum_{4\le N_1\le 2^{-5}N} \| P_{N_1} D_x^{-1/2} v \|_{L^\infty_{tx}} \|\chi_{\{|\xi|\sim N\}}  \,\widehat{w}\|_{L^2_{ \tau,\xi}}\Bigr)^2 \\
& \lesssim  &    \sum_{N\ge 4}   \|\chi_{\{|\xi|\sim N\}}  \,\widehat{w}\|_{L^2_{ \tau,\xi}}^2 \Bigl( \sum_{4\le N_1\le 2^{-5}N} N_1^{-1/4} \| P_{N_1} D_x^{1/4}v \|_{L^\infty_{t}L^2_x}   \Bigr)^2 \\
  & \lesssim &     \|v\|_{X^{1/4,3/4}}^2  \|w \|_{L^2_{tx}}^2 \\
    & \lesssim &    ( \|v\|_{X^{0,1}} + \|\partial_x v \|_{L^2_{tx}} )^2  \|w \|_{L^2_{tx}}^2 \; .
  \end{eqnarray*}
  Finally, in the subregion  $ |\xi_1|\wedge |\xi_2|=|\xi_2| $ we    write
   \begin{eqnarray*}
  I^2 & \lesssim & \sum_{N\ge 4} \Bigl( \sum_{4\le N_2\le 2^{-5}N}
   \Bigl\| \eta_{N}(\xi) \eta_{A_\varepsilon}(\xi)  |\xi|Ê \chi_{\{|\sigma|\sim  \max(N_2 N^2 ,  \varepsilon N^4 N_2 ) \} }{\mathcal F}_x \Bigl(
   (v P_{N_2} w   )\Bigr) \Bigr\|_{X^{0,-1/2,1}_\varepsilon }\Bigr)^2 \\
     & \lesssim  &  \sum_{N\ge 4} \Bigl( \sum_{4\le N_2\le 2^{-5}N} \| P_{N_1} D_x^{-3/2}w \|_{L^\infty_{tx}} \|\chi_{\{|\xi|\sim N\}}  \xi \,\widehat{v}\|_{L^2_{ \tau,\xi}}\Bigr)^2 \\
& \lesssim  &    \sum_{N\ge 4}   \|\chi_{\{|\xi|\sim N\}} \xi \,\widehat{v}\|_{L^2_{ \tau,\xi}}^2 \Bigl( \sum_{4\le N_2\le 2^{-5}N} N_2^{-1/4} \| P_{N_2} D_x^{-3/4} w \|_{L^\infty_{t}L^2_x}   \Bigr)^2 \\
  & \lesssim &     \|w\|_{X^{-3/4,3/4}_\varepsilon}^2  \|\partial_x v \|_{L^2_{tx}}^2 \\
    & \lesssim &    ( \|w\|_{X^{-1,1}_\varepsilon} + \| w \|_{L^2_{tx}} )^2  \|\partial_x v \|_{L^2_{tx}}^2 \; .
  \end{eqnarray*}
  \end{proof}
  Now we are in position to prove the main bilinear estimates :
  \begin{lemma}\label{lemA}
 \begin{eqnarray}
 \| P_{A_\varepsilon} \partial_x (v w ) \|_{X^{0,-1/2,1}_\varepsilon}
  &\lesssim & 
  T^{\frac{1}{4}-}\Bigl(  \|w \|_{Y^0_\varepsilon}+  \|w \|_{X^{-1,1}_\varepsilon}\Bigr)
  \Bigl(  \|v \|_{Y^1_\varepsilon}+  \|v \|_{X^{0,1}_\varepsilon}\
  \label{bilinearA}  \Bigr) \; ,
 \end{eqnarray}
 where the functions $ u $ and $ v$ are supported in time in $ ]-T,T[ $ with $ 0<T\le 1 $.   
  \end{lemma}
  \begin{proof}
First, according to \eqref{jj}-\eqref{eqeq2}  and  to the support of $ \eta_{A_\varepsilon} $ it suffices to consider
$$
I:=\Bigr[\sum_{N\ge 4 }  \Bigl( \sum_{L} L^{-1/2}\Big\| \eta_{L}(\sigma) \eta_{N}(\xi) \int_{\R^2} \sum_{N_1\wedge N_2\ge 8} \widehat{P_{N_1} v}Ê(\xi_1,\tau_1)
 \widehat{P_{N_2} w}(\xi_2,\tau_2) \, d\tau_1 \, d\xi_1 \Bigr\|_{L^2_{\tau,\xi} (|\xi|\not \in J_\varepsilon)}\Bigr)^{2}\Bigr]^{1/2}
\; ,
 $$
 where  
 $J_\varepsilon$ is defined in \eqref{defJ}. We consider  different contributions to $ I$.
    \begin{enumerate}
 \item[1.]    $ N_1\wedge N_2< 2^{-10} (N_1\vee N_2) $. Then  it holds 
 $$
 (1-2^{-7})\xi^2 \le \xi^2 -\xi_1(\xi-\xi_1) \le (1+2^{-7})\xi^2 \
 $$
 and it  is  easy to check that $  \Gamma(\xi,\xi_1) \ge 2^{-5} $ as soon as $ |\xi|\not \in  J_\varepsilon $. According to \eqref{resonance} this ensures that \eqref{strong} holds.
 \item[2.]  $ N_1\wedge N_2\ge 2^{-10} (N_1\vee N_2) $. Then $ N_1\sim N_2 \gtrsim N $.
 \begin{enumerate}
\item[2.1.]  The subregion $|\xi|\not\in  \Bigl[ \sqrt{\frac{17}{80 \varepsilon}}, \sqrt{\frac{2}{5 \varepsilon}} \Bigr] $. In this region, by \eqref{linear2}Ê of Lemma \ref{linear} and duality,
  we get
 \begin{eqnarray*}
 I & \lesssim & \sum_{N_1\wedge N_2 \ge 8, \, N_1\sim N_2}\| D_x^{-\frac{1}{4}+} \partial_x( P_{N_1} v P_{N_2} w) \|_{L^{\frac{4}{3}+}_t L^{1+}_x} \\
 & \lesssim &  \sum_{N_1\wedge N_2 \ge 8, \, N_1\sim N_2} T^{\frac{3}{4}-}
  N_1^{-\frac{1}{4}+}\| \partial_x P_{N_1} v \|_{L^\infty_t L^{2+}_x}  \|    P_{N_2} w \|_{L^\infty_t L^{2}_x}\\
 & \lesssim & T^{\frac{3}{4}-} \|v \|_{L^\infty_t H^1}\|w \|_{L^\infty_t L^2_x}\; .
 \end{eqnarray*}
 \item[2.2.] The subregion $|\xi|\in  \Bigl[ \sqrt{\frac{17}{80 \varepsilon}}, \sqrt{\frac{2}{5 \varepsilon}} \Bigr] $.
 \begin{enumerate}
  \item[2.2.1] The subregion  $ |\xi_1|\wedge |\xi_2| \le \sqrt{\frac{17}{80 \varepsilon}} $. Since both cases can be treated in the same way, 
  we assume $  |\xi_1|\wedge |\xi_2|=|\xi_1| $. Then, according to  \eqref{linear2}  and the support of $ \eta_{A_\varepsilon} $ and  Ê$ \eta_{B_\varepsilon} $, we get
  \begin{eqnarray*}
 I & \lesssim &  \sum_{N_1\wedge N_2 \ge 8, \, N_1\sim N_2}
 T^{\frac{1}{2}-}\|  \partial_x( P_{B_\varepsilon}P_{A_\varepsilon}  P_{N_1} v P_{N_2}w) \|_{L^2_{tx}} \\
 & \lesssim &  T^{\frac{1}{2}-} \sum_{N_1\wedge N_2 \ge 8, \, N_1\sim N_2} 
\| P_{B_\varepsilon}P_{A_\varepsilon}   \partial_x P_{N_1} v \|_{L^4_t L^\infty_x}  \|   P_{N_2} w \|_{L^\infty_t L^{2}_x}\\
  & \lesssim &   T^{\frac{1}{2}-}\sum_{N_1\wedge N_2 \ge 8, \, N_1\sim N_2} N_1^{-1/4}\|P_{A_\varepsilon}   P_{N_1} v \|_{X^{1,1/2,1 }_\varepsilon}  \|  P_{N_2} w \|_{L^\infty_t L^{2}_x}\\
 & \lesssim & T^{\frac{1}{2}-} \|P_{A_\varepsilon} v \|_{X^{1,1/2,1 }_\varepsilon}\|w \|_{L^\infty_t L^2_x}\; .
 \end{eqnarray*}
\item[2.2.2] The subregion $ |\xi_1|\wedge |\xi_2| > \sqrt{\frac{17}{80 \varepsilon}} $.  Then as in the proof of  \eqref{bilinear} in Section \ref{section3} we observe that \eqref{strong} holds.
 \end{enumerate}
 \end{enumerate}
 \end{enumerate}
 \end{proof} 
 To complete the proof of Proposition \ref{proA1} we notice that, similarly to Lemma \ref{oo}, one can easily prove that any solution $ w\in C([0,T];L^2(\R)) $ with $ 0<T<1 $ of 
 \eqref{ww} satisfies
   \begin{equation}\label{zeA}
  \|w\|_{X^{-1,1}_{\varepsilon,T}} \lesssim  \|w\|_{L^\infty_T H^{-1}_x} + \|v \|_{L^\infty_T H^1_x}
    \|w \|_{L^\infty_T L^2_x}  \; .
  \end{equation}
 Finally, with \eqref{bilinearA}  and \eqref{zeA} in hand, Proposition \ref{proA1} follows from the classical linear estimates in Bourgain's spaces.
 \end{proof}
 Now the second step consists in proving the following estimate :
  \begin{proposition}\label{propo2A}
Let $ 0<\varepsilon<1 $,    $w \in C([0,T]; H^1(\R)) $  a  solution to \eqref{ww} with initial data $ \varphi $ and $ v\in Y^{1}_{\varepsilon,T}$. Then it holds
 \begin{equation}\label{est1propo2A}
 \|P_{\complement A_\varepsilon} w \|_{L^\infty_T L^2_x}^2 \lesssim \|P_{\complement A_\varepsilon} \varphi\|_{L^2}^2 +
 (\varepsilon^{1/2}+T^{1/4})
  \|w \|_{Y_{\varepsilon,T}^0}^2 \Bigl( \|v \|_{Y_{\varepsilon,T}^1} +  \|v \|_{Y_{\varepsilon,T}^1}^2\Bigr)
 \end{equation}
 where the implicit constant is independent of $ \varepsilon $.
 \end{proposition}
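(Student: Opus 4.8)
The plan is to reproduce, for the linear term $\tfrac12\partial_x(wv)$ in the role of $\partial_x(u^2)$, the argument that proves Proposition \ref{propo2}, keeping in mind that through $Y^0_{\varepsilon,T}$ the function $w$ only carries $L^2_x$ information, while $v$ carries $H^1_x$ information through $Y^1_{\varepsilon,T}$. The three ingredients are the same as in Section \ref{section3}: the linear non‑homogeneous estimate of Lemma \ref{carlos}, already at our disposal; the uniform Strichartz estimate \eqref{linear2} (together with \eqref{linear3} for output frequencies of order one); and a linearized analogue of the energy estimate of Lemma \ref{lemma32}.

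\textbf{Step 1: a linearized energy estimate.} Applying $P_{\complement A_\varepsilon}$ to \eqref{ww} and taking the $L^2_x$ scalar product with $P_{\complement A_\varepsilon}w$ gives
\begin{equation*}
\frac{d}{dt}\|P_{\complement A_\varepsilon}w(t)\|_{L^2_x}^2=-\int_{\R} P_{\complement A_\varepsilon}\partial_x(wv)\,P_{\complement A_\varepsilon}w\,dx .
\end{equation*}
As in the proof of Lemma \ref{lemma32} I would split $v=P_{B_\varepsilon}v+P_{\complement B_\varepsilon}v$ and $w=P_{B_\varepsilon}w+P_{\complement B_\varepsilon}w$. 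In the genuinely bilinear part $P_{B_\varepsilon}w\,P_{B_\varepsilon}v$ one integrates by parts and uses the commutator estimate \eqref{com} with $s=0$, the support identity \eqref{supo}, and the fact that the symbol of $P_{\complement A_\varepsilon}$ has Lipschitz constant $O(\sqrt\varepsilon)$; this lets every derivative be thrown onto $v$, so that this part is controlled by $\|P_{B_\varepsilon}\partial_x v\|_{L^\infty_x}\|w\|_{L^2_x}^2$. Every remaining term carries a factor $P_{\complement B_\varepsilon}$ and is therefore supported in $|\xi|\sim\varepsilon^{-1/2}$; there Bernstein's inequality trades $\partial_x$ for a factor $\varepsilon^{-1/2}$, while $\|P_{\complement B_\varepsilon}v\|_{L^2_x}\lesssim\sqrt\varepsilon\,\|v\|_{H^1_x}$ (and $\|P_{\complement A_\varepsilon}v\|_{L^2_x}\lesssim\sqrt\varepsilon\,\|v\|_{H^1_x}$) absorbs this factor, so that again no derivative lands on $w$. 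Integrating in time and using \eqref{supo} once more, I expect to reach
\begin{equation*}
\|P_{\complement A_\varepsilon}w\|_{L^\infty_TL^2_x}^2\lesssim\|P_{\complement A_\varepsilon}\varphi\|_{L^2_x}^2+\|P_{B_\varepsilon}\partial_x v\|_{L^1_TL^\infty_x}\,\|w\|_{L^\infty_TL^2_x}^2+\|P_{B_\varepsilon}w\|_{L^1_TL^\infty_x}\,\|v\|_{L^\infty_TH^1_x}\,\|w\|_{L^\infty_TL^2_x}.
\end{equation*}

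\textbf{Step 2: controlling the two $L^1_TL^\infty_x$ norms.} The first, $\|P_{B_\varepsilon}\partial_x v\|_{L^1_TL^\infty_x}$, is precisely the quantity bounded by hypothesis \eqref{hypv2}. For $\|P_{B_\varepsilon}w\|_{L^1_TL^\infty_x}$ I would split $P_{B_\varepsilon}w=P_{B_\varepsilon}P_{A_\varepsilon}w+P_{\complement A_\varepsilon}w$: on the first piece, after removing the output frequencies of order one with \eqref{linear3}, the uniform Strichartz estimate \eqref{linear2} applied through the Duhamel formula for \eqref{ww} yields $\|P_{B_\varepsilon}P_{A_\varepsilon}w\|_{L^1_TL^\infty_x}\lesssim T^{3/4-}\|w\|_{Y^0_{\varepsilon,T}}$; on the second, Lemma \ref{carlos} applied to \eqref{ww} itself (with $F=\tfrac12 wv$, so $\|P_{\complement A_\varepsilon}F\|_{L^1_TL^2_x}\lesssim T\|v\|_{L^\infty_TH^1_x}\|w\|_{L^\infty_TL^2_x}$ and $\|P_{\complement A_\varepsilon}w\|_{L^\infty_TL^2_x}\le\|w\|_{Y^0_{\varepsilon,T}}$) gives $\|P_{\complement A_\varepsilon}w\|_{L^1_TL^\infty_x}\lesssim(\varepsilon^{1/2}+T)\bigl(1+\|v\|_{Y^1_{\varepsilon,T}}\bigr)\|w\|_{Y^0_{\varepsilon,T}}$. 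Inserting these into Step 1, using $T<1$ and $\|v\|_{L^\infty_TH^1_x}\le\|v\|_{Y^1_{\varepsilon,T}}$, $\|w\|_{L^\infty_TL^2_x}\le\|w\|_{Y^0_{\varepsilon,T}}$, the common prefactor $(\varepsilon^{1/2}+T^{1/4})\bigl(\|v\|_{Y^1_{\varepsilon,T}}+\|v\|_{Y^1_{\varepsilon,T}}^2\bigr)$ is collected in front of $\|w\|_{Y^0_{\varepsilon,T}}^2$, which is \eqref{est1propo2A}.

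\textbf{Main obstacle.} The delicate point is Step 1. Contrary to Lemma \ref{lemma32}, where $u$ appeared at one and the same $H^1_x$ regularity in every slot, here $v$ lives at $H^1_x$ while $w$ must be kept at $L^2_x$; so in the energy identity one can never afford an $H^1_x$ norm of $w$, and each $\partial_x$ has to be paid for by the $\sqrt\varepsilon$ smallness of the commutator with $P_{\complement A_\varepsilon}$, or by the $\sqrt\varepsilon$ gain coming from a $P_{\complement B_\varepsilon}$- or $P_{\complement A_\varepsilon}$-localized factor of $v$ being measured in $L^2_x$ rather than $H^1_x$, or by the Strichartz/Bourgain control of $P_{A_\varepsilon}w$ encoded in $Y^0_{\varepsilon,T}$ and Lemma \ref{carlos} near the stationary frequency $\sqrt{3/(5\varepsilon)}$. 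Checking that the frequency bookkeeping of Sections \ref{section2}–\ref{section3} can be carried out so that the structural hypotheses \eqref{hypv1}–\eqref{hypv2} on $v$ (and nothing on $w$) suffice is the real content of the argument.
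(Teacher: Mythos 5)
Your proposal is correct and follows essentially the same route as the paper: the linearized energy identity with the Koch--Tzvetkov commutator estimate to keep $w$ in $L^2_x$, Bernstein's inequality on the pieces localized at frequency $\sim\varepsilon^{-1/2}$, and Lemma \ref{carlos} applied to \eqref{ww} together with hypothesis \eqref{hypv2} to close the $L^1_TL^\infty_x$ norms. The only (harmless) divergence is in Step 1: the paper splits only $v=P_{B_\varepsilon}v+P_{\complement B_\varepsilon}v$, so that the sole $L^\infty_x$ norm of $w$ that appears is $\|P_{\complement A_\varepsilon}w\|_{L^1_TL^\infty_x}$, handled by Lemma \ref{carlos} alone; your additional splitting of $w$ produces the extra quantity $\|P_{B_\varepsilon}w\|_{L^1_TL^\infty_x}$ (and, from the cross terms, really $\|P_{\complement A_\varepsilon}w\|_{L^1_TL^\infty_x}$ again rather than $\|P_{B_\varepsilon}w\|_{L^1_TL^\infty_x}$), which your Step 2 nevertheless controls by the same two tools, so the argument still closes with the stated prefactor.
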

 \begin{proof}
 Applying the operator $ P_{\complement A_\varepsilon}  $ on \eqref{ww} 
 and taking the $ L^2_x $-scalar product  with  $  P_{\complement A_\varepsilon}  w $ we get
\begin{eqnarray*}
\frac{d}{dt} \|P_{\complement A_\varepsilon} w(t)\|_{L^2_x}^2  & = &  \int_{\R}  P_{\complement A_\varepsilon} \partial_x (v w)  P_{\complement A_\varepsilon} w \\
&= &   \int_{\R}  P_{\complement A_\varepsilon} \partial_x ( w P_{B_\varepsilon} v)  P_{\complement A_\varepsilon} w
+  \int_{\R}  P_{\complement A_\varepsilon} \partial_x ( w P_{\complement B_\varepsilon} v)  P_{\complement A_\varepsilon} w\\
& = & I_1+I_2 \;.
\end{eqnarray*}
Using the following commutator estimate (see for instance \cite{KT1}) 
$$
\| [P_{\complement A_\varepsilon} \partial_x, f]  g \|_{L^2_x} \lesssim \| \partial_x f\|_{L^\infty_x} \| g\|_{L^2_x} \; ,
$$
 and integrating by parts, we get 
\begin{eqnarray*}
I_1 &= &   \int_{\R}   P_{B_\varepsilon} v P_{\complement A_\varepsilon}  w_x P_{\complement A_\varepsilon} w
+    \int_{\R}  \Bigl( [P_{\complement A_\varepsilon}\partial_x,   P_{B_\varepsilon} v]    w \Bigr) P_{\complement A_\varepsilon} w\\
& \lesssim  & \|  \partial_x P_{B_\varepsilon} v\|_{L^\infty} \|w\|_{L^2_x}^2 \; .
\end{eqnarray*}
By the frequency projections, we easily control $ I_2 $ by 
\begin{eqnarray*}
I_2 & \lesssim &  \varepsilon^{-1/2}Ê \Bigl\|  P_{\complement A_\varepsilon} (
w P_{\complement B_\varepsilon} v\Bigr) \Bigr\|_{L^1_x}    \| P_{\complement A_\varepsilon}  w \|_{L^\infty_x} \\
& \lesssim & \| P_{\complement A_\varepsilon}  w\|_{L^\infty_x} \|w\|_{L^2_x}Ê  \| v\|_{H^1_x }Ê\; .
\end{eqnarray*}
Gathering the above estimates we infer that 
$$
\frac{d}{dt} \|P_{\complement A_\varepsilon} w(t)\|_{L^2_x}^2 \lesssim \Bigl((\|w(t)\|_{L^2_x}+\| P_{\complement A_\varepsilon}  w(t)\|_{L^\infty_x})(
\|  \partial_x P_{B_\varepsilon} v(t)\|_{L^\infty_x}+  \| v(t)\|_{H^1 })Ê\Bigr)\|w(t)\|_{L^2_x} \;  .
$$
On the other hand, applying Lemma \ref{carlos} on \eqref{ww} we get 
$$
\| P_{\complement A_\varepsilon} w\|_{L^{1}_T L^\infty_{x}} \lesssim (\varepsilon^{1/2}+T)
\|P_{\complement A_\varepsilon} \,w\|_{L^\infty_T\, L^2_{x}}
+ T \|v\|_{L^\infty_{T} H^1_{x}}   \|w\|_{L^\infty_T L^2_x} \quad .
$$
Therefore, integrating in time the next to the last inequality with \eqref{hypv2} in hand, leads to \eqref{est1propo2A}
 \end{proof}
 Combining Propositions \ref{proA1} and \ref{propo2A} we infer that 
 $$
\|w\|_{Y_{\varepsilon,T}^0}^2\le C\,  \| \varphi\|_{L^2}^2 + C\, (\sqrt{\varepsilon}+T^{\frac{1}{4}})\| w\|_{Y_{\varepsilon,T}^0}^2\| v\|_{Y_{\varepsilon,T}^1}
 \Bigl( 1+\| v\|_{Y_{\varepsilon,T}^1}^3 \Bigr)\; .
$$
which yieds the desired result according to \eqref{hypv1}
\bibliographystyle{amsplain}

\end{document}